\documentclass[12pt]{amsart}
\usepackage{graphicx,amssymb,amsfonts,amsmath,amsthm,newlfont}
\usepackage{color}
\usepackage{epsfig,url}
\usepackage{numprint}
\usepackage{axodraw4j}
\usepackage{hyperref}
\usepackage{tikz,xcolor}
\usetikzlibrary{decorations.pathreplacing,decorations.markings,decorations.pathmorphing, calc}

\tikzset{
  on each segment/.style={
    decorate,
    decoration={
      show path construction,
      moveto code={},
      lineto code={
        \path [#1]
        (\tikzinputsegmentfirst) -- (\tikzinputsegmentlast);
      },
      curveto code={
        \path [#1] (\tikzinputsegmentfirst)
        .. controls
        (\tikzinputsegmentsupporta) and (\tikzinputsegmentsupportb)
        ..
        (\tikzinputsegmentlast);
      },
      closepath code={
        \path [#1]
        (\tikzinputsegmentfirst) -- (\tikzinputsegmentlast);
      },
    },
  },
  mid arrow/.style={postaction={decorate,decoration={
        markings,
        mark=at position .5 with {\arrow[#1]{stealth}}
      }}},
}

\newcommand{\CC}{{\mathbb C}}

\newcommand{\PP}{{\mathbb P}}
\newcommand{\QQ}{{\mathbb Q}}
\newcommand{\RR}{{\mathbb R}}

\newcommand{\ZZ}{{\mathbb Z}}

\newcommand{\tr}{\mathrm{tr}\,}

\newcommand{\res}{\mathrm{res}}

\theoremstyle{plain}
\newtheorem{thm}{Theorem}
\newtheorem{lem}[thm]{Lemma}

\newtheorem{quest}[thm]{Question}

\newtheorem{remark}[thm]{Remark}
\newtheorem{defn}[thm]{Definition}

\newtheorem{ex}[thm]{Example}

\newcommand{\LM}{L\mathcal{M}}

\newcommand{\GL}{\mathrm{GL}}
\newcommand{\trop}{\mathrm{trop}}

\newcommand{\To}{\longrightarrow}
\newcommand{\GC}{\mathcal{GC}}

\newcommand{\q}{/\!\!/}

\newcommand{\0}{\color{blue}{\mathsf{0}}}

\title{Non-linear geometry of multiple zeta values}

\author{Francis Brown}
\address{Francis Brown\\
All Souls College, Oxford, OX1 4AL, United Kingdom}
\email{francis.brown@all-souls.ox.ac.uk}

\begin{document}

\begin{abstract}   Since their rediscovery in the 1990s, multiple zeta values have become ubiquitous in many areas  of mathematics and physics. 
Their standard integral and sum representations can usually be traced  back to a single source, namely the iterated integrals on the  Riemann sphere with three punctures.  We refer to such representations as the \emph{linear}  geometry of multiple zeta values, since the denominators of the corresponding  integrands  factor completely  into linear terms.

However, there also exist equally important and entirely  distinct integral representations  for multiple zeta values  arising in  mathematics and physics,  in which  matrix determinants appear in the denominator of the integrand.  We call this the \emph{non-linear} geometry of multiple zeta values. 
These lectures trace the origins of this non-linear geometry and provide an introductory journey through a range of topics including tropical geometry, the moduli spaces of tropical curves, Feynman integrals  in quantum field theory, the general linear group of integer matrices,  and the reduction theory of quadratic forms. In doing so,
we  propose a  geometric framework for multiple zeta values based on such non-linear, determinantal representations  and set out a number of open questions for future research.

\end{abstract}

\maketitle

\section{Introduction}

There are essentially two, fundamentally different,  types of integral representations for multiple zeta values which frequently occur in mathematics and theoretical physics.   

\subsection{Linear geometry}
Classical integral representations of multiple zeta values involve \emph{linear} functions of the integration variables. 
Examples  arise in the theory of knot invariants and associators, in conformal field theory (via the holonomy of the Knizhnik-Zamolodchikov equation), in the study of the moduli spaces $\mathcal{M}_{0,n}$ of Riemann spheres with $n$ marked points, in deformation quantisation, as periods of hyperplane configurations and as iterated integrals of unipotent fundamental groups.    By now the range of subjects in which multiple zeta values appear is huge and the reader is invited to consult the comprehensive list \cite{HoffmanHomePage}. 
To illustrate what we mean by linear geometry,  consider the following  two  integrals.
\begin{eqnarray} 
I_1 & =& \int_{0\leq t_1 \leq t_2 \leq t_3 \leq 1}  \frac{dt_1}{1-t_1 }\frac{dt_2}{t_2} \frac{dt_3}{t_3-t_1}  \\
I_2 & = &  \int_{0\leq t_1 \leq t_2 \leq t_3 \leq 1}  \frac{dt_1}{1-t_1 }\frac{dt_2}{t_2} \frac{dt_3}{t_3}  = \sum_{n\geq 1}  \frac{1}{n^3} = \zeta(3)  \ .   
\end{eqnarray}
Both integrals have denominators  defined by linear forms in the $t_i$, and their domains of integration are defined by  \emph{linear inequalities}. 
Indeed, the integral $I_1$ is a period  of the moduli space $\mathcal{M}_{0,6}$ of 6 marked points on the Riemann sphere, labelled $0,t_1,t_2,t_3,1,\infty$. It is isomorphic to  a complement of  linear spaces 
\[ \{ t_i\neq 0, t_i\neq 1, \hbox{ for } 1\leq i \leq 3 \ , \       t_1 \neq t_2, t_1 \neq t_3, t_2\neq t_3\}   \]
in affine $3$ space with coordinates   $(t_1,t_2,t_3)$.  It is not \emph{a priori} an iterated integral but can be reduced to one. 
The integral $I_2$, however,  is  by definition an iterated integral on the projective line minus 3 points and a period of the unipotent fundamental groupoid $\pi_1^{\mathrm{un}}(\PP^1 \setminus \{0,1,\infty\})$ with respect to certain tangential basepoints at $0$ and $1$. 
One may prove  that $I_1 = 2I_2$. 
Most of the relations between multiple zeta values in the literature arise as equalities of linear integrals such as the one  above. 

The linear theory of multiple zeta values is well-developed. Indeed,  there is a very extensive literature studying variants of MZV's which are defined as iterated sums, which can  be converted into similar integrals of linear type.  It turns out in practice that  \emph{all the integral representations}  of multiple zeta values  of linear type mentioned above (such as $I_1$) may be reduced to iterated integrals on  the single space $\PP^1 \setminus \{0,1,\infty\}$ such as $I_2$.\footnote{For example, the  periods of $\mathcal{M}_{0,n}$ may be computed  using Panzer's hyperint \cite{Panzer}, which in particular implements the algorithm in \cite{BrENS}}

Note that, in the  `linear' setting, the weight of a multiple zeta value  is typically equal to the  number of integrations in its  integral representation. 

\subsection{Non-linear geometry}There exist  completely different types of integral representations for multiple zeta values 
which involve   non-linear, and highly singular, geometries. These are much less well-known, but examples include:
 Feynman integrals in Quantum Field Theory; canonical integrals on moduli spaces of tropical curves (metric graphs), 
 the volumes of locally symmetric spaces of quadratic forms, going back to Minkowski; and  regulators in algebraic $K$-theory. 
The last two examples are   classical and predate the modern theory of MZV's by a long time, but  only concern the single Riemann zeta values $\zeta(n)$. \footnote{One goal of these lectures is to suggest that, in fact, all multiple zeta values admit a similar non-linear geometric interpretation.  Furthermore, by the work of Siegel, the  two examples $(3)$, $(4)$ also have versions  for Dedekind zeta values of arbitrary number fields,  which therefore points to a geometric  theory of analogues of  multiple zeta values associated to number fields. }
The third  interpretation, as volumes of locally symmetric spaces, is closely related to the adelic formulae for zeta values which arise in the study of Tamagawa numbers.
In these lectures, we wish   to explain why these  examples all relate to a common underlying geometry associated to  the general linear group of integer matrices $\GL_n(\ZZ)$. 

\begin{ex}  \label{exampleintrozeta3nonlinear} Consider the following non-linear representation for $ 6\, \zeta(3)$ as a 5-dimensional integral which we shall encounter in various  contexts mentioned above:
\[  6 \zeta (3)  = \int^{\infty}_{x_1,\ldots, x_5=0} \frac{dx_1 \ldots dx_5}{\det(X)^2} \] 
where  
\[   X = \begin{pmatrix}
 x_1+x_2+ 1  & -x_1  & - x_2  \\ 
 -x_1 & x_1+x_3+x_5 & -x_3 \\
 -x_2  & -x_3  & x_2+x_3+x_4  
\end{pmatrix}
\]
The  vanishing locus $\det(X)=0$ defines a singular hypersurface in affine 5-space. 
The polynomial $\det(X)$ is irreducible of degree three, hence the term `non-linear'  
(although such polynomials are typically  of degree at most 1 in every variable $x_i$  individually).  
\end{ex}
 
In fact,  the `non-linear'  representations for MZV's which we shall consider will be convergent integrals  of the general  form 
\begin{equation}  \label{introIasdet} I =  \int_{\sigma}  \frac{N(x_1,\ldots, x_n) } { \det (X)^d } dx_1 \ldots dx_n
\end{equation} 
where $\sigma$ is defined by linear inequalities in the  variables $x_1,\ldots, x_n$,  $X$ is a matrix whose entries are polynomials (typically linear forms) in the $x_i$, and $N \in \QQ[x_1,\ldots, x_n]$ is a polynomial.
It is non-linear precisely when $\det(X)$ defines an irreducible hypersurface. 
This formulation encompasses all the non-linear representations considered in these notes.
   By contrast with the linear representations considered earlier, the number of integration variables will usually  be larger  than the weight.

The guiding perspective of these notes is that multiple zeta values admit a unified geometric interpretation as periods of a “determinantal geometry”,  governed by canonical differential forms naturally associated to families of matrices. This perspective encompasses the non-linear integral representations arising in quantum field theory, tropical geometry, and the theory of arithmetic groups. A key role will be played by certain canonical differential forms, whose associated integrals are well-defined and finite, and which provide a bridge between these different settings. The aim of these lectures is to highlight the underlying geometric unity between them.

\subsection{Contents} 
The `linear' theory of multiple zeta values is well-developed and the corresponding geometric and motivic theory (namely, the theory of the motivic fundamental groupoid of the projective line minus three points) is well-understood. My aim in these lectures is to explain a parallel `non-linear' theory. 
The notes  are divided into four  sections,  corresponding roughly to each lecture at the workshop. 
\begin{enumerate} 
\item Introduction to Feynman integrals and MZV's 
\item Tropical curves and their moduli 
\item  The graph complex and Grothendieck-Teichm\"uller group 
\item  Canonical integrals and $\GL_n(\ZZ)$. 
\end{enumerate}
The structure of these notes is as follows. We first contrast the classical linear theory of multiple zeta values with a class of non-linear integral representations arising from determinants. We then show that Feynman integrals provide a natural source of such representations, and reinterpret their domains of integration in terms of moduli spaces of tropical curves. Next, we relate these spaces to the graph complex and the Grothendieck–Teichmüller Lie algebra. Finally, we introduce canonical differential forms on spaces of matrices, which yield well-defined period integrals and relate  the entire picture to the cohomology of $\mathrm{GL}_n(\mathbb{Z})$ and algebraic $K$-theory.

Some basic familiarity with MZV's will be assumed. 
Since these are lecture notes, I have chosen to focus on simple examples,  motivating ideas, and  connections between different fields of research. It was not my intention to provide  a comprehensive survey of any particular part of mathematics, nor to  review the  literature. 
When writing up these notes I decided to  include some extra topics which are marked with a single star $*$. More advanced topics are  marked with two stars, and can be omitted.
I hope that these lectures will motivate the reader to explore the fascinating and largely unexplored non-linear geometry of multiple zeta values.
I have included a  number of open problems and conjectures which suggest, amongst other things, that multiple zeta values, and by extension, mixed Tate motives over the integers,   arise in some completely unexpected contexts. These  include some very classical  objects in mathematics, notably  the moduli spaces of curves, abelian varieties, and the general linear group of integer matrices. 
\\

\emph{Acknowledgements}.   
Some of the material in these notes was based on lectures given at the ICMU, Kyiv, in November and December 2024, as well as in Fukuoka in February 2025 at Kyushu  University. I am very grateful to both institutions for their generous hospitality and for the opportunity  to deliver these  lectures. 
This project has received funding from the European Research Council (ERC) under the European Union’s Horizon Europe programme (grant agreement No. 101167287).  For the purpose of Open Access, the  author has applied a CC BY public copyright licence to any Author Accepted Manuscript (AAM) version arising from this submission.

\begin{remark} (Iterated modular integrals) The reader may wonder, 
 in view of Zudilin's lectures on multiple modular values in this volume,  how 
 iterated integrals of Eisenstein series for the modular group $\mathrm{SL}_2(\ZZ)$ of level 1  tie in with the previous discussion. 
Indeed, Saad's theorem \cite{Saad} expresses all multiple zeta values as a linear combination of iterated integrals of Eisenstein series.  In particular, every MZV can be written  as a Lambert series, generalising a famous formula for $\zeta(3)$ due to Ramanujan.
The proof in \cite{Saad} shows that the multiple modular representations of MZV's are, by a clever change of variables, equivalent to iterated integrals on the projective line minus three points and hence are  essentially `linear' representations of MZV's, but  in   a somewhat disguised form. 
\end{remark}

\begin{remark}Iterated integrals on $\PP^1 \backslash \{0,1,\infty\}$, and more generally, period integrals on $\mathcal{M}_{0,n}$,  may also be written as an integral of the  kind \eqref{introIasdet},  involving a degenerate determinant. Consider the Vandermonde-type  matrix 
\[   V =    \begin{pmatrix} 
t_1 & t_1^2 & \ldots  & t_1^{n+1} \\
\vdots & \vdots & \ddots & \vdots \\
t_n & t_n^2 & \ldots  & t_n^{n+1} \\
 1 & 1 & \ldots & 1 \\
\end{pmatrix} 
 \]
whose determinant factorises as a product of linear factors \[\det(V) =   \prod_{i=1}^n  t_i(1-t_i) \prod_{1\leq i < j \leq n} (t_j -t_i) \ . \]
Then the  periods of the moduli space $\mathcal{M}_{0,n+3}$ referred to above (and in particular, iterated integrals on the projective line minus three points) are all  of the form 
\[ I= \int_{\sigma}  \frac{N(t_1,\ldots, t_n) }{ \det(V)^d} dt_1\ldots dt_n  \] 
where the boundary of the domain of integration $\sigma$ is defined by  linear inequalities, and $N$ is a polynomial in $t_i$ with rational coefficients.   From this perspective, the linear theory of multiple zeta values  is a degenerate version of the non-linear theory. In particular, formula \eqref{introIasdet} subsumes both. 
\end{remark}

From this perspective, the distinction between `linear' and `non-linear' geometry is not absolute: the linear theory may be viewed as a degenerate case of the determinantal framework in which the relevant determinant factors completely into linear terms. In particular, the essential difference between the two settings lies not in the presence of determinants per se, but in whether the corresponding hypersurface decomposes into a union of hyperplanes or remains irreducible.

A natural question, which will reappear throughout these notes, is whether there exists a direct geometric  relationship between these two types of representations.

\section{Feynman integrals and MZV's}  \label{sect:FeynmanMZVs}

There are many comprehensive accounts of Feynman integrals in the literature. The point of view taken in \cite{Panzer, Schnetz} is particularly close to the one presented here. In my lecture I gave a self-contained and purely mathematical definition of graph polynomials and Feynman residues, which are an important  class of periods  of practical use in high-energy physics. See \S\ref{sect:physics} for additional motivation from physics.

\subsection{Graph polynomials}  \label{sect: GraphPolys}
Let $G$ be a finite, connected graph whose set of edges we denote by  $E_G$, and whose set of vertices by $V_G$. To each edge $e\in E_G$  we associate a variable  $x_e$ and define  the \emph{graph polynomial}
\[ \Psi_G \in \ZZ [x_e, e\in E_G]\]
to be the following sum over monomials:
\[ \Psi_G = \sum_{\substack{T\subset G \\ \hbox{T spanning tree} }} \prod_{e\notin T } x_e \ . \]
It was introduced by Kirchhoff in the mid 19th-century for his  study of electrical circuits. 
The  sum ranges over the set of spanning trees of $G$, which are defined to be its subgraphs which are trees, i.e., which  are connected and simply-connected (have no closed cycles), and which span the set of vertices of $G$, i.e., which satisfy $V_T =V_G$. 

\begin{ex} \label{ex: sunrise} Consider the sunrise graph with two vertices and three edges (left).

\begin{center}
\begin{tikzpicture}
% Draw a black circle
  \draw[black, thick] (0,0) circle (1cm);
  % Draw the horizontal diameter
  \draw[black, thick] (-1,0) -- (1,0);
  \filldraw[black] (0,0)  circle (0pt) node[anchor=south]{\small $2$};
    \filldraw[black] (0,1)  circle (0pt) node[anchor=south]{\small $1$};
      \filldraw[black] (0,-1)  circle (0pt) node[anchor=south]{\small $3$};
        \filldraw[black] (-1.5,-1)  circle (0pt) node[anchor=south]{$G$};
 % Draw the black upper semi-circle
\fill[black] (-1,0) circle (2pt);
\fill[black] (1,0) circle (2pt);
 
\draw[black, thick] (6,0) arc[start angle=0, end angle=180, radius=1cm];
% Draw the small filled black circles at endpoints
\fill[black] (6,0) circle (2pt);
\fill[black] (4,0) circle (2pt);
 \filldraw[black] (5,1)  circle (0pt) node[anchor=south]{\small $1$};

\draw[black, thick] (6,0) arc[start angle=0, end angle=180, radius=1cm];
% Draw the small filled black circles at endpoints
\fill[black] (6,0) circle (2pt);
\fill[black] (4,0) circle (2pt);
 \filldraw[black] (8,0)  circle (0pt) node[anchor=south]{\small $2$};
  \draw[black, thick] (7,0) -- (9,0);
\fill[black] (7,0) circle (2pt);
\fill[black] (9,0) circle (2pt);
   \filldraw[black] (11,-1)  circle (0pt) node[anchor=south]{\small $3$};
 \draw[black, thick] (12,0) arc[start angle=0, end angle=-180, radius=1cm];
\fill[black] (10,0) circle (2pt);
\fill[black] (12,0) circle (2pt);
 
\end{tikzpicture}
\end{center} 

\vspace{0.1in}
\noindent 
It has three spanning trees, shown on the right. The  complementary set of edges for each spanning tree are $\{2,3\}$, $\{1,3\}$ and $\{1,2\}$. The graph polynomial is therefore
\[ \Psi_G = x_1x_2 +x_1x_3+x_2x_3\] 

\end{ex}

\begin{ex}  \label{ex: dunce} Consider the following graph with 3 vertices and 4 edges. 
It has the five spanning trees depicted on the right, whose 
complementary sets of edges  are, from left to right: $\{3,4\}$, $\{2,4\}$, $\{1,4\}$, $\{2,3\}$,  $\{1,3\}$.  
 Its graph polynomial is:
\[ \Psi_G = x_3 x_4 + x_2 x_4+x_1x_4+x_2x_3  + x_1 x_3\]

\begin{figure}[h!] 
\begin{tikzpicture}
\filldraw[black] (-2,0.5)  circle (0pt) node[anchor=south]{\small $1$};
\filldraw[black] (-2,-0.5)  circle (0pt) node[anchor=south]{\small $2$};
\filldraw[black] (-0.9,0)  circle (0pt) node[anchor=east]{\small $3$};
\filldraw[black] (-0.7,0)  circle (0pt) node[anchor=west]{\small $4$};
  \fill[black] (-1,1) circle(2pt);
  \fill[black] (-1,-1) circle(2pt);
   \fill[black] (-3,0) circle(2pt);
  \draw[black,thick] (-1,1) .. controls (-0.6,0.4) and (-0.6, -0.4) .. (-1,-1);
  \draw[black,thick] (-1,1) .. controls (-1.5,0.4) and (-1.5, -0.4)  .. (-1,-1);
  \draw[black,thick] (-3,0) -- (-1,-1);
   \draw[black,thick] (-3,0) -- (-1,1);
\fill[black] (3,0.5) circle(1pt);
\fill[black] (3,-0.5) circle(1pt);
\fill[black] (2,0) circle(1pt);
%\draw[black,thick] (3,0.5) .. controls (3.2,0.2) and (3.2,-0.2) .. (3,-0.5);
%\draw[black,thick] (3,0.5) .. controls (2.75,0.2) and (2.75,-0.2) .. (3,-0.5);
\draw[black,thick] (2,0) -- (3,-0.5);
\draw[black,thick] (2,0) -- (3,0.5);

\fill[black] (5,0.5) circle(1pt);
\fill[black] (5,-0.5) circle(1pt);
\fill[black] (4,0) circle(1pt);
%\draw[black,thick] (5,0.5) .. controls (5.2,0.2) and (5.2,-0.2) .. (5,-0.5);
\draw[black,thick] (5,0.5) .. controls (4.75,0.2) and (4.75,-0.2) .. (5,-0.5);
%\draw[black,thick] (4,0) -- (5,-0.5);
\draw[black,thick] (4,0) -- (5,0.5);
% Duplicate and move 2 cm to the right
  \fill[black] (7,0.5) circle(1pt);
  \fill[black] (7,-0.5) circle(1pt);
  \fill[black] (6,0) circle(1pt);
 % \draw[black,thick] (7,0.5) .. controls (7.2,0.2) and (7.2,-0.2) .. (7,-0.5);
  \draw[black,thick] (7,0.5) .. controls (6.75,0.2) and (6.75,-0.2) .. (7,-0.5);
  \draw[black,thick] (6,0) -- (7,-0.5);
%  \draw[black,thick] (6,0) -- (7,0.5);
  % Duplicated and moved 2cm to the right
  % Original Figure
  \fill[black] (9,0.5) circle(1pt);
  \fill[black] (9,-0.5) circle(1pt);
  \fill[black] (8,0) circle(1pt);
  \draw[black,thick] (9,0.5) .. controls (9.2,0.2) and (9.2,-0.2) .. (9,-0.5);
%  \draw[black,thick] (9,0.5) .. controls (8.75,0.2) and (8.75,-0.2) .. (9,-0.5);
%  \draw[black,thick] (8,0) -- (9,-0.5);
  \draw[black,thick] (8,0) -- (9,0.5);

  % Duplicated Figure
  \fill[black] (11,0.5) circle(1pt);
  \fill[black] (11,-0.5) circle(1pt);
  \fill[black] (10,0) circle(1pt);
  \draw[black,thick] (11,0.5) .. controls (11.2,0.2) and (11.2,-0.2) .. (11,-0.5);
%  \draw[black,thick] (11,0.5) .. controls (10.75,0.2) and (10.75,-0.2) .. (11,-0.5);
  \draw[black,thick] (10,0) -- (11,-0.5);
%  \draw[black,thick] (10,0) -- (11,0.5);
\filldraw[black] (2.5,0.28)  circle (0pt) node[anchor=south]{\tiny $1$};
\filldraw[black] (2.5,-0.8)  circle (0pt) node[anchor=south]{\tiny $2$};
\filldraw[black] (4.98,-0.25)  circle (0pt) node[anchor=south]{\tiny $3$};
\filldraw[black] (6.98,-0.25)  circle (0pt) node[anchor=south]{\tiny $3$};
\filldraw[black] (9.32,-0.25)  circle (0pt) node[anchor=south]{\tiny $4$};
\filldraw[black] (11.32,-0.25)  circle (0pt) node[anchor=south]{\tiny $4$};
\filldraw[black] (4.5,0.28)  circle (0pt) node[anchor=south]{\tiny $1$};
\filldraw[black] (8.5,0.28)  circle (0pt) node[anchor=south]{\tiny $1$};
\filldraw[black] (6.5,-0.8)  circle (0pt) node[anchor=south]{\tiny $2$};
\filldraw[black] (10.5,-0.8)  circle (0pt) node[anchor=south]{\tiny $2$};
\end{tikzpicture}

\end{figure}

\end{ex}

\begin{ex} \label{exloopgraphs}
The reader may like to check that the graph  with $n$ vertices and $n$ edges which form an $n$-gon satisfies
$\Psi_G = x_1+ x_2 +  \ldots + x_n$. 
\end{ex} 
For any graph $G$, the polynomial $\Psi_G$ is homogeneous of degree equal to $h_G$, the number of independent cycles of $G$. The number $h_G$ is  known as the  (first) Betti number of $G$, and 
in physics  is  called the `loop number' of $G$. We shall use the same terminology.

\subsection{Feynman residues} We shall suppose throughout  this section that the graph $G$ has twice as many edges as loops:
\begin{equation} \label{edgestwiceloops} 
|E_G | = 2 h_G
\end{equation}
We shall number the edges of $G$ from $1$ to $n = |E_G|$, and set $x_n=1$. 

\begin{defn} The Feynman residue is defined  by the integral
\begin{equation}  \label{Iresdef} 
I^{\res}_G =  \int_{0 \leq x_1, x_2, \ldots, x_{n-1} \leq \infty}  \frac{dx_1\ldots dx_{n-1} }{ \Psi^2_G\Big|_{x_n=1} } 
\end{equation} 
where $\Psi_G\big|_{x_n=1}$ denotes  $\Psi_G(x_1,\ldots, x_{n-1}, 1)$. 
\end{defn} 

The Feynman integral is often infinite, but one may show that it is finite if and only if $G$ is \emph{sub-divergence free}. This means that  there exists no strict subgraph $\gamma \subsetneq G$  (where $\gamma$ is defined by any subset of edges $E_{\gamma} \subsetneq E_{G}$ and $V_{\gamma}\subset V_{G}$ are their endpoints) such that 
\begin{equation}  \label{divergentsubgraph} 
|E_{\gamma}| \leq  2 h_{\gamma}  \ .
\end{equation}  
A subgraph $\gamma$ satisfying \eqref{divergentsubgraph} is called a divergent subgraph (or `sub-divergence').  

\begin{remark} The value of the integral does not depend on the ordering of the edges of the graph, i.e.,  which  edge variable we set to be equal to $1$. In fact, the Feynman residue is an example of a projective integral
\[  I^{\res}_G =  \int_{\sigma_G} \frac{\Omega_{G}}{\Psi_G^2}  \] 
where the domain of integration
\[ \sigma_G = \{ (x_1:\ldots :x_n) \in \PP^{n-1} (\RR) : x_i \geq 0\}\]
 is the positive coordinate simplex in projective space, and 
\[ \Omega_G = \sum_{i=1}^n (-1)^i x_i dx_1 \wedge \ldots \wedge \widehat{dx_i} \wedge \ldots \wedge dx_n\ .\]
The fact that the integral $I^{\res}_G$ is projective means that the integrand is homogeneous of degree $0$ (which follows from the assumption \eqref{edgestwiceloops}), and that it is invariant under contraction with the Euler vector field $\sum_{i=1}^n x_i \frac{\partial}{\partial x_i}$. This is in turn equivalent to the fact that the  integrand  is the homogenisation 
of a differential form in affine coordinates. The point about a projective integral is that it may be computed by restricting to any affine chart, of which $x_n=1$ is just one possibility. 
Doing so replaces $\Omega_G$ with $\Omega_G\big|_{x_n=1}= (-1)^n  dx_1\ldots dx_{n-1}$, and $\sigma_G\cap \{x_n=1\}$ becomes the infinite hypercube $[0,\infty]^{n-1}$, and thus we retrieve the formula \eqref{Iresdef}, for a suitable choice of orientation on the domain of integration.   In this setting, we typically orient  $\sigma_G$ so that the resulting integral is positive.  
 \end{remark} 
 
 \begin{ex}  Consider the one loop graph with 2 edges: 
 \begin{figure}[h] \begin{center}
 {\includegraphics[width=3cm]{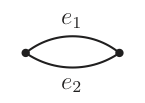}} 
\end{center}
\end{figure} 

\noindent 

We have $\Psi_G = x_1 + x_2$ and hence
  \[ I^{\res}_G = \int_{0}^{\infty}  \frac{dx_1}{(x_1+1)^2} = \left[ \frac{-1}{x_1+1} \right]^{\infty}_0 =1 \ .\] 
 \end{ex}
 
 \begin{ex} The Feynman residue for the graph  in example  \ref{ex: dunce} is infinite. This is  because the graph $G$ contains a divergent subgraph $\gamma$ whose edge set are  the edges $3,4$. It has one loop and two edges, and thus satisfies the condition \eqref{divergentsubgraph}. Similarly, the sunrise  graph example \ref{ex: sunrise} has three divergent subgraphs. 
 \end{ex} 

 \subsection{Examples of Feynman residues and summary of known results} \label{sect: ExFeynmanSummary}
 Below are examples of the first convergent (i.e.,  subdivergence-free) Feynman graphs. Their residues, which are written below each graph,   can be computed efficiently using  Hyperint \cite{Panzer}. 
\begin{figure}[h] \begin{center}
 {\includegraphics[width=10cm]{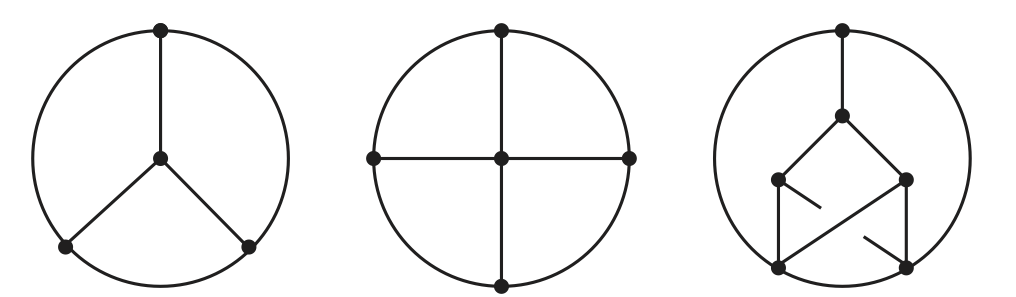}} 
 \[ I_G: \qquad \quad 6 \zeta(3) \qquad   \qquad \qquad 20 \zeta(5)  \quad \qquad \qquad 36 \zeta(3)^2  \qquad \qquad  \quad \] 
\end{center}
\end{figure}
The graph on the left is computed by hand in \cite{BrMassless} using polylogarithm functions, and illustrates how the algorithm works by replacing the non-linear geometry of the Feynman integral with a sequence of  linear fibrations with respect to each edge variable in turn.

The following graph is the complete bipartite graph $K_{3,4}$.

\begin{figure}[h] \begin{center}
 {\includegraphics[width=3cm]{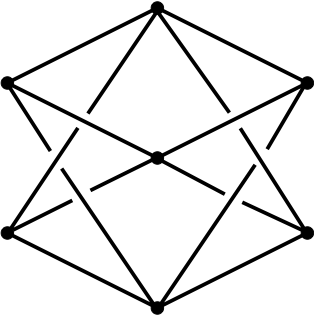}} 
\end{center}
\end{figure}

\noindent Its residue, first computed by Broadhurst and Kreimer, equals
 $$ I^{\res}_{K_{3,4}}=\textstyle{ {27\over 5} \zeta(5,3) + {45\over 4} \zeta(5)\zeta(3) -{261\over 20} \zeta(8)}$$
 which involves a non-trivial multiple zeta value of weight $8$. This graph was historically important since it was one of the very first appearances of multiple zeta values in physics. It not only  changed expectations for the numbers which could arise as Feynman integrals in physics, but was a motivating example for the renewed study of MZV's in mathematics in the 1990's after a hiatus of around two and a half centuries since their discovery by  Euler  in the 18th century.

The integrals  $I_G$ are difficult  to compute by elementary methods: in the first example, $\Psi_G$  is a polynomial of degree $3$ in $6$ variables, and has 16 terms.   

Here follows a  quick overview of known results:

\begin{itemize} \setlength{\itemsep}{0.05in}
\item Consider the wheel graphs $W_n$ with $n\geq 3$ spokes. 

\begin{figure}[h]
\begin{tikzpicture}
  % Draw the outer circle
 \draw[black, thick] (0,0) circle(2cm); \fill[black, thick] (0,0) circle(2pt); \% Draw the spokes \foreach \angle in {0, 72, 144, 216, 288} { \draw[black, thick] (0,0) -- (\angle:2cm); \fill[black, thick] (\angle:2cm) circle(2pt); 
  }
   \filldraw[black] (-3,-1)  circle (0pt) node[anchor=south]{$W_5$};
\end{tikzpicture}
\end{figure}

They are one of the few families of graphs for which the  residues are  known  explicitly. 
One may show that for all $n\geq 3$,
\begin{equation} \label{eqn: wheelresidues} I^{\res}_{W_{n+1}} = \binom{2n}{n} \, \zeta(2n-1)  \end{equation}
and thus every odd single zeta value arises as a Feynman residue. However, in many physical contexts, the wheel graphs are considered `unphysical' for $n\geq 5$ owing to the presence of the central vertex of large degree $n$ (in the standard model of physics as currently understood, at most four particles can interact at a point).

\item  The zig-zag graphs, which are  obtained by connecting the opposite vertices of a linear chain of triangles by an edge,  form an infinite family with the  property that each vertex has degree at most 4 (an important physical constraint, as mentioned above).  Below is pictured the zig-zag graph $Z_5$ with 5 loops. 
\begin{figure}[h] \begin{center}
 {\includegraphics[width=9cm]{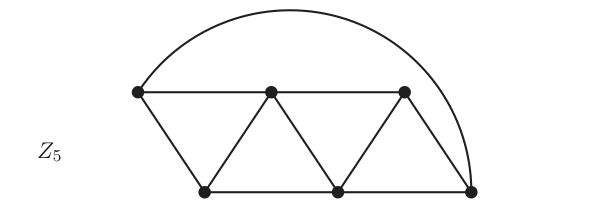}} 
\end{center}
\end{figure}

    It was shown in \cite{ZigZagGraphs} that for $n\geq 3$, 
\[ I^{\res}_{Z_n}= 4 \frac{ (2n-2)!}{ n! (n-1)!  }  \left( 1  - \frac{1- (-1)^n}{2^{2n-3}} \right) \zeta (2n-3)  \ . \]
The proof makes use of the theory of single-valued polylogarithms and leads to a formula for $I^{\res}_{Z_n}$ in terms of the multiple zeta values $\zeta(2,\ldots, 2,3, 2,\ldots,2 )$. 
The numerical values of these residues are by far the largest at each loop order (in other words, it is conjectured that $I^{\res}_G\leq I^{\res}_{Z_n}$ for any subdivergence-free graph $G$ with $n$ loops, with equality if and only if $G$ is isomorphic to $Z_n$ \cite{HeppBounds}.)

\item Not all multiple zeta values  occur as Feynman residues. To establish this one may define a motive \cite{BEK} and motivic period \cite{brownnotesmot} associated to the integrals $I^{\res}_G$. Then, using the theory of weights in mixed Hodge theory, one sees  that no $I^{\res}_G$ 
can give rise to $\zeta(2)$, simply because there exists no convergent graph  satisfying \eqref{edgestwiceloops} with the requisite number of edges. One may deduce, using the so-called coaction principle (an application of motivic Galois theory to Feynman integrals \cite{Cosmic}),  that an infinite sequence of periods which includes $\zeta(3)\zeta(2), \ldots, \zeta(2n-1)\zeta(2), \ldots $ cannot arise as Feynman residues in a precise sense.  In brief, the set of Feynman residues contains large `holes' when compared to the ring of all  MZV's.

\item  There exist some large infinite families of graphs (typically built out of triangles) whose residues  can be proven to be multiple zeta values \cite{PeriodsFeynman}. There are  others (`constructible' graphs) which are in the subring of single-valued multiple zeta values \cite{schnetz2014graphical}.  The number of graphs which can be proven to have  MZV-residues of a given weight  grows  much faster  than the dimension of the space of MZV's in that weight, which implies the existence of many relations between their residues.  To my knowledge these have not been studied. However,  the full class of graphs whose residues are MZV's appears to be even larger, but there is no complete characterisation of this class of graphs at the time of writing. 

\item Not all Feynman residues are expected to be MZV's.  If one assumes a version of the Grothendieck period conjecture, then there are explicit examples of  graphs \cite{BrownSchnetz} whose residues are algebraically independent over the ring of MZV's.
\end{itemize}

 In view of this very rich structure, one can say  that Feynman residues  play the role of a system of fundamental constants in quantum field theory, and play  a similar role to  multiple zeta values  in algebraic geometry. 
\subsection{Some  identities between Feynman residues}
Here follows an incomplete list of known algebraic identities between Feynman residues. 
\begin{enumerate}
 \setlength{\itemsep}{8pt} % Adjust the space between items
  \setlength{\parskip}{5pt} % Adjust the space between paragraphs
  \setlength{\topsep}{15pt} % Adjust the space above and below the list
\item (Two-vertex join). Feynman graphs and their residues admit the following multiplicative structure.
Suppose that we have two graphs $G_1,G_2$ and let $e_1\in E_{G_1}$, $e_2\in E_{G_2}$ denote two edges with (distinct) endpoints $v_1,w_1$ and $v_2,w_2$ respectively.  Let $G=G_1 : G_2$ denote their two vertex join, which is defined by gluing together $v_1,v_2$ and $w_1,w_2$ and removing the edges $e_1,e_2$. 
\begin{figure}[h] \begin{center}
 {\includegraphics[width=10cm]{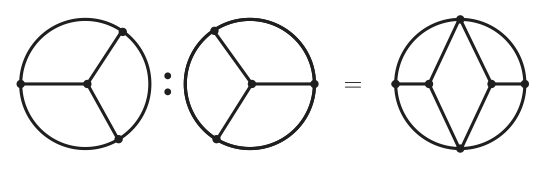}} 
\put(-220,40){$e_1$}\put(-180,40){$e_2$}
\put(-220,80){$v_1$}\put(-180,80){$v_2$}
\put(-220,5){$w_1$}\put(-180,5){$w_2$}
\end{center}
\end{figure}
Then if all integrals are finite  one may prove that 
\[ I^{\res}_G = I^{\res}_{G_1} I^{\res}_{G_2}\ .   \]  
 
\item (Planar duals). If $G$ is a graph with a planar embedding, and $G^{\vee}$ the corresponding dual graph, then one may show that 
\[ I^{\res}_G = I^{\res}_{G^{\vee}}\]  
if the corresponding integrals are finite. 
\item (Completion). Completion-invariance is a powerful constraint on Feynman residues. To state it, we assume that $G$ is  subdivergence-free and has all vertices of degree $\leq 4$.  It follows from 
\eqref{edgestwiceloops} that if $E_G>2$, then  $G$ has exactly four vertices $v_1,\ldots, v_4$ of degree three (depicted in white below).  Let $\widehat{G}$ denote the 4-regular graph obtained by adding a new vertex to $G$ and connecting it to $v_1,\ldots, v_4$.  
\begin{figure}[h] \begin{center}
 {\includegraphics[width=8cm]{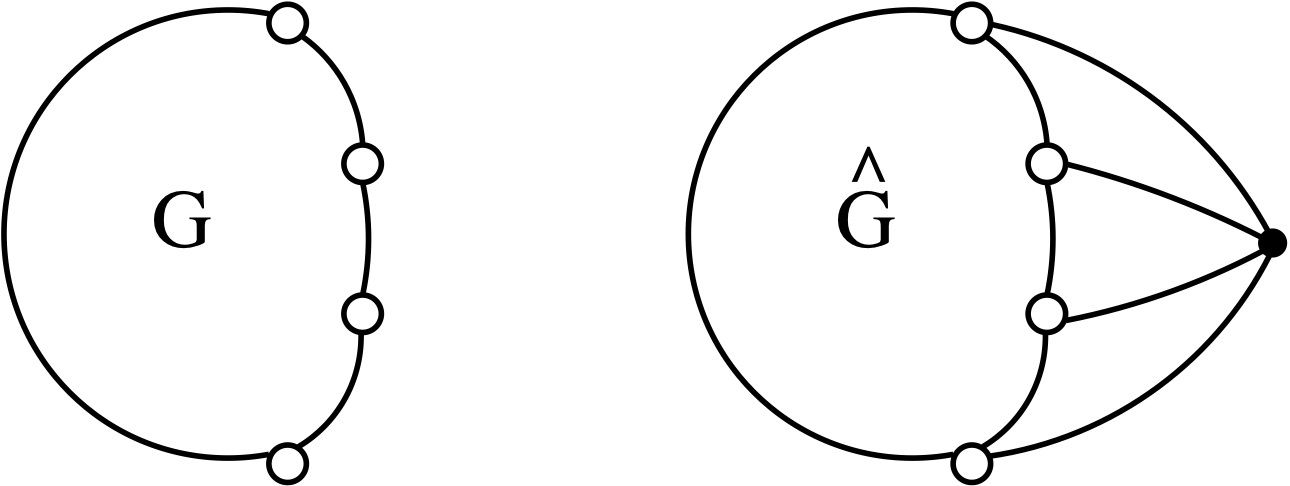}} 
\end{center}
\end{figure}

Then one may prove that the residue $I^{\res}_G$ only depends on $\widehat{G}$. In other words,  if two distinct graphs $G_1,G_2$ have isomorphic completions $\widehat{G}_1 \cong \widehat{G}_2$ then 
\[ I^{\res}_{G_1} = I^{\res}_{G_2}\ . \]
Concretely, this means that, given the completion $\widehat{G}$ of a graph as above, one may remove any vertex from it, giving rise to possibly non-isomorphic graphs whose residues are equal. 
This property of completion invariance is very far from obvious from definition \eqref{Iresdef}: indeed, the graph polynomials $\Psi_G$ of two graphs with isomorphic completions may bear little resemblance. See \cite{Schnetz} for examples. 

\item (Further identities). There exist many more known identities between Feynman residues, including various `twist' identities  to which we refer to \cite{Schnetz} for further details. They are not exhaustive: a complete combinatorial criterion for when $I^{\res}_{G_1} = I^{\res}_{G_2}$ is not known, even conjecturally, although there exist other graph invariants whose values are conjectured to coincide if and only if the corresponding Feynman residues are equal.

\end{enumerate} 

\begin{remark}
An active area of research is to construct numerical invariants of graphs which satisfy similar properties to the residue.  Examples of such invariants come from tropical geometry, number theory (counting zeros of graph polynomials over finite fields), and various combinatorial and analytic ideas (see \cite{HeppBounds} for an overview). 

In a world where artificial intelligence is playing an increasing role in the identification of patterns, it is becoming more urgent to generate data of Feynman integrals, especially in view of the applications to physics experiments.  An exact symbolic method for computing certain Feynman residues was described in \cite{BrMassless} and implemented in \cite{Panzer}. Numerical algorithms have also made great strides in recent years \cite{BorinskyTMCQ, Balduf2023}.

\end{remark}

\subsection{Contraction-Deletion}  An important property of  graph polynomials (recall \S\ref{sect: GraphPolys}), which is shared by many other invariants of graphs,  are the  contraction-deletion relations. 
For $G$ as above, let 
$ G\backslash  e $ denote the graph in which the edge $e$ is deleted and let 
\[ G /\!\! / e  = \begin{cases} 0
\quad  \hbox{ if } e \hbox{ is a self-edge} \\ 
 G/e  \quad \hbox{ otherwise}  \end{cases} \]  where $G/e$ denotes the graph in which the edge $e$ is contracted and $0$ is the `zero' graph which satisfies $\Psi_0=0$. Equivalently, $G/e$ is obtained from $G$ by deleting $e$ and identifying its endpoints in the case when they are distinct.  The contraction-deletion relations state that 
 \[  \Psi_G = x_e \Psi_{G \backslash   e}  + \Psi_{G\q e}\ .\] 
 Note that in this formula, $G \backslash  e$ may have more than one component, in which case the corresponding graph polynomial vanishes. 
  \begin{ex} Consider the following graphs obtained from the sunrise graph $G$ on the left by deleting and contracting edge $e_2$.

\begin{figure}[h]
\begin{tikzpicture}
% Draw a black circle
  \draw[black, thick] (0,0) circle (1cm);
  % Draw the horizontal diameter
  \draw[black, thick] (-1,0) -- (1,0);
  \filldraw[black] (0,0)  circle (0pt) node[anchor=south]{\small $2$};
    \filldraw[black] (0,1)  circle (0pt) node[anchor=south]{\small $1$};
      \filldraw[black] (0,-1)  circle (0pt) node[anchor=south]{\small $3$};
        \filldraw[black] (-1.5,-1)  circle (0pt) node[anchor=south]{$G$};
 % Draw the black upper semi-circle
\fill[black] (-1,0) circle (2pt);
\fill[black] (1,0) circle (2pt);
 
\draw[black, thick] (6,0) arc[start angle=0, end angle=360, radius=1cm];
% Draw the small filled black circles at endpoints
\fill[black] (6,0) circle (2pt);
\fill[black] (4,0) circle (2pt);
 \filldraw[black] (5,1)  circle (0pt) node[anchor=south]{\small $1$};
  \filldraw[black] (5,-1)  circle (0pt) node[anchor=south]{\small $3$};
   \filldraw[black] (3.5,-1)  circle (0pt) node[anchor=south]{$G\! \setminus\!  e_2$};

\fill[black] (9.5,0) circle (2pt);
\draw[black, thick] (9,0) circle (0.5cm);
\draw[black, thick] (10,0) circle (0.5cm);
 \filldraw[black] (9,-0.6)  circle (0pt) node[anchor=north]{\small $1$};
  \filldraw[black] (10,-0.6)  circle (0pt) node[anchor=north]{\small $3$};
    \filldraw[black] (8,-1)  circle (0pt) node[anchor=south]{$G/ e_2$};

\end{tikzpicture}
\end{figure}

  One has 
\[
  \Psi_G   =   \alpha_2 \Psi_{G\setminus e_2} + \Psi_{G/ e_2}   =   \alpha_2( \alpha_1 + \alpha_3) +  \alpha_1 \alpha_3 \ . \]

  \end{ex}
 The contraction-deletion relation may easily be verified from the definition, on noting that the set of spanning trees in $G$ is the disjoint union of those spanning trees which contain $e$, and those which do not. The former is in bijection with the set of spanning trees of $G\q e$, the latter with those of $G \backslash  e$.

\subsection{Graph Laplacian} Let $G$ be connected. Consider the free $\ZZ$-module $\ZZ^{E_G}$ generated by the set of edges of $G$, and equipped with the inner product defined by 
\[  \langle e_i, e_j \rangle = \delta_{ij} x_{e_i} \ . \]
Choose, in the usual fashion, an orientation on each edge and let $s(e), t(e)$ denote the source and target of $e$ respectively. The boundary map
\begin{eqnarray} \ZZ^{E_G} &  \overset{\partial}{\To} &  \ZZ^{V_G}  \nonumber  \\
e & \mapsto & t(e) - s(e) \nonumber 
\end{eqnarray} 
has kernel 
\[ H_1(G;\ZZ) := \ker \partial\]
which is a free $\ZZ$-module of rank $h_G$, generated by closed oriented cycles. 
The inner product defined above restricts to an inner product on the submodule  $H_1(G;\ZZ) \subset \ZZ^{E_G}$.

A graph Laplacian matrix\footnote{There are various different definitions of graph Laplacian matrix in the literature, the one considered here is sometimes called the dual graph Laplacian matrix.} is defined as follows.    For any basis of closed cycles $c_1,\ldots, c_{h_G}$, consider the $h_G \times h_G$ matrix $\Lambda_G$ whose entries are 
\[ (\Lambda_G)_{ij} =  \langle c_i, c_j \rangle\ . \]  
It  depends on the choice of basis, but not on the orientations of the edges of $G$. Changing the basis of $G$ modifies $\Lambda_G$ by a matrix $P \in \GL_{h_G}(\ZZ)$ via the change of basis formula $\Lambda_G \mapsto P^T \Lambda_G P$. Since $\det P \in \{1, -1\}$, it follows that the determinant of $\Lambda_G$ is well-defined. 

It follows from its definition as the restriction of an inner product that any graph Laplacian matrix $\Lambda_G$ is positive semi-definite whenever $x_e \geq 0$.

\begin{ex} Consider the graph $G$ with edges oriented as follows:

\begin{figure}[h]
\begin{tikzpicture}
\filldraw[black] (-2,0.5)  circle (0pt) node[anchor=south]{\small $1$};
\filldraw[black] (-2,-0.5)  circle (0pt) node[anchor=south]{\small $2$};
\filldraw[black] (-0.9,0)  circle (0pt) node[anchor=east]{\small $3$};
\filldraw[black] (-0.7,0)  circle (0pt) node[anchor=west]{\small $4$};
 \fill[black] (-1,1) circle(2pt);
  \fill[black] (-1,-1) circle(2pt);
   \fill[black] (-3,0) circle(2pt);
  \draw[black,thick, postaction={on each segment={mid arrow=black}}] (-1,1) .. controls (-0.6,0.4) and (-0.6, -0.4) .. (-1,-1);
  \draw[black,thick,postaction={on each segment={mid arrow=black}}] (-1,1) .. controls (-1.5,0.4) and (-1.5, -0.4)  .. (-1,-1);
  \draw[black,thick, postaction={on each segment={mid arrow=black}}] (-3,0) -- (-1,-1);
   \draw[black,thick, postaction={on each segment={mid arrow=black}}] (-3,0) -- (-1,1);
\end{tikzpicture}
\end{figure}
\noindent 
Let $c_1,c_2$ denote the cycles $c_1 = e_1+e_3-e_2$, $c_2 = e_3-e_4$, whose classes form a basis of $H_1(G;\ZZ)$.  Then, with respect to this basis, the graph Laplacian matrix is:
\[ \Lambda_G =  \begin{pmatrix} 
x_1+x_2+x_3 &  x_3 \\
x_3    & x_3+x_4 
\end{pmatrix} \ . 
\]
\end{ex}

\begin{ex}
Now consider the sunrise graph 
\begin{figure}[h]
\begin{tikzpicture}
    \draw[black, thick, postaction={on each segment={mid arrow=black}}] (0,0) circle (1cm);
  % Draw the horizontal diameter
  \draw[black, thick, postaction={on each segment={mid arrow=black}}] (1,0) -- (-1,0);
  \filldraw[black] (0,0)  circle (0pt) node[anchor=south]{\small $2$};
    \filldraw[black] (0,1)  circle (0pt) node[anchor=south]{\small $1$};
      \filldraw[black] (0,-1)  circle (0pt) node[anchor=south]{\small $3$};
     %   \filldraw[black] (-1.5,-1)  circle (0pt) node[anchor=south]{$G$};
 % Draw the black upper semi-circle
\fill[black] (-1,0) circle (2pt);
\fill[black] (1,0) circle (2pt);
\end{tikzpicture}
\end{figure}
with the cycle basis $c_1 = e_1-e_2$, $c_2 = e_2 + e_3$. The corresponding graph Laplacian matrix is 
\begin{equation} \label{LambdaSunrise}
    \Lambda_G = \begin{pmatrix}
        x_1+ x_2 & -x_2 \\
        -x_2 & x_1+x_3 
    \end{pmatrix}
\end{equation}

\end{ex} 
\begin{ex} Consider the wheel with 3 spokes with edges oriented as follows.
\begin{figure}[h]
\begin{tikzpicture}
  % Draw the outer circle
  \filldraw[black] (-0.8,0.5)  circle (0pt) node[anchor=south]{\small $1$};
   \filldraw[black] (-0.8,-1)  circle (0pt) node[anchor=south]{\small $2$};
    \filldraw[black] (0.8,0)  circle (0pt) node[anchor=south]{\small $3$};
   \filldraw[black] (-2.2,0)  circle (0pt) node[anchor=south]{\small $6$};
    \filldraw[black] (1.5,1.4)  circle (0pt) node[anchor=south]{\small $5$};
     \filldraw[black] (1.5,-1.9)  circle (0pt) node[anchor=south]{\small $4$};
  \draw[black, thick, postaction={on each segment={mid arrow=black}}] (0,0) circle(2cm);
   \fill[black,thick, ] (0,0) circle(2pt);
  % Draw the spokes
  \foreach \angle in {0, 120, 240} {
    \draw[black, thick, postaction={on each segment={mid arrow=black}}] (0,0) -- (\angle:2cm);
    \fill[black,thick] (\angle:2cm) circle(2pt); 
  }
\end{tikzpicture}
\end{figure}
With respect to the cycle basis $c_1= e_1- e_2 +e_6$, $c_2 = e_3+e_5-e_1$ , $c_3= e_2-e_3+e_4$, the graph Laplacian matrix takes the form
\[ \Lambda_{W_3} =  \begin{pmatrix} 
   x_1+x_2+x_6 & - x_1  & -x_2 \\
   - x_1 &    x_1+x_3+x_5   &  -x_3 \\
    -x_2 & -x_3 &   x_2+x_3+x_4
\end{pmatrix}
\]
\end{ex} 
The following theorem is a consequence of the `matrix-tree theorem'.
\begin{thm} The determinant  of a graph Laplacian matrix equals the graph polynomial:
\[ \det \Lambda_G = \Psi_G\ .\]
\end{thm}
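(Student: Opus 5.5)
Since $\det \Lambda_G$ does not depend on the choice of cycle basis, I will choose a convenient one. Fix a spanning tree $T_0$. For each of the $h_G$ edges $e\notin T_0$ there is a unique oriented cycle $c_e$ consisting of $e$ and the path in $T_0$ joining its endpoints. These fundamental cycles form a $\ZZ$-basis of $H_1(G;\ZZ)$. Let $C$ be the $|E_G|\times h_G$ integer matrix whose columns express the $c_e$ in the edge basis, and let $D=\mathrm{diag}(x_e)$. Then $\Lambda_G = C^T D C$.

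The main step is the Cauchy--Binet formula. It gives
\[ \det(C^TDC) = \sum_{S\subset E_G,\ |S|=h_G} \det(C_S)^2 \prod_{e\in S} x_e, \]
where $C_S$ is the $h_G\times h_G$ submatrix of $C$ with rows indexed by $S$. To prove the theorem it therefore suffices to show that $\det(C_S)^2=1$ when $E_G\setminus S$ is a spanning tree, and $\det(C_S)=0$ otherwise. Since $|E_G\setminus S| = |V_G|-1$, the complement $E_G\setminus S$ is a spanning tree exactly when it is acyclic, or equivalently when it is connected and spanning.

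\textbf{Case 1: $E_G\setminus S$ is not a spanning tree.} Then $E_G\setminus S$ is not connected, so there is a nonzero cut vector. Take a partition $V_G=A\sqcup B$ with no edge of $E_G\setminus S$ crossing it, and let $u\in\ZZ^{E_G}$ record $\pm1$ on the edges crossing from $A$ to $B$ (sign by orientation) and $0$ elsewhere. By construction $u$ is supported in $S$. Since $G$ is connected, $u\neq 0$. Every cycle crosses a cut with total signed multiplicity zero, so $u^T C=0$. Restricting to the rows in $S$ gives $u_S^T C_S=0$ with $u_S\neq0$, hence $\det C_S=0$.

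\textbf{Case 2: $T=E_G\setminus S$ is a spanning tree.} I claim $C_S$ is unimodular. Every cycle $z\in H_1(G;\ZZ)$ is determined by its coefficients on $S$: if two cycles agree on $S$, their difference is a cycle supported on the tree $T$, which must be $0$. Conversely, the fundamental cycles of $T$ show that every vector in $\ZZ^S$ occurs as the $S$-part of some integral cycle. So projection to the $S$-coordinates is an isomorphism $H_1(G;\ZZ)\to\ZZ^S$. The matrix of this isomorphism with respect to the basis $(c_e)$ is exactly $C_S$, so $C_S\in \GL_{h_G}(\ZZ)$ and $\det C_S=\pm1$.

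Combining the two cases yields $\det\Lambda_G=\sum_T\prod_{e\notin T}x_e=\Psi_G$. I expect the only delicate point to be the integrality in Case 2, namely that $\det C_S=\pm1$ rather than some larger integer. This is handled by the observation that fundamental cycles with respect to \emph{any} spanning tree form an integral basis. Equivalently, one could argue by induction via contraction--deletion, but the Cauchy--Binet route is more direct.
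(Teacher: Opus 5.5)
Your proof is correct. The paper gives no argument of its own and simply attributes the identity to the matrix-tree theorem. What you have written is the standard Cauchy--Binet proof of that theorem, but run on the cycle matrix $C$ rather than on a reduced incidence matrix.

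This is a small but real difference. In its usual form, the matrix-tree theorem computes the reduced vertex Laplacian $\tilde\partial D\tilde\partial^T$ and yields $\sum_T\prod_{e\in T}x_e$. Extracting the paper's dual statement $\det(C^TDC)=\sum_T\prod_{e\notin T}x_e$ from it therefore needs an extra step: invert the weights, then relate $\det(C^TDC)$ to $\det(D)\det(B^TD^{-1}B)$, with $B^T$ a reduced incidence matrix, using the orthogonality of the cycle and cut spaces.

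Your route avoids that step. Its only inputs are that cut vectors annihilate cycles (Case 1), and that projecting $H_1(G;\ZZ)$ onto the coordinates off a spanning tree is an isomorphism onto $\ZZ^S$ (Case 2). It also works as written for graphs with self-loops and multiple edges, which the paper allows.

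One simplification: the preliminary reduction to a fundamental-cycle basis is unnecessary. Case 1 only uses that the columns of $C$ are cycles, and Case 2 only that they form a $\ZZ$-basis of $H_1(G;\ZZ)$. So your argument proves the identity directly for every choice of graph Laplacian $\Lambda_G$, without invoking basis-invariance of the determinant.
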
 
In particular, if $G$ is a connected graph then $\Lambda_G$ is positive-definite whenever $x_e>0$ for all $e\in E_G.$
The Feynman residue may  thus be expressed as a projective integral
\begin{equation} I^{\res}_G = \int_{\sigma_G}  \frac{\Omega_G}{ (\det \Lambda_G)^2} \end{equation}
for any choice of graph Laplacian $\Lambda_G$. The example \ref{exampleintrozeta3nonlinear} is the case  $G=W_3$, restricted to the affine chart $x_6=1.$

This observation shows that Feynman residues are naturally expressed in terms of determinants of matrices with linear entries, and therefore provide a canonical class of examples of the non-linear geometry introduced in the introduction. Not only do they furnish a rich and natural class of period integrals of determinantal type, but also provide a fundamental connection between number theory, combinatorics, and quantum field theory.

\subsection{General Feynman periods}
In quantum field theory one also encounters more general projective Feynman integrals of the following form
\begin{equation} \label{GeneralIG}  \int_{\sigma_G}  \frac{ N(x_e)}{\Psi_G^d}\, \Omega_G\end{equation}
for some (typically  highly structured)  numerator polynomial  $N(x_e) \in \QQ[x_e, e\in E_G] $ and for some integer $d$. The canonical integrals discussed in $\S 3$ will be  of this type. By interpreting $\Psi_G=\det \Lambda_G$  as the determinant of a graph Laplacian and restricting to an affine chart $x_e=1$, we see that  these generalised Feynman periods are all of the form \eqref{introIasdet}.

\subsection{*Physical background}  \label{sect:physics} The preceding discussion motivates the study of Feynman residues from a purely mathematical point of view.  Nevertheless, they originate  in high-energy physics where the computation of the corresponding integrals is an essential step in making predictions for particle collider experiments. 

In high-energy physics, a Feynman graph represents a certain type of interaction (called a `process') between particles according to the laws of physics as currently understood (i.e., within the framework of quantum field theory).  To every such graph is associated a  Feynman integral $I_G$, which is interpreted as a  kind of probability for that process to occur. In order to obtain a prediction for an experiment, one must sum over all possible Feynman integrals (in practice, of course, one can only compute finitely many) and take their norm squared, which gives an estimate of the probability of observing a particular scattering process.

\begin{figure}[h]
\begin{center}
\quad {\includegraphics[width=9.0cm]{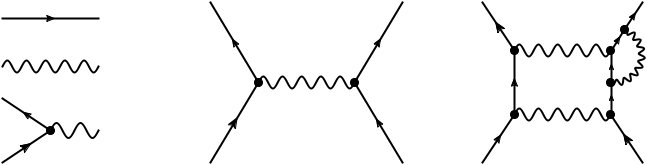}} 
\put(-160,0){$G_1$}\put(-50,0){$G_2$}
\end{center}
\caption{In quantum electrodynamics, the theory of light and matter, Feynman graphs are built out of  two types of edges and one vertex (left). The graph  $G_1$ represents, for example, two electrons (straight lines) exchanging a photon (wiggly line) which contributes to  the repulsive force experienced between two electrons. 
The graph $G_2$ represents the same process, but has two `loops', and thus represents a higher-order quantum correction to the scattering of two electrons. }
\end{figure}
Here follow some very  brief pointers of  relevance to mathematicians who  may wish to know more about this very important and extremely active part of high-energy physics: 
\begin{itemize}
    \item      A key feature of this formalism is that the graph polynomial $\Psi_G$
 is universal: it depends only on the combinatorics of the graph, whereas the choice of physical theory affects only the \emph{numerator} of the integrand \eqref{GeneralIG}. 
    This means that Feynman integrals are certain linear combinations of certain \emph{universal}  periods.

\item  In general one must consider particles with masses and momenta. To express this, one must introduce an additional graph polynomial  which is a function of this data. It has very similar properties to $\Psi_G$ considered above.   The most general Feynman integrals have both $\Psi_G$, and this  new polynomial, called the second Symanzik polynomial, in the denominator. In 4 dimensions this polynomial may  be expressed as the determinant\footnote{which also admits a `tropical' interpretation} of a square matrix of rank $h_G+1$ \cite{BrLaplacian}
Consequently,  Feynman integrals with masses and momenta are  periods of families of  motives which are universally associated to  $G$.  

\item The resulting Feynman integrals are often infinite. A very widespread technique for regularising them, called dimensional-regularisation, or `dim-reg', is to allow the parameter $d$ in  \eqref{GeneralIG} to be a real number. Divergences then appear as poles in $d$.  Consequently  \eqref{GeneralIG} is  viewed as a Mellin transform. The theory of renormalisation provides a physically-meaningful combinatorial procedure for cancelling out the divergences  coming from different graphs. 
\end{itemize}

\section{Tropical curves and their moduli}

In the previous section, we introduced an important class of periods called Feynman residues and compared and contrasted them with multiple zeta values. Their definition involved the graph Laplacian matrix, whose geometric interpretation involves the moduli space of tropical curves, which is the topic of this lecture.  We recommend Melody Chan's lecture notes \cite{MelodyTropicalLectures} for further background. 
  
\subsection{Stable weighted graphs}
Let  $G$ be a  finite  connected graph. A \emph{weighting} is a map 
\[ w:  V_G \To \ZZ_{\geq 0}\]
which assigns a non-negative integer, or \emph{weight}, to each vertex of $G$. A pair $(G,w)$ is called a weighted graph. When depicting a weighted graph, unlabelled vertices will always be assumed to have weight zero.

The \emph{genus} of a weighted graph $(G,w)$ is defined by 
\[ g(G,w) = h_G + \sum_{w \in V_G} w(v)\ . \]
A weighted graph is \emph{stable} if every vertex of weight $0$ has degree $\geq 3$ and every vertex of weight $1$ has degree $\geq 1$.

\begin{ex} There are exactly 7 stable weighted graphs of genus $2$:

\begin{figure}[h]
\begin{tikzpicture} 
\filldraw[black] (0,0) circle (2pt) node[anchor=east]{$2\,$};

\draw[black,thick] (1.5,0) circle (10pt);\filldraw[black] (1.15,0) circle (1.5pt) node[anchor=east]{$1\,$ };

\draw[black,thick] (2.8,0) -- (3.8,0); \filldraw[black] (2.8,0) circle (1.5pt) node[anchor=north]{$1$};
\filldraw[black] (3.8,0) circle (1.5pt) node[anchor=north]{$1$};

\draw[black,thick] (6,0) circle (10pt);\filldraw[black] (5.15,0) circle (1.5pt) node[anchor=east]{$1\,$ };
\draw[black,thick] (5.15,0) -- (5.65,0);\filldraw[black] (5.65,0) circle (1.5pt);

\draw[black,thick] (7.3,0) circle (10pt);
\draw[black,thick] (8,0) circle (10pt);
\filldraw[black] (7.65,0) circle (1.5pt);
\draw[black,thick] (9.7,0) circle (14pt);
\draw[black,thick] (9.2,0) -- (10.2,0);
\filldraw[black] (9.2,0) circle (1.5pt);
\filldraw[black] (10.2,0) circle (1.5pt);
\draw[black,thick] (11.3,0) circle (10pt);
\draw[black,thick] (12.5,0) circle (10pt);
\draw[black,thick] (11.65,0) -- (12.15,0);
\filldraw[black] (11.65,0) circle (1.5pt);
\filldraw[black] (12.15,0) circle (1.5pt);

%\draw[black,thick] (5.3,0) circle (10pt); \draw[black,thick] (6,0) circle (10pt); \filldraw[black] (5.65,0) circle (1.5pt); 

%\draw[black,thick] (7.7,0) circle (14pt);\draw[black,thick] (7.2,0) -- (8.2,0);
%\filldraw[black] (7.2,0) circle (1.5pt); \filldraw[black] (8.2,0) circle (1.5pt); 

%\draw[black,thick] (9.3,0) circle (10pt); \draw[black,thick] (10.5,0) circle (10pt); 
%\draw[black,thick] (9.65,0) -- (10.15,0);
%\filldraw[black] (9.65,0) circle (1.5pt); \filldraw[black] (10.15,0) circle (1.5pt); 

\end{tikzpicture}
\end{figure}

\end{ex} 

The contraction of an edge $e$ in a weighted graph $(G,w)$ is defined as follows. If $e$ has distinct endpoints $v_1,v_2$ then the graph $G/e$ has  a unique vertex $v$ which corresponds to the identification of $v_1$ and $v_2$: it is assigned the weight $w(v) = w(v_1)+ w(v_2)$. The weights of all other vertices are unchanged.

If $e$ is a self-edge based at a vertex $v$ then the contraction $(G/e,w')$ is defined to be the graph in which the edge $e$ is removed, and the vertex $v$ is assigned the weight $w'(v) = w(v)+1$.  The weighting $w'$ is defined to be equal to $w$ for all other vertices.

\begin{figure}[h]
\begin{tikzpicture} 
\draw[black,thick] (0,2) -- (2,2) node[midway, above] {$e$}; ; \filldraw[black] (0,2) circle (1.5pt) node[anchor=north]{$w_1$};
\filldraw[black] (2,2) circle (1.5pt) node[anchor=north]{$w_2$};
\filldraw[black] (0,2) circle (1.5pt); 
\draw[thick, decorate,decoration={snake,amplitude=1mm,segment length=5mm},->] (3,2) -- (4,2)  node[midway, above] {$/e$};
\filldraw[black] (5,2) circle (1.5pt) node[anchor=north]{$w_1\! +\! w_2$};

\draw[black,thick] (1,0.35) circle (10pt); \node at (1, 0.9)  {$e$};; 
\filldraw[black] (1,0) circle (1.5pt) node[anchor=north]{$w$};
\draw[thick, decorate,decoration={snake,amplitude=1mm,segment length=5mm},->] (3,0.35) -- (4,0.35)  node[midway, above] {$/e$};
\filldraw[black] (5,0.35) circle (1.5pt) node[anchor=north]{$w\! +\! 1$};

\end{tikzpicture}
    
\end{figure}
With this definition, we see that the role of the weighting is to keep track of contracted self-edges (and by extension, loops), and to ensure that the genus $g(G,w)$ is preserved under the contraction of all  edges, whether they be self-edges or not.

\subsection{*Geometric interpretation} 
The connected stable weighted graphs are in one-to-one correspondence with the boundary strata of the Deligne-Mumford compactification $\overline{\mathcal{M}}_g$ of the moduli space $\mathcal{M}_g$ of Riemann surfaces of genus $g$, for $g\geq 2$. 

In this correspondence, vertices of weight $g$ correspond to irreducible curves of genus $g$, and edges correspond to nodes. The contraction of edges are dual to nodal  degenerations: the latter may be visualised  by tying a rope around a closed curve on a surface of genus $g$ and pulling it tight until it pinches to a nodal point. See below for a picture of genus 2.

\begin{figure}[h] \begin{center}
 {\includegraphics[width=14cm]{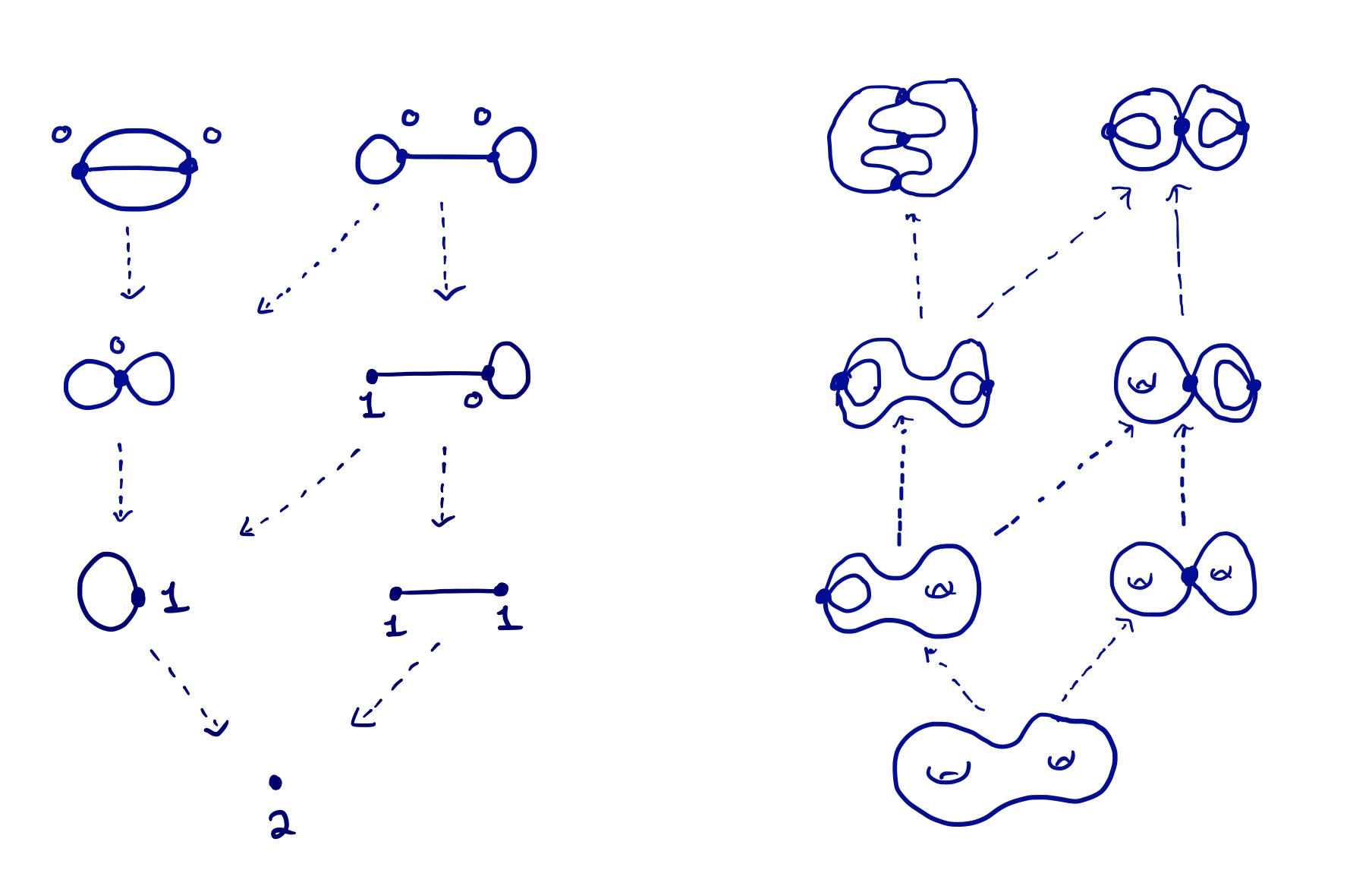}} 
\end{center}
\end{figure}

Since the interpretation of boundary strata of $\overline{\mathcal{M}}_g$ is very well-known  we shall say no more about it here.  However, it is important to bear in mind, when comparing with the  moduli space of tropical curves, that the  correspondence is dimension-reversing (for example, a single vertex of weight $g$ corresponds to the big open stratum $\mathcal{M}_g$ of $\overline{\mathcal{M}}_g$,  but will correspond to the cone point in the tropical moduli space which is discussed below). 

\subsection{Metric graphs}
A \emph{metric} on a connected finite graph $G$ is a map
\[ \ell:  E_G \To \RR_{>0}\]
which to each edge of $G$ assigns a positive length  $\ell_e>0$. 
A \emph{tropical curve}, for our purposes, is nothing other than a \emph{weighted, stable, metric graph}.
Often one refers to the  `combinatorial type' of a tropical curve as the stable, weighted graph which is obtained  from it  by forgetting all the edge lengths.

\subsection{Moduli space of metric graphs} The moduli space of metrics on a given combinatorial type $G$ is given by the open quadrant
\[ C_G =  \RR_{>0}^{E_G}  \ .  \]
 We denote its closure in $  \RR^{E_G}$ by 
 \[ \overline{C}_G = \RR_{\geq 0}^{E_G}  = \{ (\ell_e)_{e\in E_G}: \ell_e \geq 0 \}\ . \]
Note that if $E_G$ is empty, then $\overline{C}_G$ is the empty product,   and hence reduces to a point. 
 Points in $\overline{C}_G$ correspond to metric graphs with combinatorial type $G$ in which we also allow edge lengths $\ell_e$ to be zero. The moduli space of tropical curves of genus $g$ is defined to be 
 \[ \mathcal{M}_g^{\trop} =  \left( \bigcup_{G} \overline{C}_G \right) / \sim \] 
where the union runs over all stable, weighted graphs of genus $g$, and the equivalence relation $\sim$ is generated by: 
\begin{itemize}
\item Isomorphisms  for all $G, e\in E_G$ of the form
\[ \overline{C}_{G/e}  \overset{\sim}{\To} \{\ell_e=0\} \subset  \overline{C}_G\]
which identifies a metric graph $G/e$ with the  metric graph $G$ for which the length $\ell_e$ is zero and all other edges have the same lengths. 
\item  The action of the group of automorphisms of $G$ on $\overline{C}_G$ by permuting the coordinates of $\RR_{\geq 0}^{E_G}$. More precisely, 
$\sigma: (\ell_e)_{e\in E(G)} \mapsto (\ell_{\sigma(e)})_{e\in E(G)}$ for all $\sigma \in \mathrm{Aut}(G)$. 
\end{itemize}
Although the cube $\overline{C}_G$ of edge-lengths does not depend on the weighting $w$ of $G$, the group of automorphisms $\mathrm{Aut}(G) = \mathrm{Aut}(G,w)$ does.
Note also that only the image of the automorphism group  $\mathrm{Aut}(G) 
\rightarrow \mathrm{Aut}'(G) \leq \Sigma_{E(G)}$ of $G$ in the group of permutations of the set of edges of $G$ plays a role in the above definition. Thus  here we are considering a `coarse' version of the moduli space of graphs. See \cite{ModuliGraphsStack}  for an orbifold version in which quotients are taken with respect to the full group of automorphisms of $G$.

\begin{ex} Consider the following weighted stable graph of genus $3$.

\begin{figure}[h]\begin{tikzpicture}
  \begin{scope}[shift={(-1cm,0)}]
    \node at (0.5,-1.5)  {$\mathrm{Aut}' (G )\cong \mathbb{Z}/2\mathbb{Z}$};
    \draw[blue,thick] (0,0) .. controls (0.333,0.4) and (0.666,0.4) .. (1,0);
    \node at (0.5,0.6)  {$\ell_1$};
    \fill(0.5,-0.3) circle (0pt)  node[below] {$\ell_2$};
    \draw[blue,thick] (0,0) .. controls (0.333,-0.4) and (0.666,-0.4) .. (1,0);
    \filldraw[blue,thick] (0,0) circle (1.5pt) node[left] {\small{$1$}};
    \filldraw[blue,thick] (1,0) circle (1.5pt) node[right] {\small{$1$}};
  \end{scope}

  \draw[blue,thick,->] (3.5,-2) -- (7,-2);
 \draw[blue,thick,->] (4,-2.5) -- (4,1);
\draw[red, thick, dashed] (4,-2) -- (7,1);

\draw[red, thick, <->] (7,-1+1) .. controls (6.8,-0.6+1) and (6.6,-0.4+1) .. (6,0+1);

 \filldraw[blue] (3.6,-2.3) circle (1.5pt) node[anchor=north]{$3$};
  \filldraw[blue] (5.5,-2.3) circle (1.5pt) node[anchor=east]{$2$};
  \draw[blue,thick] (5.7,-2.3) circle (6pt);   \node at (6.1,0.-2.3) {$\ell_1$}; 
   \draw[blue,thick] (3.4,-0.4) circle (6pt);   \node at (3.8,-0.4) {$\ell_2$}; 
    \filldraw[blue] (3.2,-0.4) circle (1.5pt) node[anchor=east]{$2$};

\draw[blue,thick] (4.8,0.2) .. controls (4.9665,0.4) and (5.133,0.4) .. (5.3,0.2); \node at (5.05,0.55) {$\ell_1$}; \fill (5.05,0.06) circle (0pt) node[below] {$\ell_2$}; \draw[blue,thick] (4.8,0.2) .. controls (4.9665,0) and (5.133,0) .. (5.3,0.2); \filldraw[blue,thick] (4.8,0.2) circle (1.5pt) node[left] {\small{$1$}}; \filldraw[blue,thick] (5.3,0.2) circle (1.5pt) node[right] {\small{$1$}};

\end{tikzpicture}
\end{figure}
\noindent 
Denote its edge lengths by $\ell_1,\ell_2 > 0$.  Its automorphism group is $\mathrm{Aut}(G) = \ZZ/2\ZZ \times \ZZ/2\ZZ$; the quotient acting as edge permutations  is $\mathrm{Aut}'(G) \cong \ZZ/2\ZZ. $  On the right is the corresponding  cube $\overline{C}_G$ with coordinates $(\ell_1,\ell_2)$ where $\ell_1,\ell_2 \geq 0$. The edges and corner of this cube  are identified with the corresponding cubes of graphs where edges $1,2$ are contracted. The automorphism group acts on $\overline{C}_G$ by interchanging $\ell_1 \leftrightarrow \ell_2$ and corresponds to a symmetry along the  dashed red diagonal line. 
\end{ex} 

The topological space $\mathcal{M}_g^{\trop}$ is a cone with cone point indexed by the graph consisting of a single vertex of weight $g$. As a consequence, it is contractible and its topology is uninteresting. For this reason we prefer to work with the associated link.
\subsection{The space $\LM_g^{\trop}$} The link is defined to be 
\[ \LM_g^{\trop}  = \left( \mathcal{M}_g^{\trop} \setminus \hbox{cone point}\right) / \RR^{\times}_{>0} \ .   \]  
It may be described directly by gluing together simplices rather than cones. Let 
\[ \sigma_G = \{ (\ell_e)_{e\in E_G} \in \RR_{>0}^{E_G} \hbox{ such that } \sum_{e \in E_G} \ell_e = 1\  \} \ .  \] 
It consists of all possible non-zero edge lengths on $G$ which are normalised so that they sum to $1$.
Denote its closure by 
\[ \overline{\sigma}_G =    \{  \ell_e \geq 0  \hbox{ such that } \sum_{e \in E_G} \ell_e = 1\} \ .  \] 
We have
\[ \LM_g^{\trop} =  \left( \bigcup_{G} \sigma_G \right) / \sim \] 
where the relation $\sim$ is as before: namely we identify metric  graphs with an edge $e$ of length $0$ with the corresponding quotient $G/e$, and take the  quotient by automorphisms. 

\begin{ex} We may give a complete description of $\LM_2^{\trop}$ as follows.

Consider first the sunrise diagram $G$ with edge lengths $(\ell_1,\ell_2,\ell_3)$.  Its cell $\overline{\sigma}_G$ is the  closed 2-simplex where $\ell_1+\ell_2+\ell_3=1$, and is depicted below: 
\begin{figure}[h]
\begin{tikzpicture} 
\draw[blue, ultra thick] (0,0) -- (4,0) -- (2,3.5) -- cycle;
\filldraw[blue] (0,0) circle (3pt);\filldraw[blue] (4,0) circle (3pt);\filldraw[blue] (2,3.5) circle (3pt);

\draw[red,thick] (-0.2,-0.4) circle (6pt);\filldraw[red] (-0.4,-0.4) circle (1.5pt) node[anchor=east]{\tiny{$1$}};
\node at (0.2,-0.4) {\tiny{$\ell_3$}} ; 
\node at (1.4,-0.4) {\tiny{$\ell_2$}} ; 
\node at (2.6,-0.4) {\tiny{$\ell_3$}} ; 
\node at (4.8,-0.4) {\tiny{$\ell_2$}} ;

\node at (0.2,2.4) {\tiny{$\ell_1$}} ; \node at (0.8,2.4) {\tiny{$\ell_3$}} ; 
\node at (3.2,2.4) {\tiny{$\ell_1$}} ; \node at (3.8,2.4) {\tiny{$\ell_2$}} ; 
\node at (2,4.4) {\tiny{$\ell_1$}} ;
\draw[red,thick] (4.8-0.4,-0.4) circle (6pt);\filldraw[red] (4.8-0.6,-0.4) circle (1.5pt) node[anchor=east]{\tiny{$1$}};

\draw[red,thick] (2.5-0.4,4.4-0.4) circle (6pt);\filldraw[red] (2.5-0.6,4.4-0.4) circle (1.5pt) node[anchor=east]{\tiny{$1$}};

%double rose
\draw[red,thick] (2.8-1,-0.4) circle (6pt);\draw[red,thick] (2.8-0.6,-0.4) circle (6pt);\filldraw[red] (2.8-0.8,-0.4) circle (1.5pt);

\draw[red,thick] (1.3-1,2) circle (6pt);\draw[red,thick] (1.3-0.6,2) circle (6pt);\filldraw[red] (1.3-0.8,2) circle (1.5pt);

\draw[red,thick] (4.3-1,2) circle (6pt);\draw[red,thick] (4.3-0.6,2) circle (6pt);\filldraw[red] (4.3-0.8,2) circle (1.5pt);

%sunrise
\draw[red,thick] (2,1.4) circle (8pt);
\draw[red, thick] (2-0.3,1.4) -- (2+0.3,1.4);
\filldraw[red] (2-0.275,1.4) circle (1.5pt);\filldraw[red] (2+0.275,1.4) circle (1.5pt);

  \coordinate (A) at (0+6,0);
  \coordinate (B) at (4+6,0); % reduced from 6 to 2/3
  \coordinate (C) at (2+6,3.464); % reduced from 3 & 5.196 to 2/3
  
  % Draw the thick blue equilateral triangle
  \draw[ultra thick, blue] (A) -- (B) -- (C) -- cycle;
  
  % Midpoints for barycentric subdivision
  \coordinate (D) at ($(A)!0.5!(B)$);
  \coordinate (E) at ($(B)!0.5!(C)$);
  \coordinate (F) at ($(C)!0.5!(A)$);

  % Draw red dashed lines for the barycentric subdivision
  \draw[blue, dashed] (A) -- (E);
  \draw[blue, dashed] (B) -- (F);
  \draw[blue, dashed] (C) -- (D);

\end{tikzpicture}
\end{figure}

\noindent 
Its automorphism group is  $\Sigma_2\times \Sigma_3$, the group $\mathrm{Aut}'(G) = \Sigma_3$  acts on $\overline{\sigma}_G$ by permuting the coordinates $(\ell_1,\ell_2,\ell_3)$.  The quotient  $\overline{\sigma}_G/ \Sigma_3$  of this simplex by the action of the symmetric group is depicted on the right: a fundamental region for the action of $\Sigma_3$ is given by any of the 6 small triangles in the subdivison.

Now consider the cell $\overline{\sigma}_G$ where $G$ is the dumbbell diagram, whose automorphism group is of order $8$. 
\begin{figure}[h!]
\begin{tikzpicture}
\draw[blue, ultra thick] (7,0) -- (11,0) -- (9,3.5) -- cycle;
\filldraw[blue] (7,0) circle (3pt);\filldraw[blue] (11,0) circle (3pt);\filldraw[blue] (9,3.5) circle (3pt);

\draw[red,thick] (7.2,2) circle (6pt);
\draw[red, thick] (7.2+0.2,2) -- (7.2+0.5,2);
\filldraw[red] (7.2+0.2,2) circle (1.5pt);\filldraw[red] (7.2+0.5,2) circle (1.5pt) node[anchor=north]{\tiny{$1$}};

\node at (6.5,-0.4) {\tiny{$\ell_1$}} ; 
\node at (8.4,-0.4) {\tiny{$\ell_1$}} ; \node at (9.6,-0.4) {\tiny{$\ell_3$}} ; 
\node at (11.7,-0.4) {\tiny{$\ell_3$}} ;

\node at (7.2,2.4) {\tiny{$\ell_1$}} ; \node at (7.6,2.2) {\tiny{$\ell_2$}} ; 
\node at (10.7,2.4) {\tiny{$\ell_3$}} ; \node at (10.3,2.2) {\tiny{$\ell_2$}} ;

\node at (9,4.2) {\tiny{$\ell_2$}} ;

\node at (9,1.6) {\tiny{$\ell_2$}} ; 
\node at (8.4,1.8) {\tiny{$\ell_1$}} ; 
\node at (9.6,1.8) {\tiny{$\ell_3$}} ; 

\draw[red,thick] (11.7-0.4,-0.4) circle (6pt);\filldraw[red] (11.7-0.6,-0.4) circle (1.5pt) node[anchor=east]{\tiny{$1$}};

\draw[red,thick] (10+0.7,2) circle (6pt);
\draw[red, thick] (10+0.2,2) -- (10+0.5,2);
\filldraw[red] (10+0.2,2) circle (1.5pt) node[anchor=north]{\tiny{$1$}};\filldraw[red] (10+0.5,2) circle (1.5pt) ;

\draw[red,thick] (7.7-0.8,-0.4) circle (6pt);\filldraw[red] (7.7-0.6,-0.4) circle (1.5pt) node[anchor=west]{\tiny{$1$}};

\draw[red,thick] (9.8-1,-0.4) circle (6pt);\draw[red,thick] (9.8-0.6,-0.4) circle (6pt);\filldraw[red] (9.8-0.8,-0.4) circle (1.5pt);

\draw[red, thick] (8.75,4) -- (9.25,4);
\filldraw[red] (9.25,4) circle (1.5pt) node[anchor=west]{\tiny{$1$}};
\filldraw[red] (8.75,4) circle (1.5pt) node[anchor=east]{\tiny{$1$}};

%dumbell
\draw[red, thick] (8.7,1.4) -- (9.3,1.4);
\filldraw[red] (8.7,1.4) circle (1.5pt);\filldraw[red] (9.3,1.4) circle (1.5pt);
\draw[red,thick] (8.5,1.4) circle (6pt);
\draw[red,thick] (9.5,1.4) circle (6pt);

  \coordinate (A) at (0+14,0);
  \coordinate (B) at (4+14,0); % reduced from 6 to 2/3
  \coordinate (C) at (2+14,3.464); % reduced from 3 & 5.196 to 2/3
  
  % Draw the thick blue equilateral triangle
  \draw[ultra thick, blue] (A) -- (B) -- (C) -- cycle;
  
  % Midpoints for barycentric subdivision
  \coordinate (D) at ($(A)!0.5!(B)$);
  \coordinate (E) at ($(B)!0.5!(C)$);
  \coordinate (F) at ($(C)!0.5!(A)$);

  % Draw red dashed lines for the barycentric subdivision
%  \draw[blue, dashed] (A) -- (E);
 % \draw[blue, dashed] (B) -- (F);
  \draw[blue, dashed] (C) -- (D);
\end{tikzpicture}
  
\end{figure}
The image of the automorphism group  $\mathrm{Aut}'(G)$  is of order $2$, and it acts on $\overline{\sigma}_{G}$ by interchanging the coordinates $\ell_1$ and $\ell_3$.

The space $\LM_2^{\trop}$ is obtained by gluing the quotients of the simplices associated to these two graphs by their automorphism groups, along all edges and vertices which correspond to isomorphic metric graphs (which have equivalent labels in the figures shown). 
\end{ex} 
What kind of  objects are the spaces $\mathcal{M}_g^{\trop}$ and $\LM_g^{\trop}$?  They are \emph{a priori}  topological spaces but carry additional structure since they are locally built out of quotients of Euclidean spaces (respectively,  simplices).  Note that they  are not   manifolds since they may locally  resemble the pages of a book,  joined  together along the spine (see \cite{WhatIsOuterSpace}). They also have an `orbifold' nature along the fixed points for the action of  $\mathrm{Aut}(G)$ on $\sigma_G$.  In early references, such objects were called `stacky fans', but  the precise category in which one chooses to view them, and the corresponding terminology, continues to evolve.
 
\subsection{Open weight zero locus, and boundary}

\begin{defn}
Define  the \emph{boundary}  of $\LM_g^{\trop}$ to be the space: 
\[  \partial \LM_g^{\trop} = \left(\bigcup_{G, w(G)>0}  \overline{\sigma}_G \right) / \sim \]
where the sum is over all stable, weighted  graphs  (combinatorial types) of genus $g$ and total positive weight, where the total weight of a graph $G$ is defined by:
\[ w(G) = \sum_{v\in V_G} w(v) \ .\]
In other words, $ \partial \LM_g^{\trop} \subset   \LM_g^{\trop} $ is the subspace indexed by graphs which have at least one vertex of weight $>0$.
 
 Define the weight zero locus to be the open complement
 \[   \LM_g^{\circ, \trop}  = \LM_g^{\trop} \setminus  \partial \LM_g^{\trop} \ .\]
 It is the open locus $ \LM_g^{\circ, \trop}  \subset \LM_g^{\trop} $ consisting of the images of the \emph{open} simplices $\sigma_G$ for all graphs $G$ which have total weight $w(G)=0$.
 \end{defn}

The open locus $\LM_g^{\circ, \trop}$ also goes by the name of the quotient of \emph{outer space} \cite{CullerVogtmann}  $\mathcal{O}_g$ by the action  of the outer automorphisms of the free group on $g$ generators:
\[   \LM_g^{\circ, \trop} = \mathcal{O}_g/ \mathrm{Out}(F_g) \ .\]
It is shown in \cite{CGP} that the boundary $\partial \LM_g^{\trop}$ is contractible.

\begin{figure}[h]
\begin{tikzpicture} 
\draw[blue, ultra thick] (0,0) -- (4,0) -- (2,3.5) -- cycle;
\filldraw[white] (0,0) circle (3pt);\filldraw[white] (4,0) circle (3pt);\filldraw[white] (2,3.5) circle (3pt);

\node at (1.4,-0.4) {\tiny{$\ell_2$}} ; 
\node at (2.6,-0.4) {\tiny{$\ell_3$}} ;

\node at (0.2,2.4) {\tiny{$\ell_1$}} ; \node at (0.8,2.4) {\tiny{$\ell_3$}} ; 
\node at (3.2,2.4) {\tiny{$\ell_1$}} ; \node at (3.8,2.4) {\tiny{$\ell_2$}} ;

%double rose
\draw[red,thick] (2.8-1,-0.4) circle (6pt);\draw[red,thick] (2.8-0.6,-0.4) circle (6pt);\filldraw[red] (2.8-0.8,-0.4) circle (1.5pt);

\draw[red,thick] (1.3-1,2) circle (6pt);\draw[red,thick] (1.3-0.6,2) circle (6pt);\filldraw[red] (1.3-0.8,2) circle (1.5pt);

\draw[red,thick] (4.3-1,2) circle (6pt);\draw[red,thick] (4.3-0.6,2) circle (6pt);\filldraw[red] (4.3-0.8,2) circle (1.5pt);

%sunrise
\draw[red,thick] (2,1.4) circle (8pt);
\draw[red, thick] (2-0.3,1.4) -- (2+0.3,1.4);
\filldraw[red] (2-0.275,1.4) circle (1.5pt);\filldraw[red] (2+0.275,1.4) circle (1.5pt);

\end{tikzpicture}
\caption{The part of the  open locus $\LM_2^{\circ,\trop}$ corresponding to the sunrise graph is the quotient of the above figure, which depicts a closed triangle minus its three corners, by the action of the symmetric group $\Sigma_3$. The three corners of the triangle map to a single point in $\partial \LM_2^{\trop}$.  } \label{fig: OPENsunrisesimplex}
\end{figure}
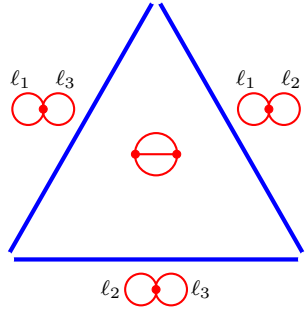

\subsection{*Graph polynomial vanishing locus} The space $\LM_g^{\trop}$ is closely related to the geometry of Feynman integrals.
First of all, observe that there is a canonical isomorphism:
\[  \sigma_G \cong \{ (x_e)_{e \in E_G} \subset \PP^{E_G}(\RR): x_e >0 \}\]
 between the  simplices $\sigma_G$ whose points are metric graphs with normalised edge lengths defined in this lecture, and the coordinate simplices in projective space
considered in the first lecture. The isomorphism is given by $\ell_e = x_e/(\sum_{e\in E_G} x_e)$ and  extends to closed simplices.  This  shows that the domain of Feynman integration may be naturally identified with the moduli space of metric graphs of fixed combinatorial type.

For any  unweighted metric graph $G$   (i.e., all vertex weights are zero) we shall denote by $\Lambda_G$ a choice of graph Laplacian matrix in which the edge variables $x_e$ are replaced with the lengths $\ell_e$ of each edge $e\in E_G$. Thus, for an unweighted metric graph, $\Lambda_G$ is a symmetric matrix with \emph{real entries}. Its determinant $\Psi_G= \det \Lambda_G$ is a real number.    

\begin{lem} The graph polynomial  $\Psi_G$  vanishes precisely along the part of the boundary of $\overline{\sigma}_G$  in 
  $\PP^{E_G}(\RR)$ which consists of  faces  $\overline{\sigma}_{G'}$   indexed by graphs   $G'$ of positive weight, where $G'$ is  obtained from $G$ by contracting edges.
\end{lem}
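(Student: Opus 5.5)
The plan is to work directly from the spanning-tree expansion of $\Psi_G$, using the positivity of every coefficient. Each point of $\overline{\sigma}_G$ has non-negative coordinates $x_e$, not all zero. Such a point lies in the relative interior of a unique closed face, namely the face where $x_e=0$ exactly for $e\in S$, for some subset $S\subsetneq E_G$. In the gluing that defines $\LM_g^{\trop}$, this face is identified with $\overline{\sigma}_{G/S}$, where $G/S$ is the weighted graph obtained by contracting the edges in $S$. The claim therefore reduces to showing
\[ \Psi_G(x)=0 \iff w(G/S)>0 \]
for every point $x$ whose zero set is exactly $S$.

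For the first step, I restrict to such a point. Every monomial $\prod_{e\notin T}x_e$ has coefficient $+1$, and every $x_e$ with $e\notin S$ is strictly positive. So $\Psi_G(x)=0$ holds if and only if every monomial vanishes. A monomial vanishes exactly when its complement $E_G\setminus T$ meets $S$. Hence $\Psi_G(x)\neq 0$ if and only if there is a spanning tree $T$ with $S\subseteq T$. By the standard fact that an edge set extends to a spanning tree precisely when it is a forest, this happens if and only if the subgraph $\gamma_S$ with edge set $S$ has $h_{\gamma_S}=0$.

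The second step compares this with the weight of $G/S$. I take $G$ unweighted, as the paper does when it introduces $\Lambda_G$. Contracting edges one at a time, the weighting rule increases the total weight by one exactly when the contracted edge has become a self-edge at that stage. Because genus is preserved under contraction, the total weight of $G/S$ equals $h_G-h_{G/S}$. The exact sequence for contracting a subgraph then gives $h_G-h_{G/S}=h_{\gamma_S}$. Putting the two steps together, $w(G/S)>0$ if and only if $\gamma_S$ contains a cycle, which holds if and only if $\Psi_G(x)=0$.

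As a consistency check, for the $n$-gon the polynomial $\Psi_G=\sum x_e$ never vanishes on $\overline{\sigma}_G$. Correspondingly, only the full contraction of all $n$ edges produces positive weight, and that point is excluded from the simplex.

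The main obstacle is bookkeeping rather than mathematics: matching "faces of $\overline{\sigma}_G$ in $\PP^{E_G}(\RR)$" with the graphs $G'=G/S$ carrying the induced weighting. In particular, I need to justify $w(G/S)=h_{\gamma_S}$ when contraction is done edge by edge. An induction on $|S|$ handles this: contracting a non-loop edge leaves the Betti number of the contracted subgraph unchanged, while contracting a self-edge lowers it by one and adds one to the weight.
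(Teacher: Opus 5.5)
Your argument is correct, but it is organised differently from the paper's. Both proofs start from the positivity of the coefficients of $\Psi_G=\sum_T\prod_{e\notin T}x_e$. The paper then argues recursively. It uses contraction--deletion in the form $\Psi_G|_{x_e=0}=\Psi_{G\q e}$ to pass from $\overline{\sigma}_G$ to the face $\overline{\sigma}_{G/e}$, reruns the positivity argument there, and notes that the polynomial becomes identically zero exactly when a self-edge is contracted, i.e.\ exactly when weight is created.

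You replace this recursion by a single pointwise criterion: $\Psi_G(x)=0$ iff the zero set $S$ of $x$ lies in no spanning tree, iff $S$ contains a cycle. You pair this with the closed formula $w(G/S)=h_G-h_{G/S}=h_{\gamma_S}$.

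The paper's route uses only the contraction property it stresses right after the lemma, so it meshes directly with the face identifications in $\LM_g^{\trop}$. However, its converse direction, and the equivalence between ``obtained by contracting loops'' and ``positive total weight'', are only sketched there. Your route borrows the standard fact that forests extend to spanning trees. In return it proves both directions at once and makes the weight bookkeeping explicit.

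Your criterion also shows immediately that the vanishing set is a union of \emph{closed} faces, because containing a cycle is preserved under enlarging $S$. Your reduction to relative interiors quietly relies on this monotonicity, so it deserves one explicit sentence.
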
 

\begin{proof} By definition 
\[ \Psi_G = \sum_T \prod_{e\notin T} x_e\]
 is a sum of monomials  with positive coefficients.  Every connected graph has at least one spanning tree, and so $\Psi_G\neq 0$. Since $x_e\geq 0$ on the simplex $\overline{\sigma}_G$, it follows that $\Psi_G\geq0$ on $\overline{\sigma}_G$ and vanishes if and only if every monomial vanishes, which can only happen if at least one $x_e$ vanishes.  By contraction-deletion,  $\Psi_G\big|_{x_e=0} = \Psi_{G\q e}$ and we may repeat the argument with $G$ replaced by $G\q e$ if the latter is non-empty, which happens if and only if $e$ is not a self-edge and hence $G\q e= G/e$ has  total weight zero.  We have shown that $\Psi_G>0$ on the locus consisting of graphs of zero total weight.   Conversely, by contraction-deletion again, one sees that $\Psi_G$ vanishes precisely along those faces which  are obtained by  contracting loops, or equivalently, those  which have positive total weight. 
\end{proof}

Note that both the simplices and graph polynomial have the contraction property:
\begin{eqnarray}
\overline{\sigma}_G \cap \{x_e=0\}  & = &  \overline{\sigma}_{G/e}  \nonumber \\
 \Psi_G \Big|_{x_e=0}  & = &  \Psi_{G\q e}  \nonumber
\end{eqnarray} 
\begin{ex} Consider the  sunrise graph of example \ref{ex: sunrise}.  The vanishing locus of $\Psi_G$ is  the quadric in projective space 
\[Q=\{ (x_1:x_2:x_3) \in \PP^2  :  x_1x_2+x_1x_3+x_2x_3=0\} \ . \]
It meets the closed coordinate simplex $\overline{\sigma}_G$ in exactly 3 points, namely the three vertices $(1:0:0)$, $(0:1:0)$, $(0:0:1)$  of the triangle in figure \ref{fig3}. These  correspond to setting two out of three of the edge lengths to zero (i.e., contracting two edges). 
\end{ex}

\begin{figure}[h]
\begin{tikzpicture}
 \draw[blue,thick] (-2.2,-2) -- (0.3,2.5);
  \draw[blue,thick] (-0.3,2.5) -- (2.2,-2);
   \draw[blue,thick] (-2.4,-1.1) -- (2.4,-1.1);
 \draw[red, very thick] (0,-0.05) circle (2cm); 
 \node[red] at (2.2,1.5) {$\Psi_G=0$};
  \node[blue] at (-3.1,-1.15) {$x_1=0$};
  \node[blue] at (1,2.4) {$x_2=0$};\node[blue] at (2.9,-1.9) {$x_3=0$};
  \node at (0,0) {$\sigma_G$};
\end{tikzpicture}
\caption{A picture of the geometry associated to the sunrise graph $G$. The graph hypersurface $V(\Psi_G)$ in $\mathbb{P}^2$ is depicted in red; the coordinate hyperplanes in blue. The domain of integration $\sigma_G$ is the coordinate simplex which meets the graph hypersurface in the three corners. By identifying the projective simplex $\{(x_1: x_2: x_3) : x_i \geq 0\}$ with  the region in real space $\{(\ell_1,\ell_2,\ell_3): \sum_{i=1}^3 \ell_i =1\}$ , one sees that the relative closure of the domain of integration in $\mathbb{P}^2 \setminus V(\Psi_G)$ may be identified with the region of figure \ref{fig: OPENsunrisesimplex}  (before taking the quotient by graph automorphisms).   }\label{fig3} 
\end{figure}
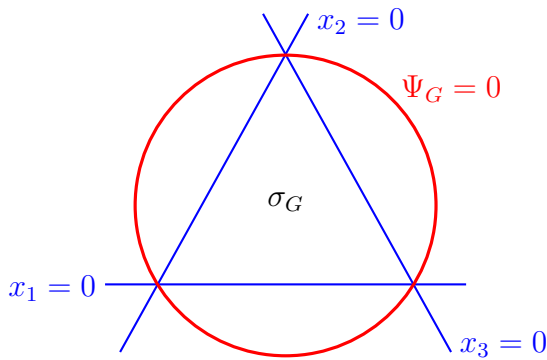

It follows that the open locus $\LM_g^{\circ, \trop}$ is precisely the region where the graph polynomial is strictly positive, while its  boundary $\partial \LM_g^{\trop}$  corresponds precisely to the vanishing locus of $\Psi_G$  (although the graph polynomial itself is only defined  up to scalar multiplication by $\RR_{>0}^{\times}$ on the link $\LM_g^{\trop}$, its (non)-vanishing locus makes sense). Thus   the boundary strata correspond geometrically to degenerations of metric graphs obtained by contracting edges, and analytically to  the regions where the integrand of Feynman integrals (and canonical integrals to be discussed below) develop singularities.

\subsection{**Description as an inductive limit}
The moduli space of tropical curves, and its link, have a very simple description as an inductive limit of topological spaces. 

Let $I_g$ denote the category whose objects are isomorphism classes of stable weighted (combinatorial) graphs of genus $g$ with at least one edge, with morphisms generated by isomorphisms, and contractions of edges. Let $I_g^{\mathrm{opp}}$ denote the category with the same objects, but all morphisms reversed. There is a functor from 
$I_g^{\mathrm{opp}}$ to topological spaces which sends a stable weighted graph $G$ to the closed simplex $\overline{\sigma}_G$. Then 
\[ \LM_g^{\trop} = \varinjlim_{G \in I_g^{\mathrm{opp}}} \overline{\sigma}_G \ .  \]

\section{Graph complex and Grothendieck-Teichm\"uller Lie algebra}

In this lecture, we shall discuss the commutative, even graph complex.  It provides a combinatorial model for the topology of the moduli space of tropical curves, and will allow us to relate the geometry developed in the previous section to the algebraic structures governing multiple zeta values.

We first discuss graph homology and  then  review its relationship to the Grothendieck-Teichm\"uller Lie algebra and the structure of the ring of  MZV's.

\subsection{Graph homology}
Let $G$ be a finite connected graph. In the literature, it is usual to assume that $G$ contains no self-edges and has no vertices of degree $\leq 2$. There are many variants,  which in most cases lead to more or less the same graph homology. 

An \emph{orientation} on  a graph $G$ in this context\footnote{not to be confused with the earlier notion of an orientation on each edge of $G$} is the sign of an ordering on the set of edges of $G$. More precisely, it is an element
\[  \varpi = e_1 \wedge \ldots \wedge e_{n}  \quad  \in \quad  \left( \textstyle{\bigwedge}^n  \, \ZZ^{E_G}\right)^{\times}\hookrightarrow \textstyle{\bigwedge}^n \,\ZZ^{E_G} \]
where the edges $E_G$ of $G$ are ordered from $1$ to $n=|E_G|$. Interchanging  two edges in this edge-ordering results in the opposite sign. An oriented graph is the data $(G,\varpi)$. 

Let $\GC_2$ denote the $\QQ$-vector space spanned by pairs $(G,\varpi)$ modulo the relations:
\begin{itemize}
\item $(G, \varpi)  =   - (G, -\varpi) $ 
\item For any isomorphism $\tau: G \cong G'$, we have 
\[  (G, \varpi) =  (G', \tau \varpi)\]
where $\tau \varpi$ is the orientation on $G'$ induced by $\tau: E_G \cong E_{G'}$.  
\end{itemize} 
Let us denote the equivalence class of $(G,\varpi)$ by $[G,\varpi]$. 
For every integer $N$, there exists a graph complex denoted by $\GC_N$.  Only the parity of $N$ significantly changes the complex, and thus one speaks of the `odd' or `even' graph complex depending on whether $N$ is odd or even. The odd graph complex will not be considered here but looks very different from the even one since the corresponding notion of orientation is quite different.  

When drawing examples, we will typically not specify the orientation: some choice of orientation will be implicitly assumed.

\begin{ex} Consider any graph which contains a bubble (two edges $1,2$ which share the  same two endpoints). Then by interchanging edges $1,2$
we have:
    
\begin{figure}[h]\begin{tikzpicture}
 
  % Original picture
  \draw[blue,thick] (0,0) .. controls (0.333,0.4) and (0.666,0.4) .. (1,0);
  \draw[blue,thick] (0,0) .. controls (0.333,-0.4) and (0.666,-0.4) .. (1,0);
  \filldraw[blue] (0,0) circle (1.5pt);
  \filldraw[blue] (1,0) circle (1.5pt);
  \draw[blue,thick] (0,0) -- (-0.1,-0.1);
  \draw[blue,thick] (0,0) -- (-0.1,0);
  \draw[blue,thick] (0,0) -- (-0.1,0.1);
  \draw[blue,thick] (1,0) -- (1.1,-0.1);
  \draw[blue,thick] (1,0) -- (1.1,0);
  \draw[blue,thick] (1,0) -- (1.1,0.1);
  \node at (0.5,0.5) {{\tiny $1$}};
  \node at (0.5,-0.5) {{\tiny $2$}};
  \node at (2.3,0) {$\, ,\, e_1 \wedge e_2 \wedge \omega\Big)$};
  \node at (-0.3,0) {$\Big($};

  % Duplicated picture moved  to the right
  \begin{scope}[shift={(4.7,0)}]
    \draw[blue,thick] (0,0) .. controls (0.333,0.4) and (0.666,0.4) .. (1,0);
    \draw[blue,thick] (0,0) .. controls (0.333,-0.4) and (0.666,-0.4) .. (1,0);
    \filldraw[blue] (0,0) circle (1.5pt);
    \filldraw[blue] (1,0) circle (1.5pt);
    \draw[blue,thick] (0,0) -- (-0.1,-0.1);
    \draw[blue,thick] (0,0) -- (-0.1,0);
    \draw[blue,thick] (0,0) -- (-0.1,0.1);
    \draw[blue,thick] (1,0) -- (1.1,-0.1);
    \draw[blue,thick] (1,0) -- (1.1,0);
    \draw[blue,thick] (1,0) -- (1.1,0.1);
    \node at (0.5,0.5) {{\tiny $2$}};
    \node at (0.5,-0.5) {{\tiny $1$}};
    \node at (2.3,0) {$\, ,  \, e_2 \wedge e_1 \wedge \omega\Big)$};
    \node at (-0.6,0) {$=\,\, \,  \, \Big($};
  \end{scope}

    \end{tikzpicture}
    \end{figure}
    \noindent 
Since $e_1 \wedge e_2= - e_2 \wedge e_1$, it follows that  
$ {[}G,\varpi]=0$ 
whenever $G$ has multiple edges, since there exists a symmetry which permutes any two edges with common endpoints.
\end{ex}

The vector space $\GC_2$ is equipped with a differential. Consider the linear map defined on generators by the formula
\begin{eqnarray} 
d: \GC_2 & \To & \GC_2 \nonumber \\ 
d {[} G, e_1\wedge \ldots \wedge e_n] & = & \sum_{i=1}^n (-1)^i  {[} G\q e_i , e_1 \wedge \ldots \wedge \widehat{e_i} \wedge \ldots \wedge e_n{]} 
 ,   \nonumber
\end{eqnarray} 
which contracts each edge of $G$ in turn. One may check that the map $d$ is well-defined and satisfies $d^2=0$, which follows from the sign rule in the previous formula together with the fact that contraction of edges commute: the graph obtained by contracting $e_i$ followed by $e_j$ in $G$ is isomorphic to the one where $e_j$  is contracted before  $e_i$. 

\begin{ex}
The contraction of any edge which lies in a triangle is zero. Consider the differential on any oriented graph which contains a triangle, as shown below.  

\begin{figure}[h]
\begin{tikzpicture}
 \node at (-0.35,0.3) { $d \Big[$}; 

  \begin{scope}[scale=1/3] % Scale the entire figure by 1/3
    % Define triangle size and style
    \tikzset{triangle/.style={draw=blue, thick, fill=none}}
    \tikzset{vertex/.style={circle, draw=blue, fill=blue, inner sep=0pt, minimum size=4pt}}
    % Define the points of the triangle
    \coordinate (A) at (0, 0);
    \coordinate (B) at (2, 0);
    \coordinate (C) at (1, 1.732);  % Height is sqrt(3) for equilateral triangle of side length 2
    % Draw the triangle
    \draw[triangle] (A) -- (B) -- (C) -- cycle;
    % Draw the vertices
    \node[vertex] at (A) {};
    \node[vertex] at (B) {};
    \node[vertex] at (C) {};
    \draw[blue, thick] (A) -- (-0.4,-0.1); \draw[blue, thick] (A) -- (-0.1,-0.4);
    \draw[blue, thick] (B) -- (2.4,-0.1); \draw[blue, thick] (B) -- (1.9,-0.4);
    \draw[blue, thick] (C) -- (1.2,2.1); \draw[blue, thick] (C) -- (0.8,2.1);
    \node at (1,-0.5) {\tiny{$3$}};
     \node at (0.2,1.1) {\tiny{$1$}};
      \node at (1.8,1.1) {\tiny{$2$}};
  \end{scope}
  \node at (2.35,0.3) { $ , \,\, e_1 \wedge e_2 \wedge e_3 \wedge \omega \Big] $}; 
\end{tikzpicture}
\end{figure}

\begin{figure}[h]
\begin{tikzpicture}
 \begin{scope}[shift={(4.7,0.3)}]
    \draw[blue,thick] (0,0) .. controls (0.333,0.4) and (0.666,0.4) .. (1,0);
    \draw[blue,thick] (0,0) .. controls (0.333,-0.4) and (0.666,-0.4) .. (1,0);
    \filldraw[blue] (0,0) circle (1.5pt);
    \filldraw[blue] (1,0) circle (1.5pt);
    \draw[blue,thick] (0,0) -- (-0.1,-0.1);
    \draw[blue,thick] (0,0) -- (-0.1,0);
    \draw[blue,thick] (0,0) -- (-0.1,0.1);
    \draw[blue,thick] (1,0) -- (1.1,-0.1);
    \draw[blue,thick] (1,0) -- (1.1,0);
    \draw[blue,thick] (1,0) -- (1.1,0.1);
    \node at (0.5,0.4) {{\tiny $2$}};
    \node at (0.5,-0.4) {{\tiny $3$}};
    \node at (2.3,0) {$  \, \, ,  e_2 \wedge e_3 \wedge \omega\Big]$};
    \node at (-0.8,0) {$ \,\,= -  \Big[$};
  \end{scope}
   \begin{scope}[shift={(9.1,0.3)}]
    \draw[blue,thick] (0,0) .. controls (0.333,0.4) and (0.666,0.4) .. (1,0);
    \draw[blue,thick] (0,0) .. controls (0.333,-0.4) and (0.666,-0.4) .. (1,0);
    \filldraw[blue] (0,0) circle (1.5pt);
    \filldraw[blue] (1,0) circle (1.5pt);
    \draw[blue,thick] (0,0) -- (-0.1,-0.1);
    \draw[blue,thick] (0,0) -- (-0.1,0);
    \draw[blue,thick] (0,0) -- (-0.1,0.1);
    \draw[blue,thick] (1,0) -- (1.1,-0.1);
    \draw[blue,thick] (1,0) -- (1.1,0);
    \draw[blue,thick] (1,0) -- (1.1,0.1);
    \node at (0.5,0.4) {{\tiny $1$}};
    \node at (0.5,-0.4) {{\tiny $3$}};
    \node at (2.3,0) {$  \, \, ,  e_1 \wedge e_3 \wedge \omega\Big]$};
    \node at (-0.6,0) {$ \,\, +  \Big[$};
  \end{scope}
  \begin{scope}[shift={(13.5,0.3)}]
    \draw[blue,thick] (0,0) .. controls (0.333,0.4) and (0.666,0.4) .. (1,0);
    \draw[blue,thick] (0,0) .. controls (0.333,-0.4) and (0.666,-0.4) .. (1,0);
    \filldraw[blue] (0,0) circle (1.5pt);
    \filldraw[blue] (1,0) circle (1.5pt);
    \draw[blue,thick] (0,0) -- (-0.1,-0.1);
    \draw[blue,thick] (0,0) -- (-0.1,0);
    \draw[blue,thick] (0,0) -- (-0.1,0.1);
    \draw[blue,thick] (1,0) -- (1.1,-0.1);
    \draw[blue,thick] (1,0) -- (1.1,0);
    \draw[blue,thick] (1,0) -- (1.1,0.1);
    \node at (0.5,0.4) {{\tiny $1$}};
    \node at (0.5,-0.4) {{\tiny $2$}};
    \node at (2.9,0) {$  \, \, ,  e_1 \wedge e_2 \wedge \omega\Big] + \ldots $};
    \node at (-0.6,0) {$ \,\, -  \Big[$};
  \end{scope}
\end{tikzpicture}
\[ = \qquad  0\qquad  +\qquad  0\qquad  +\qquad  0 \qquad + \qquad \cdots \qquad\qquad  \]
 \end{figure}
\noindent 

The terms corresponding to the contraction of edges $1,2,3$ give rise to graphs with double edges,   which vanish by the computation above. 

\end{ex} 

\begin{defn} Graph homology is defined to be the (infinite dimensional) vector space
\[ H(\GC_2) = \frac{\ker d}{\mathrm{Im} \, d}  \ .\]
The vector space $\GC_2$ is bigraded by the number of loops, and by the \emph{degree}\footnote{$\GC_N$ has degree $ |E_G|- N h_G$ and the complexes $\GC_N$ are essentially equivalent for all even $N$.    }
\[ \deg(G) =   |E_G| -  2 h_G \ .\]
The differential respects the number of loops, but decreases the degree by $1$. Thus $H(\GC_2)$ may be decomposed into homology groups of fixed degree
\[ H(\GC_2) = \bigoplus_{n}  H_n(\GC_2) \]
and each $H_n(\GC_2)$ in turn decomposes according to loop number  $ H_n(\GC_2) = \bigoplus_{g\geq 0}  H_n(\GC^{(g)}_2)$. 
\end{defn}
Each component $ H_n(\GC^{(g)}_2)$ is a finite-dimensional $\QQ$-vector space.

\begin{ex} Consider the  graph with 11 edges and 7 vertices depicted below on the left. The following calculation of the differential is valid for some appropriate choice of orientations on the corresponding graphs, which are omitted.

 \begin{figure}[h]
\begin{tikzpicture}

\node at (-1.3,0) {$d$}; 
  % Define the positions of the vertices
  \coordinate (A) at (-1, 0);
  \coordinate (B) at (0, 0);
  \coordinate (C) at (0, 1);
  \coordinate (D) at (0, -1);
  \coordinate (E) at (0.7, 0);
  \coordinate (F) at (1, 1);
  \coordinate (G) at (1, -1);

  % Draw the curved edges
  \draw[blue, thick, bend left=0] (A) to (B);
  \draw[blue, thick, bend left=40] (A) to (C);
  \draw[blue, thick,bend right=40] (A) to (D);
\draw[blue, thick,bend right=5] (B) to (C);
  \draw[blue, thick,bend left=5] (B) to (D);
  \draw[red, thick,bend left=5] (C) to (F);
    \draw[red, thick,bend right=5] (D) to (G);
      \draw[blue, thick,bend right=90] (G) to (F);
    \draw[blue, thick,bend left=5] (G) to (E);   
      \draw[blue, thick,bend right=5] (F) to (E);   
      \draw[red, thick,bend left=0] (E) to (B);   
       % Draw the vertices
  \foreach \coor in {A, B, C, D, E, F, G} {
    \filldraw[blue] (\coor) circle (2pt);
  }

\node at (5.9,0) {$-2$}; 

\begin{scope} [shift={(6.4,-0.3)}]
   % Define the positions of the vertices
  \coordinate (A) at (0, 0);
  \coordinate (B) at (0.5, 0.7);
  \coordinate (C) at (1, 0);
  \coordinate (D) at (1.5, 0.7);
  \coordinate (E) at (2, 0);
  \coordinate (F) at (2.5, 0.7);

  % Draw the curved edges
  \draw[blue, thick, bend left=0] (A) to (B);
  \draw[blue, thick, bend left=0] (B) to (C);
  \draw[blue, thick, bend left=0] (A) to (C);
  \draw[blue, thick, bend left=0] (C) to (D);
  \draw[blue, thick, bend left=0] (B) to (D);
  \draw[blue, thick, bend left=0] (D) to (E);
  \draw[blue, thick, bend left=0] (C) to (E);
   \draw[blue, thick, bend left=0] (E) to (F);
    \draw[blue, thick, bend left=0] (D) to (F);
    \draw[blue, thick, bend left=80] (F) to (A);
       % Draw the vertices
  \foreach \coor in {A, B, C, D, E, F} {
    \filldraw[blue] (\coor) circle (2pt);
  }\end{scope}

\node at (2.3,0) {$=$}; 
  \begin{scope}[shift={(4,0)}];

  % Draw the outer circle
\draw[blue, thick] (0,0) circle(1cm);
\fill[blue,thick] (0,0) circle(1pt);  
% Draw the spokes
\foreach \angle in {0, 72, 144, 216, 288} {
  \draw[blue, thick] (0,0) -- (\angle:1cm);
  \fill[blue,thick] (\angle:1cm) circle(2pt); 
}

  \end{scope}
\end{tikzpicture}
\end{figure}

\noindent 
Since every edge in the left-hand graph lies in a triangle except for the red edges, only contraction of these  three edges gives rise to a non-zero class in the graph complex. Indeed, contracting the red edge in the middle produces a wheel $W_5$; contracting the upper and lower red edges each  gives rise to  a zig-zag $Z_5$. 

\end{ex}
\begin{ex}Consider the  wheel graphs $W_n$ with $n\geq 3$ spokes introduced in the first lecture, \S\ref{sect: ExFeynmanSummary}. Since every edge in a wheel lies in a triangle, we have $d[W_n]=0$. 
\end{ex}

It is an exercise to check that every  wheel graph  $W_{2n}$ with an even number of spokes has an automorphism which induces an odd permutation on its set of edges. 
Consequently, it is  zero in $\GC_2$. 

\begin{thm} \cite{RossiWillwacher} The odd wheel classes are non-zero in graph homology:
\[   0\neq {[} W_{2n+1} ]   \in    H_{0}(\GC^{2n+1}_2) \]
for all $n\geq 1$.
\end{thm}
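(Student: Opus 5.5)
We already know $[W_{2n+1}]$ is a cycle, since every edge of a wheel lies in a triangle and so $d[W_{2n+1}]=0$. What has to be shown is that it is not a boundary, and that the graph itself is not zero in $\GC_2$. The plan is to build a linear functional on the degree-one chains that kills all boundaries but is nonzero on $W_{2n+1}$. Here "degree one" is in the sense of the bigrading above, with $2n+1$ loops. Concretely, I would construct a cocycle $\omega$ in the dual complex $\GC_2^\vee$: a function on oriented graphs, compatible with the orientation signs, with $\omega\circ d=0$ on graphs with $4n+3$ edges and $2n+1$ loops. We then need $\omega(W_{2n+1})\neq 0$. Since $H(\GC_2^{(g)})$ is finite dimensional in each bidegree, such a pairing detects classes exactly.

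The candidate for $\omega$ is a canonical integral of the kind promised in the introduction. For a graph $G$ with $|E_G|=2h_G+\deg$, set
\[ \omega(G)=\int_{\sigma_G}\Phi_G ,\]
where $\Phi_G$ is a projective differential form of degree $|E_G|-1=4n+1$ built from the graph Laplacian $\Lambda_G$. The natural choice is the primitive invariant form $\omega^{4n+1}=\mathrm{tr}\big((\Lambda^{-1}d\Lambda)^{4n+1}\big)$. This form is $\GL_{h}(\ZZ)$-invariant, so it is independent of the choice of cycle basis. It is also compatible with contraction, since $\Lambda_G|_{x_e=0}=\Lambda_{G/e}$, and it is odd under permutations of edges in exactly the way the orientation $\varpi$ is. These properties make $\omega$ well defined on $\GC_2$.

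The cocycle property follows from Stokes' theorem. The form $\omega^{4n+1}$ is closed, so integrating $d\omega^{4n+1}=0$ over $\sigma_G$ for $G$ of degree $+1$ gives a sum of boundary integrals. The facets $x_e=0$ of weight zero give $\sum_i(-1)^i\omega(G/e_i)=\omega(dG)$. Self-edge facets correspond to $G\q e=0$, and we need these faces and the positive-weight strata to contribute nothing.

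This is the main obstacle. The form is singular along $\Psi_G=0$, which by the Lemma on the vanishing locus of $\Psi_G$ is precisely $\partial\LM_g^{\trop}$. So one must show that the integral converges and that the boundary contributions vanish. I would first try a blow-up argument: analyse $\Lambda$ near a face where a subgraph $\gamma$ with $h_\gamma>0$ is contracted, factor $\Lambda$ in block form, and use the fact that a trace of odd powers restricted to the lower-rank block vanishes for degree reasons. Alternatively, one can invoke a Borel–Serre-type compactification in which the form extends.

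Finally, one must check that $\omega(W_{2n+1})\neq0$. The cleanest route is comparison with Feynman residues, via \eqref{eqn: wheelresidues}: I expect $\omega^{4n+1}$ restricted to $\sigma_{W_{2n+1}}$ to equal a nonzero rational multiple of $\Omega_G/\Psi_G^2$ times an explicit numerator. The integral would then be a nonzero rational multiple of $\zeta(2n+1)$, and positivity or numerics settle nonvanishing. As a fallback, one can compute the coefficient by induction on $n$ via the block structure of $\Lambda_{W_{2n+1}}$. Either way this also shows $W_{2n+1}$ is nonzero in $\GC_2$, since an odd edge automorphism would force $\omega=0$.
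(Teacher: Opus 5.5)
Your route is the one the paper explains in its last lecture: pair $W_{2n+1}$ with $G\mapsto I_G(\omega^{4n+1})=\int_{\sigma_G}\omega^{4n+1}_G$, show by Stokes that this kills boundaries (the paper's theorem that $I_\Xi(\omega^{4k+1})\neq 0$ forces $[\Xi]\neq 0$), and show it is nonzero on the wheel (Theorem~\ref{thm43}). It is not Rossi--Willwacher's deformation-quantisation integral. Two small points on the first half:
\begin{itemize}
\item The functional lives on degree-zero chains (graphs with $4n+2$ edges) and must vanish on $d$ of degree-one chains, not the other way round.
\item Your blow-up sketch points the right way, but the mechanism is more specific than ``odd traces on a lower-rank block vanish.'' The form extends to a blown-up simplex, which underlies Theorem~\ref{thm: CanIntisfinite}. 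On an exceptional face $\sigma_\gamma\times\sigma_{G/\gamma}$ it restricts to $\omega_\gamma\otimes 1+1\otimes\omega_{G/\gamma}$. Each summand is a top-degree form pulled back from a factor of smaller dimension, so it vanishes except in degenerate cases. Citing \cite{BrSigma} for this, as the paper does, is acceptable.
\end{itemize}

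The genuine gap is the non-vanishing step, which carries the real content of the theorem. Comparison with Feynman residues via \eqref{eqn: wheelresidues} cannot produce $\zeta(2n+1)$. The residue of $W_{2n+1}$ is $\binom{4n}{2n}\zeta(4n-1)$, of weight $4n-1>2n+1$ once $n\geq 2$. Since $|E_G|=2h_G$, the form $\Omega_G/\Psi_G^2$ already has degree $0$. So if your ``explicit numerator'' is a polynomial, it is a constant, and you are claiming $\omega^{4n+1}_{W_{2n+1}}$ is a multiple of the residue integrand. That is false. The paper computes $\omega^9_{W_5}=18\big(\Omega_{W_5}/\Psi_{W_5}^2+12\,x_1\cdots x_5\,\Omega_{W_5}/\Psi_{W_5}^3\big)$; the two pieces integrate to $70\zeta(7)$ and $70(\zeta(5)-\zeta(7))$, and the $\zeta(7)$ cancels. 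If instead the numerator is a rational function, such as a power of $x_1\cdots x_g/\Psi_G$, then \eqref{eqn: wheelresidues} tells you nothing about its integral.

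What is actually needed is the full expansion \eqref{shapecanonicalwheels} together with the evaluation $I_{W_g}(\omega^{2g-1})=g\binom{2g}{g}\zeta(g)$ of \cite{BrownSchnetzoneformmatrices}, in which all higher-weight contributions cancel. Your fallbacks do not replace this. Numerics cover only finitely many $n$. Positivity would require controlling the signs of all the coefficients $c_k$, or proving pointwise positivity of the form on $\sigma_{W_{2n+1}}$, and you give no argument for either. ``Induction via the block structure'' is not yet an argument. As written, the proposal reduces the theorem to the unproven claim that $I_{W_{2n+1}}(\omega^{4n+1})\neq 0$ for all $n$.
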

The proof of this theorem is transcendental, and involves computing an integral. We shall explain a second proof of this kind in the next lecture. 
Recently, Ben Ward \cite{Ward} has given  a purely algebraic proof of this theorem. Below is a picture of the homology of $\GC_2$\footnote{one of the primary reasons for considering $\GC_2$ rather than the possibly more natural choice $\GC_0$ is that the diagram fits more easily onto  the page}.

\begin{table}[h]
\begin{center}
\vspace{0.1in} 
\begin{tabular}{c|ccccccccccc}
$H_8$ &&&&&&&&&& $\0$   \\
$H_7$ &&&&&&&&& $\0$& 1   \\
$H_6$ &&&&&&&& $\0$ & 0& 0 \\
$H_5$ &&&&&&& $\0$ &  0 & 0& 0 \\
$H_4$ &&&&&&  $\0$ &  0 &  0& 0 &0\\
$H_3$ &&&&& $\0$ & $1$   & 0&  $1$ &1 &  2  \\
$H_2$ &&&& $\0$ & 0&0 & 0 & 0  & 0 & 0 \\
$H_1$ &&& $\0$ & 0 & 0 & 0 & 0 & 0 & 0 &  0 \\
$H_0$ && $\0$ & 1  & 0& 1 &0  & 1  &   $1 $ & 1 & 1 &  2\\ 
\hline  
$h_G$ & $1$  & $2$ & $3$ & $4$ & $5$ & $6$ & $7$ & $8$& $9$& $10$ & $11$
\end{tabular}
\vspace{0.2in} 
\end{center}
\caption{A number indicates the dimension of $H_k(\GC_2)$ at low loop order obtained by computer \cite{GraphComplexComputations, GraphComplexComputations24}.  All entries above the blue line of zeros necessarily vanish.  
 } 
\label{default}\label{table:knownresults} 
\end{table}

Graph homology has additional structure: it is a bigraded Lie coalgebra, i.e., admits a map $H(\GC_2) \rightarrow \bigwedge^2 H(\GC_2)$ satisfying certain properties. Its graded dual (known as graph cohomology) is a bigraded Lie algebra.

\begin{remark}
 The reader may recognise the numbers in the bottom row corresponding to $H_0$. They coincide with the conjectural dimensions of the  $\QQ$-vector space of multiple zeta values $\mathcal{Z}_g$ of weight $g$ modulo the ideal $I_g$ generated by   $\zeta(2)$ and non-trivial products.  
 
 A   choice of possible generators for these vector spaces are as follows:
\[ \begin{tabular}{c|ccccccccccc}
$g$ &  $2$ & $3$ & $4$ & $5$ & $6$ & $7$ & $8$& $9$& $10$ & $11$  \\
\hline
$\mathcal{Z}_g/I_g$ & $0$ & $\zeta(3)$  & $0$& $\zeta(5)$ & $0$  & $\zeta(7)$  & $\zeta(3,5)$&    $ \zeta(9) $ & $\zeta(3,7)$ & $\substack{ \zeta(11) \\ \zeta(3,3,5)}$ \\ 
\end{tabular} \ . 
\]

  \end{remark} 

\subsection{Some known results on the homology of the graph complex}
We briefly review some key theorems on  the homology of the graph complex. 
\begin{thm} (Willwacher \cite{WillwacherGRT}). The zeroth homology group $H_0(\GC_2)$ is isomorphic to the graded dual of the Grothendieck-Teichm\"uller Lie algebra $\mathfrak{grt}$ (equivalently, the graph cohomology group $H^0(\GC_2)$ is isomorphic, as a graded Lie algebra, to $\mathfrak{grt}$). 
\end{thm}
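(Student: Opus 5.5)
This is Willwacher's theorem, and I would follow his strategy: pass through the deformation theory of the operad of little disks, i.e. Kontsevich's formality. The argument relates two Lie algebras, $H^0(\GC_2)$ and $\mathfrak{grt}$, through a common third object. That object is the homotopy derivations of the $E_2$ operad, modelled by the Kontsevich graphs operad $\mathsf{Graphs}_2$.

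First, I will dualise and work with graph cohomology $H^0(\GC_2)$, since the Lie bracket is more visible there. Introduce the full graph complex $\mathrm{fGC}_2$, which allows disconnected graphs, bivalent vertices and so on. I then show by standard spectral sequence arguments that its cohomology is that of $\GC_2$, up to the known classes: the loop classes $L_{4k+1}$ and the free symmetric algebra on connected graphs. Second, I let $\GC_2$ act by operadic derivations on $\mathsf{Graphs}_2$, inserting a graph at a vertex and summing over reconnections of the hairs. This gives a map of dg Lie algebras
\[ \GC_2 \To \mathrm{Der}(\mathsf{Graphs}_2), \]
and I will prove it is a quasi-isomorphism, modulo a one-dimensional class accounting for the rescaling (the $\mathfrak{gl}_1$ part). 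Concretely, I filter $\mathrm{Der}$ by arity and compare with the hairy graph complexes. The bar/cobar deformation complex $\mathrm{Def}(\mathsf{hoe}_2 \to \mathsf{Graphs}_2)$ reduces to hairy graphs, and one-hair graphs are identified with $\GC_2$ by adding or removing the hair. The remaining multi-hair contributions must be shown acyclic, and I expect this to come from a contracting homotopy that splits off external legs.

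Third, I use formality of $E_2$, namely $\mathsf{Graphs}_2 \simeq C_*(E_2) \simeq H_*(E_2) = \mathsf{e}_2$, which holds over $\QQ$ by Tamarkin and Kontsevich. This lets me replace the derivation complex by the homotopy derivations of the rational $E_2$ operad. Its $H^0$ is, by Tamarkin's and Fresse's identification, the Lie algebra of the pro-unipotent group of homotopy automorphisms of the rationalised $E_2$ operad. That group is $\mathrm{GRT}_1 \rtimes \mathbb{G}_m$, through the parenthesised braid description. So $H^0(\mathrm{Der}) = \mathfrak{grt}_1 \oplus \QQ$, and the $\QQ$ cancels against the rescaling class, giving $H^0(\GC_2)\cong \mathfrak{grt}_1$ as Lie algebras, since every map used is a Lie map. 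Dualising gives the statement about $H_0$.

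The main obstacle is the second step, the quasi-isomorphism between $\GC_2$ and the derivations of $\mathsf{Graphs}_2$. It needs a delicate analysis of hairy graph complexes and a check that the vertex-insertion action respects the brackets. The identification of homotopy automorphisms of $E_2$ with $\mathrm{GRT}$ I would import from the literature rather than reprove.
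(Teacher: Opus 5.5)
The notes only cite Willwacher, so the comparison is with his proof. Your steps~1 and~2 broadly follow his architecture: the full graph complex, the action on $\mathsf{Graphs}_2$, and the comparison of $\mathrm{Def}(\mathsf{hoe}_2\to\mathsf{Graphs}_2)$ with $\GC_2$ up to the rescaling class via hairy graphs. The gap is step~3. It is the one place where $\mathfrak{grt}_1$ has to enter, and there you import it rather than prove it. The statement you quote, that $H^0$ of the homotopy derivations of $\mathsf{e}_2$ is $\mathfrak{grt}_1\oplus\QQ$, is not an independent input. Tamarkin's work provides an action of $\mathrm{GRT}$ on $\mathsf{e}_2$ up to homotopy, i.e.\ at best one inclusion. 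To my knowledge, the full computation of the homotopy derivations of the Gerstenhaber operad was first obtained by Willwacher, in the same paper, as a \emph{consequence} of the graph-complex theorem. As written, the argument is circular at its only non-formal step.

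Suppose you mean instead Fresse's later group-level theorem: $\pi_0$ of the homotopy automorphism space of the rationalised $E_2$ operad is $\mathrm{GT}(\QQ)\cong\mathrm{GRT}_1(\QQ)\rtimes\QQ^\times$, via parenthesised braids. Then two bridges are missing.

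First, that result concerns automorphisms of the space-level operad, i.e.\ of a Hopf (coalgebra-compatible) model. By contrast, $\mathrm{Def}(\mathsf{hoe}_2\to\mathsf{Graphs}_2)$ computes homotopy derivations of the \emph{linear} dg operad, and these are a priori different objects. Formality does not bridge them: $\mathsf{Graphs}_2\simeq\mathsf{e}_2$ is already a purely combinatorial fact, and $C_*(E_2)\simeq\mathsf{e}_2$ identifies the operads, not the two notions of automorphism.

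Second, passing from $\pi_0$ of a homotopy automorphism group to $H^0$ of a derivation complex needs its own Lie-theoretic argument: complete filtrations, exponentiating degree-zero cocycles, and separating the $\mathbb{G}_m$ factor (note that $\mathrm{GRT}_1\rtimes\mathbb{G}_m$ is not pro-unipotent).

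Making your route work would require Hopf biderivations of $\mathsf{Graphs}_2^*$ in step~2 and a proof of such a comparison in step~3. Willwacher sidesteps all of this by reaching $\mathfrak{grt}_1$ through Drinfeld's algebraic definition. The internally connected part of $\mathsf{Graphs}_2(n)$ models the Drinfeld--Kohno Lie algebra $\mathfrak{t}_n$, and a spectral sequence reduces the degree-zero computation to elements $\psi\in\mathfrak{t}_3$ satisfying Drinfeld's linearised relations.

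Finally, ``every map used is a Lie map'' is not automatic. Strict derivations of the non-cofibrant $\mathsf{Graphs}_2$ are not homotopy invariant, and $D\mapsto D\circ\phi$ into the deformation complex is a priori only a map of complexes. Compatibility with the brackets therefore needs a separate argument.
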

Unfortunately not much is known about the structure of $\mathfrak{grt}$ except that it contains the so-called `motivic' Lie algebra, which in turn governs the structure of motivic multiple zeta values. 

\begin{thm}  \label{thm: BrownFreeness} The Lie algebra $\mathfrak{grt}$  contains a copy of a free Lie algebra 
\[ \mathbb{L}(\sigma_3,\sigma_5,\ldots ) \hookrightarrow  \mathfrak{grt}\]
with one generator $\sigma_{2k+1}$ in every odd degree $2k+1$.  
\end{thm}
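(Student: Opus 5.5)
The plan is to deduce the statement from the theory of motivic multiple zeta values and mixed Tate motives over $\ZZ$, by combining three ingredients. The first is that the category $\mathcal{MT}(\ZZ)$ of mixed Tate motives over the integers has a motivic Galois group $G^{\mathcal{MT}} = U \rtimes \mathbb{G}_m$. Its pro-unipotent radical $U$ has a graded Lie algebra $\mathfrak{u}$ that is \emph{free}, with one generator $\sigma_{2k+1}$ in each odd degree $2k+1\geq 3$. This follows from Borel's computation of $K_{2n-1}(\ZZ)\otimes \QQ$, which gives $\mathrm{Ext}^1_{\mathcal{MT}(\ZZ)}(\QQ(0),\QQ(n))$ of rank one for $n\geq 3$ odd and zero otherwise, together with the vanishing of $\mathrm{Ext}^2$ (Deligne--Goncharov). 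Hence $\mathfrak{u}\cong \mathbb{L}(\sigma_3,\sigma_5,\ldots)$ abstractly. The second ingredient is that $U$ acts on the motivic fundamental groupoid of $\PP^1\setminus\{0,1,\infty\}$ with tangential base points, and this action factors through automorphisms satisfying the associator relations. On Lie algebras this gives a morphism $\mathfrak{u}\to\mathfrak{grt}$, with $\sigma_{2k+1}$ mapping to elements whose leading term is $\mathrm{ad}(x_0)^{2k}(x_1)$ modulo higher-depth terms. This is essentially the classical map going back to Ihara, Drinfeld and Deligne.

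The remaining step, which is the heart of the matter, is to prove that $\mathfrak{u}\to\mathfrak{grt}$ is injective. Equivalently, $U$ acts faithfully on $\pi_1^{\mathrm{mot}}(\PP^1\setminus\{0,1,\infty\})$, i.e.\ this groupoid generates $\mathcal{MT}(\ZZ)$. I would argue dually, with the algebra $\mathcal{H}$ of motivic multiple zeta values. The statement is equivalent to $\mathcal{H}\cong \mathcal{O}(U)\otimes \QQ[\zeta^{\mathfrak m}(2)]$. By dimension counting, $\mathcal{O}(U)\otimes\QQ[\zeta^{\mathfrak m}(2)]$ has Hilbert series $1/(1-t^2-t^3)$, and $\mathcal{H}$ injects into it. It therefore suffices to exhibit $d_N$ linearly independent motivic MZVs in weight $N$, with $\sum d_N t^N = 1/(1-t^2-t^3)$. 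The candidate family is the Hoffman elements $\zeta^{\mathfrak m}(n_1,\ldots,n_r)$ with $n_i\in\{2,3\}$.

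To show these are independent, I would use the coaction $\Delta:\mathcal{H}\to \mathcal{O}(U)\otimes\mathcal{H}$ and its infinitesimal pieces $D_{2k+1}$, filtered by the number $\ell$ of $3$'s (the level). Induction on weight and level reduces the problem to showing that a certain matrix built from the leading coefficients of $D_{2k+1}$ acting on level-$\ell$ Hoffman elements, modulo level $\ell-1$, is invertible over $\QQ$. Those coefficients come from the numbers $\zeta^{\mathfrak m}(2,\ldots,2,3,2,\ldots,2)$. Via Zagier's evaluation of $\zeta(2^{a},3,2^{b})$ as explicit rational combinations of $\zeta(2m+1)\zeta(2)^{n}$, the coefficients become explicit rationals such as $\binom{2r}{2a+2}$ minus corrections with $2^{-2r}$ factors. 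The invertibility then follows from a $2$-adic argument: modulo $2$ the matrix is triangular with odd diagonal entries.

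I expect the main obstacle to be this last invertibility step. It needs the precise form of Zagier's formula, which must be lifted to the motivic setting. That lift uses the fact that the motivic version is determined up to a rational multiple of $\zeta^{\mathfrak m}(N)$, and the multiple is fixed by the real period. The 2-adic estimates on the binomial-type coefficients must also be done carefully. Once independence is established, injectivity of $\mathfrak{u}\to\mathfrak{grt}$ follows, and Theorem~\ref{thm: BrownFreeness} is proved. By Willwacher's theorem, it then also gives a free Lie subalgebra in $H^0(\GC_2)$, compatible with the odd wheel classes.
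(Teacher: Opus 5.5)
The paper does not prove this theorem; it quotes it as a known result. It is Brown's theorem that the motivic fundamental groupoid of $\PP^1\setminus\{0,1,\infty\}$ generates $\mathcal{MT}(\ZZ)$, combined with Deligne--Goncharov's freeness of $\mathfrak{u}$ and the classical map $\mathfrak{u}\to\mathfrak{grt}$, and your proposal is a correct outline of exactly that standard proof. The one ingredient you leave implicit is the lemma that each $D_{2r+1}$ maps the span of motivic Hoffman elements into $\mathcal{L}_{2r+1}$ tensored with that same span while strictly lowering the level, so that only level-one cut-outs (explicit multiples of $\zeta^{\mathfrak{L}}(2r+1)$) survive in the associated graded; this is what makes your induction on weight and level close up and reduces everything to the $2$-adic invertibility you describe.
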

One can show that the elements $\sigma_{2k+1}$ are dual to the wheel classes $W_{2k+1}$. 
An open conjecture of Drinfeld's is that $\mathfrak{grt}$ should be isomorphic to the free graded Lie algebra with one element in every odd degree $\geq 3$, i.e., the previous map is surjective.

Let $\mathcal{M}_g$ denote the moduli space of curves of genus $g>1$. Its cohomology has a mixed Hodge structure, and in particular a weight filtration. 

\begin{thm} \cite{CGP} \[  H_n(\GC_2^{(g)}) \cong  H_{2g+n-1}(\LM_g^{\trop}) \cong \mathrm{gr}^W_{6g-6} H^{4g-6-n}( \mathcal{M}_g)\] 

\end{thm}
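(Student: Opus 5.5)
The plan follows Chan--Galatius--Payne and proves the two isomorphisms separately. The first is purely combinatorial/topological. The second uses the identification of the top weight cohomology of $\mathcal{M}_g$ with the reduced homology of the boundary complex of a normal crossings compactification.

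\emph{Step 1: the second isomorphism.} Take the Deligne--Mumford compactification $\overline{\mathcal{M}}_g$, whose boundary $\overline{\mathcal{M}}_g\setminus\mathcal{M}_g$ is a normal crossings divisor in the orbifold sense. By Deligne's theory of weights, for a smooth variety $X$ of dimension $d$ with such a compactification, the top weight piece $\mathrm{gr}^W_{2d}H^{2d-k}(X)$ is computed by the reduced homology of the dual (boundary) complex $\Delta(D)$: one has $\mathrm{gr}^W_{2d}H^{2d-k}(X)\cong \widetilde{H}_{k-1}(\Delta(D);\QQ)$. The cleanest route to this is the weight spectral sequence, whose top-weight row is the cochain complex of the dual complex, combined with Poincar\'e duality.

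Here $d=3g-3$, and the dual complex of the boundary divisor is exactly $\LM_g^{\trop}$. Its cells are the $\sigma_G$, and these correspond to strata via the dimension-reversing correspondence of stable weighted graphs with boundary strata described earlier. The orbifold and automorphism issues are handled by working with $\QQ$-coefficients, where quotients by finite groups are harmless.

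Putting $k=2g+n$ gives $\mathrm{gr}^W_{6g-6}H^{4g-6-n}(\mathcal{M}_g)\cong\widetilde{H}_{2g+n-1}(\LM_g^{\trop};\QQ)$. This step is essentially a citation, once one checks that the symmetric $\Delta$-complex structure matches.

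\emph{Step 2: reduce to the weight zero part.} The boundary $\partial\LM_g^{\trop}$ is contractible (\cite{CGP}, as stated earlier). The long exact sequence of the pair therefore gives
\[
\widetilde{H}_*(\LM_g^{\trop})\cong H_*(\LM_g^{\trop},\partial\LM_g^{\trop}).
\]
The relative cellular chain complex is spanned by the cells $\sigma_G$ with $G$ of total weight zero, with orientation characters from $\mathrm{Aut}(G)$. Its boundary map is the sum over edge contractions, where contractions producing positive weight are dropped. This is exactly the rule $G\q e=0$ for self-edges.

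A cell $\sigma_G$ has dimension $|E_G|-1$. Since $\deg G=|E_G|-2g$, a graph of degree $n$ sits in homological dimension $2g+n-1$. The orientation of a simplex is the sign of an edge ordering, which is the $\GC_2$ orientation. Graphs with an odd automorphism give zero on both sides. Thus the relative chain complex is the loop order $g$ part of a graph complex, except that it also allows loops, multiple edges and low-valence vertices (stability only forbids weight zero vertices of degree $\le 2$).

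\emph{Step 3, the main obstacle: removing the extra generators.} We must show that the subcomplex spanned by graphs with self-edges, or by the parts where they interact, is acyclic or vanishes. Graphs with self-edges (tadpoles) at weight zero are nonzero cells. Multiple edges survive as well, though graphs with a double edge have an odd symmetry and vanish with $\QQ$ coefficients.

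I would filter by the number of self-edges or by the structure of bridges and tadpoles, and build an explicit contracting homotopy. One candidate homotopy splits off a loop: it inserts a new edge separating a tadpole from its base vertex. Another standard alternative is Willwacher's comparison of variant graph complexes, which shows that allowing tadpoles and bivalent vertices does not change homology up to known classes such as loop graphs, and those classes are absent here for $g\ge2$. Checking the signs in this quasi-isomorphism is where I expect the real work to be.
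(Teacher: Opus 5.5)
Your outline is correct. Apart from one step, it is the argument of Chan--Galatius--Payne \cite{CGP}, which the paper cites without proof:
\begin{itemize}
\item the second isomorphism comes from identifying $\LM_g^{\trop}$ with the boundary complex of $\overline{\mathcal{M}}_g$ and reading off top weight from Deligne's weight spectral sequence;
\item the first comes from contractibility of $\partial\LM_g^{\trop}$, together with identifying the relative cellular complex (orientation characters from $\mathrm{Aut}(G)$, differential given by edge contraction) with the graph complex.
\end{itemize}

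You diverge in the removal of self-edges. CGP do this topologically: they prove that the larger subcomplex of tropical curves with a self-edge \emph{or} a vertex of positive weight is also contractible, so the relative complex of that pair is already the loopless graph complex. You instead propose to show algebraically that the subcomplex $L$ spanned by weight-zero graphs with at least one self-edge is acyclic. That route works and is more elementary, but two points need correcting.

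First, there are no low-valence vertices to remove. In total weight zero every vertex has weight $0$, so stability forces degree $\geq 3$. Double edges vanish on both sides, so self-edges are the only discrepancy.

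Second, your splitting homotopy may only split off a self-edge $f$ sitting at a vertex of degree $\geq 4$. Splitting at a trivalent vertex would create a bivalent one. Put $h[G,\omega]=\sum_f[G_f,e_0\wedge\omega]$, where $G_f$ moves $f$ to a new vertex joined to its old base by a new edge $e_0$. Then $dh+hd=-c\cdot\mathrm{id}$ on $L$, where $c\geq 1$ is the number of self-edges:
\begin{itemize}
\item The diagonal terms each contribute $-[G,\omega]$. These are: split $f$ then contract $e_0$; or contract the stalk of a trivalent tadpole then re-split it.
\item All remaining terms cancel in pairs.
\item $d$ and $h$ both preserve $c$ on nonzero classes. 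Contracting a self-edge lands in $\partial\LM_g^{\trop}$, and a new self-edge can only arise from a double edge.
\end{itemize}
Hence $-h/c$ is a contracting homotopy, and no filtration is needed. This uses $g\geq 3$; the rose and dumbbell misbehave at $g=2$, where the whole complex is zero anyway.

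Your version gives a self-contained chain-level proof of the loop step. CGP's gives the stronger statement that the loop-or-weight locus is contractible, not just rationally acyclic relative to the weight locus.
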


Putting the above results together implies that  the    top weight-graded pieces of the cohomology of the spaces  $\mathcal{M}_g$, for $g>1$,  contain the free Lie algebra $\mathbb{L}(\sigma_3,\sigma_5,\ldots )$.
This circle of ideas gives an application of the theory of multiple zeta values to topology.

It follows in particular that the dimension of  $\mathrm{gr}^W_{6g-6} H^{6g-6-n}( \mathcal{M}_g)$ grows exponentially fast in $g$. Recently, Borinsky has shown \cite{BorinskyEulerChar}, using methods of zero-dimensional quantum field theory, how to compute the Euler characteristic of $\GC_2$. He deduces that the homology of $\GC_2$ in fact grows super-exponentially. 

\subsection{Questions arising} 

\begin{quest} \label{quest1} What is the precise relationship between $H_0(\GC_2)$ and multiple zeta values? 
\end{quest} 

The  results  mentioned in  the previous paragraph imply  that $H_0(\GC_2)$ admits a map to the ring of  multiple zeta values modulo products and modulo $\zeta(2)$.
One expects that such a map should  arise by associating an integral to a graph, along the lines discussed in \S \ref{sect:FeynmanMZVs}.
We shall explain how one can achieve this in more detail  in the following lecture.
A related question is whether one can  associate motives to graph homology classes which would explain the appearance of the motivic Lie algebra  in theorem \ref{thm: BrownFreeness}. In degree zero, one would expect these to be mixed Tate motives over the integers. 

\begin{quest} \label{quest2} How can  we interpret the higher-degree homology $H_n(\GC_2)$ for $n>0$?
\end{quest}

If indeed one can assign motives and period integrals to graph homology classes, what kinds of numbers should we expect from the higher-degree homology classes?

\subsection{Differential forms on the moduli space of tropical curves}
A first answer to question \ref{quest1}  was given in the work of Rossi-Willwacher, who defined integrals associated to graphs of the type arising in deformation quantisation. They used this to show that the wheel classes are non-zero in graph homology.

 Since the graph complex computes the homology of the moduli space of tropical curves,  a more geometric approach would be to define integrals directly on the space $\LM_g^{\trop}$ of tropical curves, whose domains of integration  are the simplices $\sigma_G$. 
However, in order to define the integrands, one needs a notion of differential form on these spaces, which we recall are not differentiable  manifolds and, in addition, can have a local orbifold structure and therefore  the usual de Rham theory does not apply.

The following definition, however,  is adapted to these features.

\begin{defn} A differential $n$ form $\omega$ on $\LM_g^{\trop}$ is a family of differential forms 
\[ \{\omega_G\}_G  \ \in \  C^{n} ( \overline{\sigma}_G)  \]
which are smooth on an open neighbourhood of  $\overline{\sigma}_G$, one for each stable weighted  graph $G$ of genus $g$, which satisfy the compatibility conditions
\begin{eqnarray} 
\tau^* \omega_G & =&  \omega_G \qquad \hbox{ for all } \tau \in \mathrm{Aut}(G) \nonumber \\ 
\omega_G\Big|_{\overline{\sigma}_{G/e}} & =&  \omega_{G/e}    \nonumber 
\end{eqnarray}
where the second condition means that the restriction of $\omega_G$ to the locus $\ell_e=0$ (which, we recall,  is canonically identified with $\overline{\sigma}_{G/e}$)  coincides with $\omega_{G/e}$.
\end{defn} 

Thus a differential form on $\LM_g^{\trop}$ is a collection of forms $\omega_G$, one for each cell $\sigma_G$, which glue together in a consistent manner.

One can try to construct examples of such differential forms  recursively. Suppose that we have a differential form defined on all simplices of dimension $<k$. Then one can try to extend these forms to all simplices of dimension $k$, and proceed by induction. The existence  of such differential forms is  guaranteed by a kind of de Rham theorem for spaces of this kind \cite{brown2023bordifications}, but this procedure is not easy to carry out in practice.

Unfortunately, I do not presently know of an explicit general construction of differential forms  on $\LM_g^{\trop}$. There is, however, a very natural construction of differential forms which are allowed poles along the boundary locus $\partial \LM^{\trop}_g$ which we shall describe in the next lecture. 
Such a form is to be interpreted, roughly speaking, as a  differential form on  the open locus $\LM_g^{\circ, \trop}$.

\subsection{Example} \label{sect:exampleCanintW3}
The idea  is to use a  graph Laplacian matrix $\Lambda_G$ to define a differential form universally in $G$, in such a way that it does not depend on the choice of cycle basis for $H_1(G;\ZZ)$ which is implicit in the definition of  $\Lambda_G$. 

In the case of the wheel with 3 spokes, we shall consider in the next lecture  the \emph{canonical integral}
\[   \int_{\sigma_{W_3}}  \tr \left((\Lambda_{W_3}^{-1}  d\Lambda_{W_3})^3\right) = \int_{\sigma_{W_3}} 10 \frac{\Omega_{W_3}}{\Psi_{W_3}^2} = 60 \, \zeta(3)\ . 
\]
which happens in this case to reduce to a Feynman residue.   Feynman residues themselves will not do: the integrands cannot be assembled to form a compatible system as they stand  because the degree of the integrand of the  Feynman residue  of a graph  $G$ depends on the number of edges in $G$.

\section{Canonical forms and general linear group}

We now introduce a universal construction of differential forms associated to families of matrices, which will provide a canonical source of integrands compatible with all the structures considered so far. Applying this construction to the graph Laplacian matrices $\Lambda_G$ will give well-defined differential forms which can be assembled together in the manner of the previous lecture.   From this, we may  define `canonical' integrals on tropical moduli spaces of curves, which will enable us  to make a connection between graph homology classes, Feynman integrals and multiple zeta values.

\subsection{Bi-invariant forms}
Let $X$ be an $m \times m$ matrix whose entries $X_{ij}$ we think of as generic functions. The differential $dX$ is the matrix  whose entries are differential $1$-forms
\[ (dX)_{ij} = d X_{ij}\ . \]
Consider the differential $n$-form:
\[ \omega_X^n = \tr  \left(\left( X^{-1} dX \right)^n\right)   \quad \in     \QQ\!\left[X_{ij}, dX_{ij},  \frac{1}{\det(X)}\right]\ . \]
Since it involves inverting the matrix $X$ and has $n$ factors it \emph{a priori} has denominator $\det(X)^n$. 
One shows that for $n=1$ one has:
\[  \omega_X^1 =  d \log \det(X) =   \frac{ d \det(X)}{ \det(X)} \ .\]

\begin{ex}
Consider a generic family of $2\times 2$ matrices: 
\[ X= \begin{pmatrix} x_1 & x_3 \\ x_4 & x_2 \end{pmatrix} 
\]
The reason for the strange numbering is simply to make the signs easier to write down. One may check that 
\[ \omega^3_X = \tr (X^{-1} dX. X^{-1} dX . X^{-1} dX)  = 3  \sum_{i=1}^4 (-1)^i   \frac{  x_i 
\, dx_1 \wedge \ldots \widehat{dx_i} \wedge \ldots dx_4  }{ \det(X)^2}\]
\emph{A priori}, we expect  the power of $\det(X)$ in the denominator to be $3$, but there is a cancellation with a $\det(X)$ which factors into  the numerator. In the end, the power of $\det(X)$ in the denominator is only two.   
\end{ex}
A remarkable feature of these forms is that, despite their definition, significant cancellations occur, resulting in much lower powers of $\det(X)$  in the denominator than one might expect \emph{a priori}.

\begin{ex} \label{ex: omega5rank3} 
Consider the symmetric matrix
\[ X = \begin{pmatrix}   
x_1 & x_4 & x_5 \\
 x_4 & x_2 & x_6 \\
 x_5 & x_6 & x_3 
 \end{pmatrix}\ .
 \] 
 Then one finds that $\omega^3_X=0$, and with the help of a computer, one may check that 
 \begin{equation}  \omega^5_X=  10 \sum_{i=1}^5  (-1)^i  \frac{ x_i\, dx_1 \wedge \ldots \wedge \widehat{ dx_i} \wedge \ldots \wedge dx_{6} }{ \det(X)^2} 
 \end{equation}
 Again, one sees that there are many    cancellations: the power of $\det(X)$ in the denominator is only two, whereas one would have expected it to be five.
\end{ex}

\subsection{Properties}
The forms $\omega_X^n$ have a number of remarkable properties.  One shows as a simple  consequence  of the definition that they vanish if $n$ is even, and define closed forms otherwise: $d \omega_X^n=0$.  Furthermore, one has
\[ \omega^n_{X^T} = (-1)^{\frac{n(n-1)}{2}}\omega^n_X\]
which implies that if $X$ is symmetric, then $\omega^n_X$ vanishes for $n\equiv 3 \mod 4$. 
Finally, if $n>1$ one has the projective invariance property
\[ \omega^n_{\lambda X} = \omega^n_X\]
for any $\lambda$ (not necessarily constant). Note that this property fails for $\omega^1_X$, which satisfies $\omega^1_{\lambda X } = \omega^1_X + \frac{d \lambda}{\lambda}$ and for this reason will be discarded.

We will typically consider symmetric matrices $X$. Thus, after setting aside $\omega^1_X$,  we have at our disposal an infinite sequence of closed differential forms:
\[ \omega^5_X , \omega^9_X, \omega^{13}_X, \ldots \]
The most important property of these forms are their  \emph{bi-invariance}.

\begin{lem} Let $X$ be an invertible  $m\times m$ matrix as above.  For any invertible matrices $A, B \in \GL_m(\RR)$ with constant entries (i.e. $dA= dB=0$), one has
\[ \omega^n_{X}= \omega^n_{AX} = \omega^n_{XB}\ .\] 
\end{lem}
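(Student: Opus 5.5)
The plan is a direct computation with matrix-valued differential forms, using only that $A$ and $B$ are constant and invertible. Write $\Theta = X^{-1}dX$, a matrix of $1$-forms, so that $\omega^n_X = \tr(\Theta^n)$.

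For left multiplication I would set $Y = AX$. Since $dA=0$, we get $dY = A\,dX$. Also $Y^{-1} = X^{-1}A^{-1}$. Hence
\[ Y^{-1}dY = X^{-1}A^{-1}A\,dX = X^{-1}dX = \Theta, \]
and therefore $\omega^n_{AX} = \tr(\Theta^n) = \omega^n_X$ identically, before taking any trace.

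For right multiplication I would set $Z = XB$. Then $dZ = dX\,B$ and $Z^{-1} = B^{-1}X^{-1}$, so
\[ Z^{-1}dZ = B^{-1}X^{-1}dX\,B = B^{-1}\Theta B. \]
Because $B$ has constant scalar entries, it commutes with the $1$-form entries, and the products telescope:
\[ (B^{-1}\Theta B)^n = B^{-1}\Theta^n B. \]
It remains to check that $\tr(B^{-1} M B) = \tr(M)$ for a matrix $M=\Theta^n$ whose entries are $n$-forms. This is the only step needing care, since cyclic invariance of the trace can fail for matrices of forms because of Koszul signs. Here, however, $B$ and $B^{-1}$ have degree-$0$ (scalar) entries. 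The identity therefore reduces to
\[ \sum_{i,j,k} (B^{-1})_{ij} M_{jk} B_{ki} = \sum_{j,k} M_{jk}\,\delta_{kj}, \]
where the scalars $B_{ki}$ are moved past $M_{jk}$ with no sign. This gives $\omega^n_{XB}=\omega^n_X$.

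The only potential obstacle is this graded-trace subtlety, and it is resolved by the observation that conjugation is by scalar matrices. The same argument gives simultaneous invariance $\omega^n_{AXB}=\omega^n_X$, which is the form needed later for $P^T\Lambda_G P$ with $P \in \GL_{h_G}(\ZZ)$.
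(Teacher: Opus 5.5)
Your proof is correct and essentially the paper's. The left-invariance computation for $X^{-1}dX$ is identical. For right multiplication, the paper uses cyclicity of the trace to move the degree-zero factor $X^{-1}$ and rewrite $\omega^n_X=\tr((dX\,X^{-1})^n)$, then observes that $dX\,X^{-1}$ is invariant under $X\mapsto XB$. This is the same sign-free cyclicity you use to show $\tr(B^{-1}\Theta^n B)=\tr(\Theta^n)$. Your explicit remark that no Koszul sign arises, because only degree-zero factors are moved, is a point the paper leaves implicit.
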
 
\begin{proof} The form $X^{-1} dX$ is already invariant by left-multiplication by $A$. Indeed, 
\[  (AX)^{-1} d(AX) =  X A^{-1} A \, dX = X^{-1} dX\ .\]
It follows that $\omega^n_X$ is also left-invariant. Now observe that  $dX X^{-1}$ is invariant by right-multiplication by $B$ and notice that
\[  \omega^n_X = \tr (X^{-1} dX. \ldots X^{-1} dX) = \tr (  dX X^{-1}. \ldots dX X^{-1})\]
by cyclicity of the trace.  
\end{proof} 
The property of bi-invariance implies that 
\[ \omega^n_G := \omega^n_{\Lambda_G}\]
is well-defined for any choice of graph Laplacian matrix $\Lambda_G$ associated to $G$. Indeed, changing cycle basis in the definition of $\Lambda_G$ modifies $\Lambda_G$ via the transformation 
\[ \Lambda_G \mapsto P^T \Lambda_G P \qquad \hbox{ for some } P \in \GL_{h_G}(\ZZ) \ . \] 
This implies that the forms $\omega_G$
 associated to graph Laplacians are well-defined independently of the choice of cycle basis, and consequently glue consistently across the cells of the tropical moduli space.
 
\subsection{Canonical integrals}
Define the \emph{canonical algebra} to be the  graded exterior algebra 
\[ \Omega^{\bullet}_{\mathrm{can}} = \bigwedge \left( \bigoplus_{k>0} \QQ \omega^{4k+1}\right) \]
to be viewed as a formal graded algebra where $\omega^{4k+1}$ is in degree $4k+1$. 
 It is generated as a $\QQ$-vector space by 
\[  \omega^{4i_1+1} \wedge \omega^{4i_2+1} \wedge \ldots \wedge \omega^{4i_r+1}\]
for all increasing sequences $1\leq i_1< i_2< \ldots< i_r$.  Each element $\omega \in \Omega_{\mathrm{can}}$ may  be viewed as a map which assigns to any graph  $G$ a differential form $\omega_G$.

 \begin{defn}
 Let $G$ be any connected graph with $n+1$ edges and let $\omega \in \Omega^{n}_{\mathrm{can}}$ 
 be of degree $n$. Define the \emph{canonical integral} to be 
 \begin{equation} \label{defn: Icanonical} I_G( \omega)  = \int_{\sigma_G}  \omega_G \end{equation}
\end{defn}
 Unlike Feynman residues, which may diverge due to boundary singularities,  a striking feature of  canonical forms is that they automatically compensate these divergences.
 \begin{thm} \cite{BrSigma} \label{thm: CanIntisfinite} The canonical integrals $I_G(\omega)$ are always finite.
 \end{thm}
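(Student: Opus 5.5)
The integrand $\omega_G$ is smooth on the open simplex $\sigma_G$ and also on those parts of the boundary of $\overline{\sigma}_G$ where $\Psi_G>0$. By the Lemma on the vanishing locus of $\Psi_G$, it can only be singular along faces $\overline{\sigma}_{G'}$ where $G'$ is obtained by contracting edge sets containing a cycle. The plan is therefore to pass to a blow-up of the simplex along exactly these faces. On the blow-up we will show that $\omega_G$ has no poles along any exceptional divisor. Finiteness then follows because a form that is smooth on a compact manifold with corners has a finite integral.

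\textbf{Step 1 (blow-up).} Let $P_G\to\overline{\sigma}_G$ be the iterated real blow-up of the linear subspaces $\{x_e=0,\ e\in\gamma\}$, for all subgraphs $\gamma\subset G$ with $h_\gamma>0$, taken in increasing order of dimension. This is the standard Feynman-integral blow-up. The exceptional divisor $D_\gamma$ is canonically a product $P_\gamma\times P_{G/\gamma}$, up to corner structure. Near a generic point of $D_\gamma$ we use local coordinates $x_e=t\,y_e$ for $e\in\gamma$ and leave the other coordinates unchanged, so that $D_\gamma=\{t=0\}$. It suffices to show that the pullback of $\omega_G$ extends smoothly across $t=0$ at every such divisor, and also at their intersections, by iterating the argument.

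\textbf{Step 2 (factorisation of the Laplacian).} Choose a cycle basis for $H_1(G)$ that extends a basis of $H_1(\gamma)$. Then
\[\Lambda_G=\begin{pmatrix} t\Lambda_\gamma(y)+O(t) & tB \\ tB^T & \Lambda_{G/\gamma}+tC\end{pmatrix},\]
where the $O(t)$ corrections are made explicit. Conjugating by the constant-free matrix $\mathrm{diag}(t^{-1/2},1)$ is not allowed, but the bi-invariance Lemma gives a substitute. Left-multiplying by the constant matrix $\mathrm{diag}(1,\dots)$ is useless, so instead write $\Lambda_G=D\,M\,D$ with $D=\mathrm{diag}(t^{1/2}I,I)$ and $M$ smooth and invertible at $t=0$; in fact $M\to\mathrm{diag}(\Lambda_\gamma,\Lambda_{G/\gamma})$. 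A direct computation gives
\[\Lambda_G^{-1}d\Lambda_G=D^{-1}\bigl(M^{-1}dM+M^{-1}D^{-1}dD\,M+D^{-1}dD\bigr)D,\]
and $D^{-1}dD=\tfrac12\,\tfrac{dt}{t}\,\mathrm{diag}(I,0)$. Expanding $\omega^n=\tr((\cdot)^n)$ and using cyclicity, the conjugation by $D$ drops out. Every potentially singular term contains exactly one factor of $dt/t$, since $dt\wedge dt=0$.

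\textbf{Step 3 (the obstacle: the $dt/t$ coefficient vanishes).} This is the main difficulty. We must show that the coefficient of $dt/t$ vanishes when restricted to $t=0$. At $t=0$, $M$ is block diagonal and the projector $\Pi=\mathrm{diag}(I,0)$ commutes with $M$. The residue is therefore a sum of terms $\tr\bigl(\Pi\,[M^{-1}dM,\ \Pi]\text{-type words}\bigr)$. The commutator parts vanish, so what remains is $\tfrac12\cdot n\,\tr(\Pi\,(M^{-1}dM)^{n-1})$ together with correction terms coming from $M\Pi M^{-1}=\Pi$. Since $M^{-1}dM$ is block diagonal at $t=0$, this residue equals $\tfrac n2\,\tr((\Lambda_\gamma^{-1}d\Lambda_\gamma)^{n-1})=\tfrac n2\,\omega^{n-1}_{\Lambda_\gamma}$. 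This is an even-degree invariant form, and such forms vanish by the property that $\omega^n_X=0$ for $n$ even. Projective invariance, $\omega^n_{\lambda X}=\omega^n_X$ for $n>1$, confirms that no other $t$-dependence survives. For a product of canonical forms $\omega^{4i_1+1}\wedge\cdots$ the same argument applies, because the $dt/t$ can land in only one factor. The careful bookkeeping is in handling the off-diagonal $O(t^{1/2})$ entries of $D^{-1}\cdots D$ and checking they contribute only positive powers; I would verify this by grading by powers of $t^{1/2}$.

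\textbf{Step 4 (conclusion).} Nested intersections of exceptional divisors $D_{\gamma_1}\cap D_{\gamma_2}\cap\cdots$ are handled by applying Step 2 recursively along the flag $\gamma_1\subset\gamma_2\subset\cdots$, using a block-triangular basis adapted to the flag. Once this is done, $\omega_G$ is smooth on the compact manifold with corners $P_G$. We have $\int_{\sigma_G}\omega_G=\int_{P_G}\omega_G<\infty$, which proves Theorem \ref{thm: CanIntisfinite}.
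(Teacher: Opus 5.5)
Your proposal is correct and follows essentially the same route as the proof in \cite{BrSigma}. You blow up along the subspaces $\{x_e=0,\ e\in E_\gamma\}$ for subgraphs $\gamma$ with loops. You then show that $\omega_G$ has no pole along any exceptional divisor, because the would-be residue reduces to the even-degree form $\tr\bigl((\Lambda_\gamma^{-1}d\Lambda_\gamma)^{n-1}\bigr)$, which vanishes identically. Finiteness then follows from compactness of the closed blown-up simplex, which the strict transform of $\{\Psi_G=0\}$ does not meet; this last fact should be cited or proved explicitly rather than left implicit in your Step 4.

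The main thing to tidy is the symmetric splitting with $D=\mathrm{diag}(t^{1/2}I,I)$: it makes $M$ and $dM$ smooth only in $s=t^{1/2}$, so the bookkeeping has to be done in $s$ using that $\omega_G$ is rational in $t$. You can avoid this altogether by writing $\Lambda_G=\mathrm{diag}(tI,I)\,Y$, with $Y$ polynomial in $t$ and block upper-triangular at $t=0$.
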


 Thus every canonical form $\omega\in \Omega^n_{\mathrm{can}}$ defines a map 
 \[ I_{\bullet}(\omega):  \{\hbox{Connected graphs with $n+1$ edges}\} \To \RR\]
 For any such $\omega$, it follows from the definition $\omega_G = \omega_{\Lambda_G}$ that 
 \[  \omega  \  \in \  \QQ\left[ x_e, dx_e, e\in E_G \right] \left[ \frac{1}{\det(\Lambda_G)}\right] \ . \]
 Recall that $\det(\Lambda_G) = \Psi_G$ is the graph polynomial. Thus we may write
 \begin{equation}\label{IcanisgeneralisedFeynman} I_G(\omega)= \int_{\sigma_G}  \frac{N_G}{\Psi_G^k} \Omega_G
 \end{equation}
 for some integer $k>0$ and $N_G\in \QQ[x_e, e\in E_G]$ is  a certain numerator polynomial in the edge variables $x_e$. It has  the magical quality that it compensates all the  singularities arising from the denominator $\Psi_G$, which vanishes along certain boundary components of the domain of integration, as discussed earlier.  
Thus canonical integrals may be viewed as natural “regularisations” of Feynman integrals, in which the numerator is determined so as to cancel all boundary divergences.
 
 In short, \emph{canonical integrals are generalised Feynman integrals with the extraordinary property that they are always finite, irrespective of the graph $G$}.
 
 \subsection{Examples} In the following examples, we shall compare and contrast  canonical integrals with Feynman residues, which we recall are given by the formula 
 \[ I^{\res}_G = \int_{\sigma_G} \frac{\Omega_G}{\Psi_G^2}\]
 and which are often infinite.
 These examples illustrate that canonical integrals are closely related to Feynman residues, but may differ by subtle correction terms which alter both convergence properties and the weights of the resulting periods. 
A key point is   that canonical integrals associated to one graph may coincide with Feynman residues of a different graph, suggesting the existence of hidden equivalences between these integrals.

 \begin{enumerate}
 \item Let $G= W_3$ be the  wheel with 3 spokes. By the computation in example \ref{ex: omega5rank3}, the canonical form is proportional to the Feynman residue integrand in this case. Thus the canonical integral is a multiple of the Feynman residue,  see \S \ref{sect:exampleCanintW3}. \\
 
 \item  Let $G= W_5$ be the wheel with 5 spokes. Then with the help of a  computer, one checks that the canonical form simplifies to two terms:
 \[  \omega^9_{W_5} =  18 \left( \frac{\Omega_{W_5}}{\Psi_G^2} +   12 \, \frac{x_1x_2\ldots x_5}{\Psi_{W_3}^3} \Omega_{W_5} \right)  \,  \]
 where the edge variables $x_1,\ldots, x_5$ correspond to the five spokes of the wheel $W_5$. The first term is the Feynman residue form and its integral is finite:
 \begin{equation} \label{residueW5} I^{\res}_{W_5} = 70 \zeta(7) \ . \end{equation} 
 The second term  in brackets also converges and yields
 \[  \int_{\sigma_{W_5}}  12 \,  \frac{x_1x_2\ldots x_5}{\Psi_{W_3}^3} \Omega_{W_5} = 70 (\zeta(5) - \zeta(7))\ . \]
 Thus the $\zeta(7)$ term cancels out of the canonical integral and one obtains
 \begin{equation}  \label{canonicalW5} I_{W_5} (\omega^9) =  1260 \, \zeta(5)\ .\end{equation}
 Notice that the canonical integral \eqref{canonicalW5} and Feynman residue \eqref{residueW5} for $W_5$ have different weights ($5$ and $7$ respectively).
 
 \begin{remark} It would be interesting to show  directly that the canonical integral is proportional to the Feynman residue for the wheel with \emph{four} spokes: 
 \[ I_{W_5} (\omega^9) =  18\,  I^{\res}_{W_4} \] 
 Thus the canonical integral is in fact a Feynman residue, but for a different graph! A more striking example of this phenomenon is discussed below.
 \end{remark}

\item  For general canonical integrals of the canonical forms $\omega_G^{4k+1}$ one can show that $I_G(\omega^{4k+1})$ vanishes unless 
$G$ has exactly $h_G= 2k+1$ loops.   We extend the definition of $I_{\bullet}(\omega)$ linearly to define a map on linear combinations of graphs.

\begin{thm} Let $\Xi \in \GC_2$ be a linear combination of oriented graphs with $g$ loops and  $4k+2$ edges whose differential vanishes: $d \,\Xi=0.$ Then if the canonical integral $I_{\Xi}(\omega^{4k+1})$ is non-zero,  $[\Xi] \in H_0(\GC_2)$ is non-zero in homology.
\end{thm}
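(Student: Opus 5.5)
The approach is to show that $\Xi \mapsto I_\Xi(\omega^{4k+1})$ descends to a well-defined linear functional on $\GC_2$ in the relevant bidegree, and that it vanishes on boundaries. If $\Xi = d\Gamma$ for some $\Gamma$ with $4k+3$ edges, we would then get $I_\Xi(\omega)=0$. Hence a non-zero value certifies $[\Xi]\neq 0$. Note that $\Xi$ has degree $4k+2-2g$, and $I$ vanishes unless $g = 2k+1$ by the remark above, so the nonzero case is degree $0$.

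\emph{Step 1: well-definedness on $\GC_2$.} Orient $\sigma_G$ using the edge ordering $\varpi$: the form $\Omega_G$ depends on the order of the coordinates $x_e$, and swapping two edges reverses the orientation. Then $(G,-\varpi)$ gives $-I_G$. For an isomorphism $\tau$, bi-invariance of $\omega$ gives $\tau^*\omega_{G'} = \omega_G$, because relabelling edges changes $\Lambda_G$ only by a change of cycle basis. Together these give $I_{(G,\varpi)} = I_{(G',\tau\varpi)}$. In particular, graphs with an odd symmetry have vanishing integral, consistent with their vanishing in $\GC_2$.

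\emph{Step 2: Stokes.} Take any oriented $\Gamma$ with $4k+3$ edges. The form $\omega_\Gamma$ is closed of degree $4k+1 = \dim \sigma_\Gamma - 1$. Apply Stokes' theorem on $\overline{\sigma}_\Gamma$:
\[ 0 = \int_{\sigma_\Gamma} d\omega_\Gamma = \int_{\partial \sigma_\Gamma} \omega_\Gamma = \sum_i (-1)^i \int_{\sigma_{\Gamma/e_i}} \omega_\Gamma\big|_{x_{e_i}=0}. \]
The signs are the standard boundary orientations of the simplex faces, matching those in $d$. Next use the restriction compatibility: the laplacian of $\Gamma$ at $x_e=0$ is a laplacian of $\Gamma/e$ for a non-loop edge, so $\omega_\Gamma|_{x_e=0} = \omega_{\Gamma/e}$. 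For a self-edge $e$, the face corresponds to $\Gamma\q e = 0$, and we must show that its contribution vanishes. Then the identity reads $I_{d\Gamma}(\omega)=0$. We may restrict attention to graphs allowed in $\GC_2$, but any extra faces must still be shown to contribute zero.

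\emph{Main obstacle.} Stokes' theorem cannot be applied naively, because $\omega_\Gamma$ has poles along the loci where $\Psi_\Gamma=0$, namely the faces of positive weight, and these meet $\overline\sigma_\Gamma$. I would work on a compact bordification, either by blowing up the simplex along the linear subspaces $\{x_e=0, e\in E_\gamma\}$ for subgraphs $\gamma$ with loops, or by truncating to $\Psi_\Gamma \ge \varepsilon \prod x_e$ neighbourhoods. I would then use the result from \cite{BrSigma} behind Theorem~\ref{thm: CanIntisfinite}: near such a face, $\omega_\Gamma$ factorises as a sum of products of canonical forms of $\gamma$ and of $\Gamma/\gamma$, and so extends smoothly to the bordification. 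On the exceptional boundary faces, these product forms integrate to zero for degree reasons. The forms on the faces are of type $\omega^{4k+1}$, which is primitive in the canonical algebra. A product-type restriction $\omega^{4k+1}|$ on $\sigma_\gamma\times\sigma_{\Gamma/\gamma}$ would need both factors to carry the top degree of their own factor simplex. Primitivity forces one factor to be a constant, that is of degree $0$, on a positive-dimensional simplex, so the integral over that face vanishes. The same argument disposes of the self-edge (weight-$1$) faces. Granting this vanishing, Stokes' theorem gives $I_{d\Gamma}=0$, and the statement follows by linearity.
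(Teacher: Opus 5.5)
Your Step~1 is fine, and the overall route (Stokes for the closed form $\omega_\Gamma$ on a bordification of $\sigma_\Gamma$ to which it extends, with primitivity of $\omega^{4k+1}$ used on the exceptional faces) is the right one. The notes state this theorem without proof; the Stokes-on-a-bordification argument is the one underlying \cite{BrSigma}.

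The gap is in how you dispose of the exceptional faces. On $\sigma_\gamma\times\sigma_{\Gamma/\gamma}$ the restriction is $\omega_\gamma\otimes 1+1\otimes\omega_{\Gamma/\gamma}$. Degree counting kills this only when both factors are positive-dimensional. Your own list of blow-ups contains $L_{\Gamma\setminus e}$, which is the vertex $p_e$ of the simplex. It must be blown up, because $\Psi_\Gamma(p_e)=0$ once $h_\Gamma\geq 2$, and over it $\sigma_{\Gamma/\gamma}$ is a point.

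To see what happens there, take $x_e=1$, $x_{e'}=t\,y_{e'}$, and a cycle basis extending one of $\Gamma\setminus e$. Then $\Lambda_\Gamma=\mathrm{diag}(t,\dots,t,1)\,Z$ with $Z|_{t=0}=\begin{pmatrix}\Lambda_{\Gamma\setminus e}(y) & b\\ 0 & 1\end{pmatrix}$. One checks that $\omega_\Gamma$ restricts on $t=0$ to $\omega_{\Gamma\setminus e}$, which is of top degree on $\sigma_{\Gamma\setminus e}$. So each such face contributes $\pm I_{\Gamma\setminus e}(\omega^{4k+1})$, which is not zero for degree reasons. For example, it equals $\pm 60\,\zeta(3)$ when $k=1$ and $\Gamma\setminus e\cong W_3$. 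Your claim that ``one factor is a constant on a positive-dimensional simplex'' fails exactly here. It fails in the same way on self-edge faces, where $\sigma_\gamma$ is the point and the restriction is again $\omega_{\Gamma\setminus e}$.

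The missing input is the loop number, which your Stokes step never uses. Since $d$ preserves loop number, $\Xi=d\Gamma$ forces $h_\Gamma=2k+1$, so for $e$ not a bridge $\Lambda_{\Gamma\setminus e}$ is a $2k\times 2k$ matrix. For $h\times h$ matrices, $\omega^n_X$ vanishes identically whenever $n\geq 2h$. The reason is that the coefficients of $(X^{-1}dX)^{2h}$ are values of the standard polynomial $s_{2h}$, which vanishes on $h\times h$ matrices by Amitsur--Levitzki. Equivalently, you can apply the quoted fact that $I_G(\omega^{4k+1})=0$ unless $h_G=2k+1$ to the graph $\Gamma\setminus e$, which has $4k+2$ edges.

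If $e$ is a bridge, then $\Gamma\setminus e$ is disconnected. In that case $\omega_{\Gamma\setminus e}$ is a sum of forms pulled back from smaller simplices, so it also vanishes in top degree.

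With this, the faces over the vertices contribute nothing, the remaining exceptional faces die by your degree argument, and Stokes gives $I_{d\Gamma}(\omega^{4k+1})=0$. Your conclusion is right, but the vanishing at the blown-up vertices needs this loop-number argument, not primitivity alone.
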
 
It follows that the canonical integrals can detect graph homology cycles. \\

 \item (General canonical wheel integrals).
 
 \begin{thm} \label{thm43}  \cite{BrownSchnetzoneformmatrices}  The canonical wheel integrals for $g\geq 3$ odd are
 \[ I_{W_g}(\omega^{2g-1}) =  g \, \binom{2g}{g} \zeta(g)\ .\]
   \end{thm}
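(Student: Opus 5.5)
We would reduce the canonical wheel integral to Feynman-type integrals whose values are known, mirroring the $W_5$ computation above, where $\omega^9_{W_5}$ collapsed to two terms.

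\textbf{Step 1: structure of the canonical form.} Choose the cycle basis of $W_g$ given by the $g$ triangles formed by consecutive spokes and the rim edge between them. Then $\Lambda_{W_g}$ is a cyclic tridiagonal $g\times g$ matrix: the diagonal entry is $x_{s_i}+x_{s_{i+1}}+y_i$ (two spokes and one rim edge), and the off-diagonal entries are $-x_{s_{i+1}}$ on the cyclic neighbours. We would expand $\omega^{2g-1}_{\Lambda}=\tr((\Lambda^{-1}d\Lambda)^{2g-1})$ by grouping terms according to the monomial in the spoke variables. Since $d\Lambda$ is linear in the $2g$ edge variables, each top-degree term is a sum over orderings of the $dx_e$, weighted by products of entries of $\Lambda^{-1}$, that is, by cofactors. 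Using the bi-invariance lemma and the cyclic $\ZZ/g$ and reflection symmetries of $W_g$, we expect the result
\[ \omega^{2g-1}_{W_g} = c_g \sum_{k} a_{k}\, \frac{P_k(x)}{\Psi_{W_g}^{k}}\,\Omega_{W_g}, \]
where the $P_k$ are elementary symmetric-type products of spoke variables. This generalises the two terms $\Omega/\Psi^2$ and $x_1\cdots x_5\,\Omega/\Psi^3$ found for $W_5$.

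\textbf{Step 2: evaluate each piece.} Each term is a generalised Feynman period on the wheel. We would compute these using the wheel's recursive (zig-zag-free) structure: integrate out the rim variables one at a time, since $\Psi_{W_g}$ is linear in each edge variable, or equivalently use the Gegenbauer/star–triangle method that yields $I^{\res}_{W_{n+1}}=\binom{2n}{n}\zeta(2n-1)$. The expected outcome is that each piece is a rational combination of $\zeta(2g-3),\zeta(2g-5),\dots,\zeta(g)$. The $W_5$ case shows that higher-weight zetas can appear in individual terms.

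\textbf{Step 3: cancellation.} We would then show that all the zeta values other than $\zeta(g)$ cancel, leaving $g\binom{2g}{g}\zeta(g)$. This is the main obstacle, because a term-by-term approach requires controlling arbitrarily many pieces.

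Our first attempt at Step 3 would avoid term-by-term evaluation altogether by using closedness and projective invariance of $\omega^{2g-1}$. Write the integral over $\sigma_{W_g}$ as an iterated integral over the spokes with the rim lengths fixed, and exploit the fact that $\omega$ is pulled back from the space of positive definite matrices. Concretely:
\begin{itemize}
\item By Stokes' theorem, and since $d\omega=0$, deform the domain within the space of symmetric matrices to a simpler chain.
\item Use $\omega_X=\omega_{AXB}$ to move $\Lambda_{W_g}$ to a normal form in which it decomposes as a block or circulant-type matrix. The circulant structure under the cyclic symmetry suggests diagonalising by the discrete Fourier transform.
\item In this normal form, the integral should factor into a one-dimensional integral producing $\zeta(g)$, for example via $\int_0^\infty t^{g-1}/(e^t-1)\,dt$ type integrals arising from products over eigenvalues, times a combinatorial factor.
\end{itemize}
Finally, we would identify the constant $g\binom{2g}{g}$ as the factor $g$ from the trace over $g$ cyclic positions times the central binomial coefficient, which arises as in the residue formula \eqref{eqn: wheelresidues}. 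As a check, the cases $g=3$ and $g=5$ give $60\zeta(3)$ and $1260\zeta(5)$, matching \S\ref{sect:exampleCanintW3} and \eqref{canonicalW5}.
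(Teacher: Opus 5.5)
Your Steps 1--2 point the same way as the route signalled after the theorem: expand $\omega^{2g-1}_{W_g}$ into generalised Feynman periods. But none of the ingredients that actually carry the proof is supplied, so this is a plan rather than an argument.

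\begin{itemize}
\item \textbf{Shape and coefficients.} The shape of the form is only ``expected''. Dihedral symmetry and bi-invariance can tell you at most that the numerators are invariant polynomials. The true structure \eqref{shapecanonicalwheels} is far more rigid: only powers $(x_1\cdots x_g)^k$ of the full spoke product occur, each with one extra power of $\Psi_{W_g}$. Moreover, the theorem depends on the exact rational coefficients $c_k$, which you never determine.
\item \textbf{Values of the pieces.} Step 2 asserts without computation that each $\int_{\sigma_{W_g}}(x_1\cdots x_g)^k\,\Omega_{W_g}/\Psi_{W_g}^{k+2}$ is a rational combination of $\zeta(g),\dots,\zeta(2g-3)$. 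These are wheel periods in dimension $2k+4$ with spoke propagators raised to the power $k+1$. You need their exact values uniformly in $g$ and $k$; edge-by-edge linear reduction only treats one $g$ at a time.
\item \textbf{The cancellation.} The answer comes from an exact cancellation of every weight above $g$ among these pieces, already visible for $W_5$. So Step 3 cannot even be stated without the data above.
\item \textbf{The constant.} Your identification of $g\binom{2g}{g}$ is numerology. Formula \eqref{eqn: wheelresidues} gives $\binom{2g-2}{g-1}$ for $W_g$, not $\binom{2g}{g}$, and the checks at $g=3,5$ only reproduce values already in the text.
\end{itemize}

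The mechanism you propose for Step 3 fails.

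\begin{itemize}
\item \textbf{Normal form.} Bi-invariance only allows constant $A,B$, and no constant pair makes $A\Lambda_{W_g}(x)B$ circulant or diagonal for all $x\in\sigma_{W_g}$. Write $\Lambda_{W_g}=\sum_e x_e\,\xi_e\xi_e^T$ with the $2g$ pairwise non-proportional vectors $\xi_e\in\{e_i,\ e_i-e_{i+1}\}\subset\ZZ^g$ (indices mod $g$). Each summand $A\xi_e\xi_e^TB$ would then have to be a rank-one circulant (respectively diagonal) matrix. That forces the $2g$ pairwise non-proportional vectors $A\xi_e$ onto only $g$ lines: the Fourier eigenlines, respectively the coordinate axes. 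A block normal form is excluded similarly, since it would split the connected matroid of $W_g$.
\item \textbf{Symmetry.} The cyclic symmetry of $W_g$ merely permutes the coordinates of $\sigma_{W_g}$. It is a symmetry of the integral, not a pointwise normal form. In your basis, $\Lambda_{W_g}(x)$ is circulant only where all spokes and all rim edges have equal lengths.
\item \textbf{Stokes.} This step is empty as stated: you do not name the target chain. Any deformation must also be controlled near the corners of $\overline{\sigma}_{W_g}$ where $\Psi_{W_g}$ vanishes and $\Lambda_{W_g}$ degenerates, which is exactly where the difficulty lies.
\item \textbf{Eigenvalue heuristic.} Nothing in a rational integrand produces $\int_0^\infty t^{g-1}/(e^t-1)\,dt$ from ``products over eigenvalues''.
\end{itemize}

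To close the gap you need one of two routes. The first is the one signalled by \eqref{shapecanonicalwheels}: derive that expansion with closed-form $c_k$, evaluate the resulting periods exactly, and prove the summation identity collapsing them to $g\binom{2g}{g}\zeta(g)$. The second is Portner's identification of the canonical integral with the Rossi--Willwacher integral, whose wheel values are known.
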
 

   The differential form $\omega_{W_g}^{2g-1}$ has the form 
   \begin{equation} \label{shapecanonicalwheels} \sum_{k\geq 0}   c_k \left(\frac{x_1\ldots x_g}{\Psi_G}\right)^k  \frac{\Omega_G}{\Psi_G^2} \end{equation}
   for certain coefficients $c_k \in \QQ$, where $x_1,\ldots, x_g$ are the edge variables corresponding to the internal spokes of $G$. It is a generalised Feynman integral, but has lowest possible weight amongst integrals of the form \eqref{shapecanonicalwheels} compared to the Feynman residues of the wheel graphs, which have highest possible weight amongst integrals of the form \eqref{shapecanonicalwheels}. 
The previous theorem  has a number of geometric consequences, including the fact that the wheel classes $[W_g]$ are non-zero in $H_0(\GC_2)$.  
\\

More generally, Jean-Luc Portner  has proven the following result in his thesis.

\begin{thm} (Portner) Let $G$ be a connected graph with  $4k+2$ edges. Then the  canonical integral 
\[  I_G(\omega^{4k+1}) \]
is a (single-valued) multiple zeta value of weight $2k+1$.
\end{thm}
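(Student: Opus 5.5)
The plan is to turn the canonical integral into a position-space Feynman integral and then apply the theory of single-valued periods. First I reduce to the case actually covered by the statement. By item (3) above, $I_G(\omega^{4k+1})$ vanishes unless $h_G=2k+1$. Since $|E_G|=4k+2=2h_G$, we are exactly in the regime \eqref{edgestwiceloops} of Feynman residues. By \eqref{IcanisgeneralisedFeynman}, $I_G(\omega^{4k+1})=\int_{\sigma_G} N_G\,\Omega_G/\Psi_G^{m}$ is a projective integral of degree $0$, and by Theorem~\ref{thm: CanIntisfinite} it converges. To control $N_G$ explicitly, I would first prove a structural formula generalising \eqref{shapecanonicalwheels}. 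Expand $\tr\big((\Lambda_G^{-1}d\Lambda_G)^{4k+1}\big)$ using $d\Lambda_G=\sum_e dx_e\, c_e c_e^T$, where $c_e$ is the column of the cycle–edge incidence matrix. Each term is then a product of scalars $c_{e_i}^T\Lambda_G^{-1}c_{e_{i+1}}$. By the Cauchy–Binet and Dodgson-type identities, these scalars are ratios of Dodgson polynomials $\Psi_G^{e_i,e_{i+1}}/\Psi_G$. This writes $\omega_G$ as an explicit alternating sum over cyclic words in the edges, with numerators built from spanning-forest polynomials.

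Second, I would pass to a parametric representation that produces single-valued numbers. Write each ratio $\Psi^{e,f}_G/\Psi_G$ as a Gaussian (Schwinger) integral with a momentum insertion. Then the whole numerator $N_G/\Psi_G^{m}$ becomes the Schwinger-parametrised form of a momentum-space integral in dimension $4$ with polynomial numerators attached to the edges. Fourier transforming to position space yields a graphical-function type integral over $\CC^{\cdot}\cong\RR^{4\cdot}$ with conformal propagators. Integrals of this kind are single-valued periods by Schnetz's theory of graphical functions and the Brown–Dupont single-valued integration. The concrete step is to check that the numerators coming from $\omega^{4k+1}$ assemble into derivatives of conformally covariant propagators, so that no non-conformal factors survive. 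The antisymmetrisation inherent in the trace should be what guarantees this.

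Third, I would establish the weight. A naive Feynman-period count gives weight up to $2h_G-3=4k-1$, as in \eqref{residueW5}. The claim is that all contributions of weight greater than $2k+1$ cancel, as observed for $W_5$ in \eqref{canonicalW5}. I would argue this motivically. Bi-invariance shows that $\omega^{4k+1}$ is the pullback of the primitive Borel class on the symmetric space of $\GL_{2k+1}(\RR)$. On the motive of the graph hypersurface, such a class generates only a rank-one piece in the top weight, so its periods should have weight $2k+1$, the Borel regulator weight of $\zeta(2k+1)$. Concretely, I would compute the Galois coaction on the motivic version of $I_G(\omega^{4k+1})$, using the coaction principle of \cite{Cosmic}. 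I expect the terms of the coaction to factor through canonical integrals of quotient and minor graphs, each of which vanishes by the loop-count criterion from item (3), except in the lowest weight.

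The main obstacle is the third step: proving that the high-weight parts cancel for an \emph{arbitrary} graph, including graphs that are not linearly reducible, where Hyperint-style fibration algorithms are unavailable. My first attempt would be an induction on edges via Stokes' theorem. Since $\omega_G$ is closed and compatible with contraction, $\omega_G|_{\overline\sigma_{G/e}}=\omega_{G/e}$, so the boundary terms of a primitive of $\omega_G$ can be compared with canonical integrals of the graphs $G/e$. If that induction fails, I would try the position-space representation from the second step, which lets one invoke Schnetz's completion and twist invariance to reduce to known families such as the wheels in Theorem~\ref{thm43}.
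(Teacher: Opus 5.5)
Your Step~2 has a genuine gap, and it is decisive. Brown--Dupont single-valued integration concerns integrals over configurations of points in $\CC\cong\RR^2$ (not $\RR^4$), i.e.\ forms $\omega\wedge\overline{\nu}$ on $\mathcal{M}_{0,n}(\CC)$. Schnetz's graphical functions are single-valued as functions of the external point, but their periods are only proved to be single-valued MZVs for special (constructible) families. Neither result says that a four-dimensional position-space integral with conformal propagators is a single-valued MZV, and this claim is false. Your chain (Schwinger parameters $\to$ momentum space $\to$ position space) applies verbatim to $\Omega_G/\Psi_G^2$. For instance $I^{\res}_{K_{3,4}}=\frac{27}{5}\zeta(5,3)+\cdots$ is a conformal four-dimensional integral of exactly this type. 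By $\zeta(3)\zeta(5)=\zeta(3,5)+\zeta(5,3)+\zeta(8)$ it has a non-zero $\zeta(3,5)$-component, whereas in weight $8$ the single-valued motivic MZVs are just $\QQ\,\zeta(3)\zeta(5)$. More generally, Feynman residues are not even expected to be MZVs (\S\ref{sect: ExFeynmanSummary}). So arranging the numerators to be ``conformally covariant'' cannot be the mechanism. There is a secondary problem as well: the terms of $\omega_G$ carry different powers of $\Psi_G$ (compare $W_5$), hence different effective dimensions. They can also be individually divergent, since only their sum is finite.

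What is missing is a genuinely two-dimensional representation. The paper's route, due to Portner, is to prove that $I_G(\omega^{4k+1})$ is proportional to the Rossi--Willwacher integral of $G$. That is a configuration-space integral over points in the plane, of the kind arising in deformation quantisation, where single-valuedness is available for every graph. The same identification gives the second proof of Theorem~\ref{thm43}, and the conclusion is drawn on the configuration-space side rather than from a cancellation on the graph hypersurface.

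Your Step~3 cannot replace this, for several reasons:
\begin{itemize}
\item The coaction argument presupposes that the motivic canonical integral is a mixed Tate period over $\ZZ$. That is essentially what is to be proved, and the ambient graph motive is not mixed Tate in general. Nothing identifies the components of the coaction with canonical integrals of minors.
\item A primitive of $\omega_G$ is not a canonical form, so the Stokes boundary terms are not the integrals $I_{G/e}$. Indeed $\omega_G$ restricts to zero on the $4k$-dimensional facets $\overline{\sigma}_{G/e}$. Stokes for canonical forms instead relates the $I_{G/e}$ for graphs $G$ with one \emph{more} edge, which is how $I$ descends to graph homology.
\item Completion and twist identities are identities of Feynman residues. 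They neither apply to the numerators $N_G$ nor connect arbitrary graphs to wheels.
\end{itemize}
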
 
In fact he proves that the canonical integral is proportional to the Rossi-Willwacher integrals, which gives another proof of  theorem \ref{thm43}. 

 \end{enumerate}
 
 Next we turn to non-trivial exterior products of two or more canonical forms.
 
 \begin{ex} Let $G= K_6$ denote the complete graph on six vertices. It has  15 edges. 
 One can check with a computer that 
 \[ \omega_{K_6}^5 \wedge \omega_{K_6}^9 =  \frac{9!}{8} \, \frac{\prod_{e\in E_{K_6}} x_e}{\Psi_{K_6}^3}  \, \Omega_{K_6}\ .\]
 This graph has a huge number of divergent subgraphs and is therefore as bad as one can get from the point of view of the Feynman residue. Nevertheless, the integrand above is convergent and Borinsky and Schnetz were able to compute the canonical integral:
 \begin{equation} \label{IK6} I_{K_6}(\omega^5 \wedge \omega^9) = \frac{9!}{16} \left( 360\,  \zeta(3,5) + 690 \zeta(3) \zeta(5) - \frac{29}{315} \pi^8 \right)   \end{equation}
 \[ = \frac{9!}{16} \left( 15\, \zeta(3) \zeta(5)  - \frac{25}{96} I^{\res}_{K_{3,4}}\right) \ ,\]
 where $I^{\res}_{K_{3,4}}$ was the Feynman residue for the bipartite graph mentioned in Lecture 1. Thus once again we see that the canonical integral is closely related to a Feynman residue of a completely different graph. It is a challenge to prove this directly (for example, by integration by parts and application of Stokes' theorem \cite{BrSigma}).  
 \end{ex}

 Recall from lecture 1 that we \emph{do not} expect  all Feynman residues to be multiple zeta values. 
  An obvious  question  can be vaguely phrased as follows: 
 \begin{quest}
 Are all canonical integrals MZV's, or are they more like Feynman residues (in which case they are not MZV's)? 
 \end{quest} 
 Note that there is a difference between evaluating a canonical integral for \emph{individual graphs} as opposed to linear combinations of graphs which are closed in the graph complex $\GC_2$. The former class of numbers is larger than the latter so the corresponding answers to the previous question may be different in both cases.

 \begin{ex} We know of an infinite family of non-explicit canonical integrals of wedge products $\omega^{4i+1} \wedge \omega^{4j+1}$. The
  statement is as follows: 
 
 \begin{thm}  \cite{BrownSchnetzoneformmatrices} For all odd integers $3\leq m<n$ there exists a non-trivial  closed element  $\Xi_{m,n} \in \GC_2$ with edge degree $2m+2n-1$, such that
 \[  I_{\Xi_{m,n}} (\omega^{2m-1} \wedge \omega^{2n-1}) = \zeta(m) \zeta(n)\]
 and $[\Xi_{m,n}]  \in H_{k_{m,n}}(\GC_2)$ is non-zero in graph homology in some odd  degree $k_{m,n}$ where  $0<k_{m,n}<  2 \min\{m,n\} -3.$
 \end{thm}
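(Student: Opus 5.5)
The plan is to build $\Xi_{m,n}$ from the two wheel classes $[W_m]$ and $[W_n]$, and to use Stokes' theorem on the moduli space of tropical curves to reduce the canonical integral of $\omega^{2m-1}\wedge\omega^{2n-1}$ to a product of two single-wheel canonical integrals. These are known by theorem \ref{thm43}.

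\textbf{Step 1: construct a cycle.} Consider a two-vertex (or one-vertex) join of $W_m$ and $W_n$, which has genus $m+n-1$. In the language of the moduli space, this corresponds to the boundary stratum of $\LM_{m+n}^{\trop}$ where a separating degeneration occurs. Its degree is
\[
|E|-2h=(2m+2n)-2(m+n-1)=2 ,
\]
up to the precise conventions, and adjusting the gluing changes the edge count. The goal is to produce a graph chain whose total number of edges is $2m+2n-1$, so that the integral of the degree-$(2m+2n-2)$ form $\omega^{2m-1}\wedge\omega^{2n-1}$ over a cell of the right dimension makes sense. The joined graph is closed modulo lower-genus pieces. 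I would then correct it using the vanishing of $H_k(\GC_2)$ in low degrees, or the Lie coalgebra structure, to obtain an honest cycle $\Xi_{m,n}$.

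\textbf{Step 2: evaluate the integral.} The key input is the factorization of canonical forms along block-diagonal Laplacians. For $\Lambda=\Lambda_1\oplus\Lambda_2$ we have $\omega^k_\Lambda=\omega^k_{\Lambda_1}+\omega^k_{\Lambda_2}$. Hence on the boundary face corresponding to a separating edge, $\omega^{2m-1}\wedge\omega^{2n-1}$ restricts to
\[
\omega^{2m-1}_{W_m}\wedge\omega^{2n-1}_{W_n}\;\pm\;(\text{terms vanishing for degree reasons}).
\]
The degree reasons are that $I_G(\omega^{4k+1})$ vanishes unless $h_G=2k+1$. By the finiteness theorem \ref{thm: CanIntisfinite}, the relevant integrals are finite.

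Applying Stokes' theorem to a primitive of the canonical form on the link $\LM^{\trop}$, in the style of \cite{BrSigma}, the integral over $\Xi_{m,n}$ becomes a boundary contribution. That contribution is a product of fibre integrals,
\[
I_{W_m}(\omega^{2m-1})\,I_{W_n}(\omega^{2n-1})=m\binom{2m}{m}\,n\binom{2n}{n}\,\zeta(m)\zeta(n).
\]
Rescaling $\Xi_{m,n}$ by the inverse of this rational constant gives $\zeta(m)\zeta(n)$ exactly.

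\textbf{Step 3: non-triviality and degree.} Non-vanishing of $[\Xi_{m,n}]$ in homology follows because the integral is nonzero. By the analogue of the detection theorem above, any boundary $d\eta$ integrates to zero because the canonical forms are closed and compatible with face restrictions. The degree bound $0<k_{m,n}<2\min\{m,n\}-3$ and its oddness come from counting edges against loops in the constructed chain, together with the vanishing of $\omega^{4k+1}$-type forms on graphs with too few loops.

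\textbf{Main obstacle.} The hardest step is Step 1 together with the Stokes argument. One must produce an honest closed chain whose boundary behaviour isolates exactly the separating face, with all other faces contributing zero, and also control the contributions from the singular boundary $\partial\LM^{\trop}$. I would first attempt to prove the correction terms exist by an obstruction argument, using the vanishing of intermediate homology groups of $\GC_2$. Failing that, I would use an explicit primitive of $\omega^{2m-1}$ on the wheel's star, so that the remaining boundary terms can be checked directly.
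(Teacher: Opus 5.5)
Your Step 3 is where the argument breaks, and it is in tension with your Step 2.

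The detection theorem for $\omega^{4k+1}$ holds because that form is \emph{primitive}. On a face at infinity of the blown-up cell $\sigma_\eta$ attached to a subgraph $\gamma$ with loops, the face is a product $\sigma_\gamma\times\sigma_{\eta/\gamma}$. There $\omega^{4k+1}$ restricts to $\omega_\gamma\otimes 1+1\otimes\omega_{\eta/\gamma}$, and these terms contribute nothing, for dimension reasons or for genus reasons. The form $\omega=\omega^{2m-1}\wedge\omega^{2n-1}$ is not primitive. For a chain $\eta$ with $2m+2n$ edges, Stokes' theorem gives, schematically,
\[
I_{d\eta}(\omega)\pm I_{\delta\eta}(\omega)=\sum_{\gamma}\pm\Big(I_\gamma(\omega^{2m-1})\,I_{\eta/\gamma}(\omega^{2n-1})-I_\gamma(\omega^{2n-1})\,I_{\eta/\gamma}(\omega^{2m-1})\Big).
\]
Here $\delta$ is edge deletion, coming from the faces over the corners of the simplex where the loop number drops by one, and $\gamma$ runs over subgraphs of genus $m$ or $n$. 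Hence integrals of $\omega$ over $d$-boundaries need not vanish, so ``the integral is non-zero, hence $[\Xi]\neq 0$'' is unjustified.

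Moreover, the right-hand side is exactly the product-of-wheel-integrals mechanism you use in Step 2. If you realise the value $I_{W_m}(\omega^{2m-1})I_{W_n}(\omega^{2n-1})$ as such a boundary contribution, the chain carrying it is $d\eta\pm\delta\eta$, whose top-genus part is a $d$-boundary. In the statement, non-triviality is a separate assertion, not a consequence of the integral.

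Step 1 does not produce a cycle either:
\begin{itemize}
\item $W_m\vee W_n$ is closed but has $2m+2n$ edges and genus $m+n$; the two-vertex join has $2m+2n-2$ edges and genus $m+n-1$. Neither has the required $2m+2n-1$ edges.
\item ``Closed modulo lower-genus pieces'' is meaningless, since $d$ preserves the loop number.
\item No vanishing theorem for $H_k(\GC_2)$ in low degrees is available. The statement you are proving is itself a non-vanishing result in low odd degree.
\item Your degree count is too weak. Vanishing of $\omega^{2n-1}$ on matrices of size $<n$ only forces genus $\geq n$, i.e.\ $k_{m,n}\le 2m-1$, not $k_{m,n}<2m-3$.
\end{itemize}

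The paper only quotes this theorem and gives no proof. Its remarks that these integrals are non-explicit and that $k_{m,n}$ cannot be pinned down point to an existence argument, and the identity above supplies one.
\begin{itemize}
\item Take a degree-zero cycle $Z$ of genus $m+n$ on which the product terms do not vanish. This is where the bracket of the two wheel cocycles in $H^0(\GC_2)\cong\mathfrak{grt}$ enters; compare theorem \ref{thm: BrownFreeness}.
\item Then $\delta Z$ is a $d$-cycle of genus $m+n-1$, since $d\delta=-\delta d$, and $I_{\delta Z}(\omega)\neq 0$.
\item If $\delta Z=dY$, apply the identity to $Y$. There are no product terms below genus $m+n$, so $I_{\delta Y}(\omega)=\pm I_{\delta Z}(\omega)\neq0$ one genus lower, and $\delta Y$ is again a $d$-cycle. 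Repeat.
\item Since $\omega^{2n-1}$ vanishes in genus $<n$, this zig-zag must stop at a $d$-cycle that is not a $d$-boundary.
\end{itemize}
The terminal cycle has genus $m+n-r$ and degree $2r-1$: odd, positive, and with $r$ undetermined, exactly as the statement says. This is the missing idea; an explicit join of wheels combined with a detection argument cannot replace it.
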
 
 We do not know how to pin down the  precise degree $k_{m,n}$.
 \end{ex} 
 
 \subsection{Picture} We can now revisit the earlier diagram of graph homology (or rather its dual, graph cohomology) and match non-zero classes with a canonical form whose corresponding integral is non-trivial.  Since graph cohomology has a Lie algebra structure, we may take Lie brackets of such elements to obtain potentially more classes.  See table \ref{table:knownrevisited}.

\begin{table}[h]
\begin{center}
\vspace{0.1in} 
\begin{tabular}{c|ccccccccccc}
$H^8$ &&&&&&&&&& $\0$   \\
$H^7$ &&&&&&&&& $\0$& $\omega^9\wedge \omega^{17}$   \\
$H^6$ &&&&&&&& $\0$ & 0& 0 \\
$H^5$ &&&&&&& $\0$ &  0 & 0& 0 \\
$H^4$ &&&&&&  $\0$ &  0 &  0& 0 &0\\
$H^3$ &&&&& $\0$ & $\omega^5 \wedge \omega^9$   & 0&  $\omega^5 \wedge \omega^{13}$ & $[\omega^5, \omega^5\wedge \omega^9]$ &  $ \substack{ [\omega^5,\omega^{17}] \\  [\omega^9,\omega^{13}] } \}$    \\
$H^2$ &&&& $\0$ & 0&0 & 0 & 0  & 0 & 0 \\
$H^1$ &&& $\0$ & 0 & 0 & 0 & 0 & 0 & 0 &  0 \\
$H^0$ && $\0$ & $\omega^5$  & 0& $\omega^9$ &0  & $\omega^{13}$ &   $[\omega^5,\omega^9] $ & $\omega^{17}$ &  $[\omega^5,\omega^{13}] $ &   $\substack{ \omega^{21} \\  [\omega^5,[\omega^5,\omega^9]]} \}$\\ 
\hline  
$h_G$ & $1$  & $2$ & $3$ & $4$ & $5$ & $6$ & $7$ & $8$& $9$& $10$ & $11$
\end{tabular}
\vspace{0.2in} 
\end{center}
\caption{Elements of graph cohomology $H^k(\GC_2)$  revisited and labelled by a corresponding Lie bracket of canonical forms.
 } 
\label{default}\label{table:knownrevisited} 
\end{table}
 \begin{quest} Write down an explicit differential form on $\LM_8^{\trop}$ which represents the class $[\omega^5,\omega^9]$.  It should correspond to an automorphic differential form on $\GL_8(\ZZ)$ (see next chapter), and its associated canonical integral over the class in $H_0(\GC_2^{(8)})$ should produce an irreducible MZV of weight $8$ (such as  $\zeta(3,5)$). 
 \end{quest}

 \subsection{Comments}
 We have now come full circle and shown how   the seemingly disparate ideas of these four lectures are in fact  closely interconnected. 

We have so far encountered two \emph{a priori} distinct  non-linear representations  of MZVs: on the one hand, Feynman residues, and on the other,  canonical integrals. They appear  to be related in a subtle way.
The  former arise in high-energy physics, where their mathematical interpretation remains far from clear. The latter are related to the cohomology of the even graph complex, and may be viewed as kinds of `volumes' of cells in a moduli space of tropical curves. The Stokes relations on these spaces  seem to relate the two, and indeed  both are  special cases of `generalised Feynman periods'.   
This points towards a geometric interpretation of Feynman periods in quantum field theory that can be formulated entirely in mathematical terms, without reference to the underlying physical theory.

\section{Epilogue: Panorama and questions}

This final section is slightly more advanced, and goes beyond what was covered in my four lectures.
We explain how the constructions introduced above arise naturally in the context of the general linear group and its associated symmetric spaces, thereby linking the entire framework to classical results in arithmetic geometry. Along the way we discuss two more  `non-linear' interpretations for MZV's.

We first discuss the tropical Torelli map  which associates to a metric graph its graph Laplacian, thereby embedding a certain part of  the moduli space of tropical curves into the symmetric space of positive definite matrices modulo the action of $\mathrm{GL}_g(\ZZ)$. 
Under this map, the canonical forms on matrices pull back to the forms constructed earlier on tropical moduli spaces, showing that the latter are manifestations of invariant differential forms on locally symmetric spaces.
From this perspective,  multiple zeta values arise as periods associated to the cohomology of arithmetic groups.
\subsection{Tropical Torelli map, general linear group and algebraic K-theory}

The canonical integrals have a natural geometric interpretation. There is a map, called the tropical Torelli map:
\[ \lambda:  \mathcal{M}_g^{\circ, \trop}  \To P_g/\GL_g(\ZZ)\]
which sends a marked metric graph $G$ to the class ${[}\Lambda_G]$ of the graph Laplacian

The space  $P_g$ is the space of positive-definite symmetric  matrices $X$, where $P\in \GL_g(\ZZ)$ acts via
\[   X \mapsto P^T  X P \ .\]
Thus the equivalence class ${[}\Lambda_G]$  in the quotient $P_g/\GL_g(\ZZ)$ is well-defined\footnote{The tropical Torelli map may be interpreted as the tropicalisation of the map which to a compact Riemann surface  $C$  of genus $g$ associates its Riemann polarisation form. This is the $g \times g$ matrix whose $i,j$th entry is 
\[  \frac{1}{2\pi i}  \int_{C(\CC)} \omega_i \wedge \overline{\omega}_j\] 
where $\omega_1,\ldots, \omega_g$ are a basis for the holomorphic differentials  on $C$. }.

The canonical forms $X\mapsto \omega_X$, for any $\omega \in \Omega_{\mathrm{can}}$  define closed differential forms on $P_g$ which are invariant for the action of $\GL_g(\ZZ)$.  Borel famously computed the stable cohomology of the $\GL_g(\ZZ)$ in the 1970's. His result may be rephrased as saying that the natural map
\begin{equation}\label{stablecohom} \Omega^{\bullet}_{\mathrm{can}}\otimes_{\QQ} \RR \To  \varprojlim_g  H^{\bullet}( P_g/\GL_g(\ZZ); \RR)\end{equation}
is an isomorphism. In other words, the stable cohomology of the general linear group $\GL_g(\ZZ)$ is exactly given by the canonical bi-invariant forms. This builds on important prior work of Matsushima and Garland;  Borel's contribution was to prove the injectivity of \eqref{stablecohom} in very small degrees by studying the asymptotic growth of differential forms as they approach the boundary of $P_g$\footnote{A much stronger version of this result \cite{brown2023bordifications} can be proven using the ideas discussed in these notes}.  An important corollary of Borel's theorem is his computation of the rank of the algebraic $K$-theory of the integers:
\[  K_{n}(\ZZ)\otimes_{\ZZ} \QQ \cong  \begin{cases}   \QQ \quad  \hbox{  if } n\equiv 1 \pmod 4 \hbox{ and } n>1 \\ 
 0 \qquad  \hbox{ otherwise}  \end{cases}\]  
 This result is the lynchpin upon which the entire motivic theory of multiple zeta values depends: it explains the fact that the relations between multiple zeta values are governed by a free Lie algebra in odd degrees $\sigma_3,\sigma_5,\ldots$, which are in one-to-one correspondence  with the forms $\omega^5, \omega^9 ,\ldots $.

 The  canonical forms associated to graphs   are simply the pull-backs of the bi-invariant forms, which naturally live on $P_g/\GL_g(\ZZ)$, by the Torelli map:
 \[ \omega_G =  (\lambda^* \omega)\big|_{\sigma_G}\]
 for any $\omega \in \Omega^n_{\mathrm{can}}.$
 
 \subsection{* Volumes, canonical Voronoi integrals, regulators}
 We briefly discuss the non-linear representations of zeta values  as volumes of locally symmetric spaces and regulators, as mentioned in the  introduction.
 
\subsubsection{Volumes} First, we discuss a classical computation  due to Minkowski, who  calculated the covolume of $\mathrm{GL}_g(\ZZ)$ inside $\mathrm{GL}_g(\RR)$ as a product of consecutive zeta values.  \begin{thm} Let $g>1$ be odd. Then 
 \[ \mathrm{vo}l_g =  \omega^5 \wedge \omega^9 \wedge \ldots \wedge \omega^{2g-1}\]
 is a non-zero $\GL_g(\RR)$-invariant   volume form on $\RR^{\times}_{>0} \backslash P_g$. There exists a non-zero rational number $\alpha_g$ such that the volume 
 \[ \int_{\RR^{\times}_{>0} \backslash P_g/\mathrm{GL}_g(\ZZ)}  \mathrm{vol}_g =  \alpha_g \, \zeta(3) \zeta(5) \ldots \zeta(g)\]
 \end{thm}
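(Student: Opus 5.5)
The proof has two independent halves: first show that $\mathrm{vol}_g$ is a non-zero invariant top form, then compute the total volume by Minkowski's classical covolume formula.

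\emph{Step 1: $\mathrm{vol}_g$ is a non-zero invariant top form.} Write $g=2m+1$. The degrees of the factors are $4k+1$ for $k=1,\ldots,m$, and their sum is
\[ \sum_{k=1}^m (4k+1) = 2m^2+3m = \frac{g(g+1)}{2}-1 , \]
which is exactly $\dim \RR^{\times}_{>0}\backslash P_g$. Each $\omega^{4k+1}_X$ is $\GL_g(\RR)$-invariant by the bi-invariance lemma applied to $X\mapsto A^TXA$, and it descends to the quotient by scalars by projective invariance. Hence $\mathrm{vol}_g$ is an invariant top-degree form on the quotient.

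\emph{Step 2: non-vanishing.} By invariance and transitivity of $\GL_g(\RR)$ on $P_g$, it suffices to check $\mathrm{vol}_g\neq 0$ at $X=I$. There $\omega^{n}$ restricts to $\tr((dX)^n)$ on the tangent space $\mathrm{Sym}_g$, i.e.\ to the primitive invariant forms of the compact dual $\mathrm{SU}(g)/\mathrm{SO}(g)$. Their wedge product is non-zero because $H^\bullet(\mathrm{SU}(g)/\mathrm{SO}(g);\RR)$ is an exterior algebra on classes of degrees $5,9,\ldots,2g-1$ (Cartan). Equivalently, this follows from the injectivity statement in Borel's theorem \eqref{stablecohom}, which holds in this range since $g$ is odd. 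The example for $g=3$ is a useful sanity check: there $\omega^5$ was computed explicitly and has the shape $10\,\Omega/\det(X)^2$.

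\emph{Step 3: the volume.} Since $\mathrm{vol}_g$ is invariant, it is a constant rational multiple of the standard invariant measure, the constant being fixed by evaluating at $I$ against a rational basis of $\mathrm{Sym}_g$. The volume of $\RR^\times_{>0}\backslash P_g/\GL_g(\ZZ)$ for the standard measure is Minkowski's classical result (see also Siegel), which I would derive from the integral
\[ \int_{P_g/\GL_g(\ZZ)} e^{-\tr X}\det(X)^{s}\,dX , \]
unfolded via Epstein zeta functions, or equivalently via Langlands' computation of Tamagawa numbers. The outcome is
\[ \frac{\pi^{\,\cdot}}{\prod\Gamma(\cdot)}\,\prod_{k=2}^{g}\zeta(k) . \]
Next I would use the functional equation, or Euler's formula $\zeta(2j)\in\QQ\pi^{2j}$, to absorb the even zeta values together with the $\Gamma$ and $\pi$ factors. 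The claim is that for $g$ odd all powers of $\pi$ cancel against the normalisation of the $\omega^{4k+1}$, which are rational forms. This leaves $\alpha_g\,\zeta(3)\zeta(5)\cdots\zeta(g)$ with $\alpha_g\in\QQ^\times$.

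The main obstacle is this last transcendental bookkeeping: verifying that the comparison constant between $\mathrm{vol}_g$ and the Riemannian measure, combined with the $\Gamma$- and $\pi$-factors in Minkowski's formula, is exactly rational. I would first check this by direct computation for $g=3$, expecting a rational multiple of $\zeta(3)$. For general odd $g$, I would compare the $\pi$-powers via the volume of the compact dual $\mathrm{SU}(g)/\mathrm{SO}(g)$ (Hirzebruch proportionality), whose volume is a product of $\pi^{k}/\Gamma$-type factors. These factors should match precisely the even-zeta and $\Gamma$ contributions.
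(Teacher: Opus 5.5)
Your route is the paper's: non-vanishing of $\mathrm{vol}_g$ is a pointwise statement about $\GL_g(\RR)$-invariant forms, which is the ``invariant theory'' the paper invokes. You phrase it correctly via Cartan's description of $H^\bullet(\mathrm{SU}(g)/\mathrm{SO}(g);\RR)$ as an exterior algebra on generators of degrees $5,9,\ldots,2g-1$. The value then comes from Minkowski's formula. Two points need fixing, the second of which is a genuine gap.

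First, the aside that non-vanishing ``equivalently'' follows from injectivity in Borel's theorem is false. $\mathrm{vol}_g$ has degree $g(g+1)/2-1$, which exceeds the virtual cohomological dimension $g(g-1)/2$ of $\GL_g(\ZZ)$. Its cohomology class on $\RR^\times_{>0}\backslash P_g/\GL_g(\ZZ)$ is therefore zero, and it lies far outside the stable range. Only the compact-dual argument detects it.

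Second, you leave open the step you yourself flag as the main obstacle, and the mechanism you propose contradicts your own Step 3. Evaluating at $X=I$ gives $\mathrm{vol}_g\wedge d\log\det X=c_g\,\det(X)^{-(g+1)/2}\prod_{i\le j}dX_{ij}$ with $c_g\in\QQ^\times$. So there is no transcendental normalisation in the $\omega^{4k+1}$ for powers of $\pi$ to cancel against; they must cancel inside Minkowski's formula itself.

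They do, once the formula is written the right way round. For the rational measure $\prod_{i\le j}dX_{ij}$ the covolume is $r_g\prod_{k=2}^{g}\pi^{-k/2}\Gamma(k/2)\zeta(k)$ with $r_g\in\QQ^\times$. This is Siegel's $\prod_k\zeta(k)$ divided by $\mathrm{vol}\,\mathrm{O}(g)$, which contributes $\prod_k\pi^{k/2}/\Gamma(k/2)$ up to $\QQ^\times$. So the $\Gamma$'s sit in the numerator, not in the denominator as you wrote.

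Now pair $k=2j$ with $k=2j+1$. The first gives $\pi^{-j}\Gamma(j)\zeta(2j)\in\QQ^\times\pi^{j}$, and the second gives $\pi^{-j-1/2}\Gamma(j+\tfrac12)\zeta(2j+1)\in\QQ^\times\pi^{-j}\zeta(2j+1)$. Each pair therefore contributes $\QQ^\times\zeta(2j+1)$. Since $g$ is odd, $\{2,\ldots,g\}$ splits exactly into such pairs, giving $\alpha_g\,\zeta(3)\zeta(5)\cdots\zeta(g)$ with $\alpha_g\in\QQ^\times$ (non-zero by your Step 2). For $g=3$ this reads $\pi^{-1}\zeta(2)\cdot\pi^{-3/2}\Gamma(3/2)\zeta(3)=\zeta(3)/12$.

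No Hirzebruch proportionality or comparison with the compact dual's volume is needed. This pairing is precisely where the parity of $g$ enters the value.
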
 
 
 The first part of this theorem uses results on invariant theory to show that the exterior product of the canonical forms $\omega^5 \wedge \ldots \wedge \omega^{2g-1}$ is indeed non-zero. The second part can be deduced  from Minkowski's theorem.   In particular, we deduce yet another non-linear integral representation for $\zeta(3)$:
 \[ \int_{\RR^{\times}_{>0} \backslash P_3/\mathrm{GL}_3(\ZZ)}  \omega^5 =  \alpha_3 \, \zeta(3) \ .\]
 It turns out to be completely equivalent to the canonical integral $I_{W_3}(\omega^5)$ and the Feynman residue $I^{\mathrm{res}}_{W_3}$. 
  The volume computation of Minkowski was generalised by Siegel to all number fields.

\subsubsection{Vorono\"i decomposition} Much of the story we have discussed for the moduli space of tropical curves can be adapted to the setting of the locally symmetric space  $\RR^{\times}_{>0} \backslash P_g/\GL_g(\ZZ)$ for the general linear group. It too admits a cellular decomposition, due to Vorono\"i, which is analogous to the decomposition of  $\LM_g^{\trop}$ into simplices $\sigma_G$. 
 
 We briefly discuss some key ideas  and refer to the excellent paper \cite{TopWeightAg} for further details.
 
 \begin{defn} Vorono\"i's construction of a cellular decomposition of $P_g$ is as follows. Each cell is indexed by a positive definite quadratic form 
 \[ Q(x,y)  =  \sum_{1\leq i,j\leq g} a_{ij} x_i y_j  \]
 where $x, y\in \RR^g$ are column vectors of length $g$. It may be written $Q(x,y) = x^T A y$ where $A=(a_{ij})_{ij}$ is a positive definite symmetric matrix (i.e., 
 one has $Q(x,x) \geq 0$ with equality if and only if $x=0$).  To any such $Q$ one writes down the set of minimal vectors
 \[ M_{Q} = \{ \xi \in \ZZ^g \setminus \{0\}:  Q(\xi, \xi) \leq  Q(z, z) \quad  \hbox{ for all } z \in \ZZ^g\setminus \{0\} \}\]
 which is a finite set of vectors in $\ZZ^g$. The cell associated to $Q$ is then the convex hull 
 \[ C_{Q} =  \RR_{>0}  \langle \xi \xi^T : \xi \in M_{Q} \rangle\]
 where $\xi \xi^T$ are the symmetric rank 1 matrices whose $i,j$th entries are $\xi_i\xi_j$, where $\xi_i$ are the coordinates of a given minimal vector $\xi\in M_Q$.
 We denote by 
 \[ \sigma_Q = \RR^{\times}_{>0} \backslash C_Q\]
 the corresponding quotient by the multiplicative scalar action of $\RR_{>0}$. 
  \end{defn} 
 \begin{ex} Consider the quadratic form for $g=2$ given by $Q(x,x) = x^2_1+2x_1x_2 + x_2^2$. 
 It has six    minimal vectors 
 \[  M_{Q} =  \{ (x_1,x_2) =   ( \pm 1, 0)  ,   \pm (1,-1)  ,  (0, \pm 1) \} \ .\]
 The corresponding symmetric matrices $\xi\xi^T$ for $\xi \in M_{Q}$ are of the form
 \[  \left\{  \begin{pmatrix}  1 & 0 \\ 0 & 0 \end{pmatrix} \quad , \quad  \begin{pmatrix} 1 & -1 \\ -1 & 1 \end{pmatrix}  \quad , \quad    \begin{pmatrix}  0 & 0 \\ 0 & 1 \end{pmatrix}
  \right\}\ . \]
 Their convex hull consists of linear combinations of these three matrices with positive real coefficients:
\[ C_{Q} = \left\{  \begin{pmatrix}  \lambda_1+\lambda_2 & -\lambda_2 \\  -\lambda_{2}  & \lambda_1+\lambda_3 \end{pmatrix} : \hbox{ for } \lambda_1,\lambda_2,\lambda_3>0    \right\} \] 
The above matrix is nothing other than the graph Laplacian for the sunrise graph 
\eqref{LambdaSunrise}.

 \end{ex}

The set of cells $\sigma_Q$ as $Q$ ranges over all positive definite quadratic forms $Q$ decomposes the space $\RR^{\times}_{>0} \setminus P_g$ into polyhedral cells with the property that any distinct two such cells are either disjoint, or meet in a common face. Furthermore, this decomposition is equivariant with respect to the action of $\GL_g(\ZZ)$ and thus, one may show, gives  a finite cell decomposition of  $\RR^{\times}_{>0} \setminus P_g/\GL_g(\ZZ)$. 

One can prove \cite{AlexeevBrunyate} that the tropical Torelli map sends cells $\sigma_G$ onto cells of the form   $\sigma_Q$.
%More precisely, to every cell $\sigma_G$ in $\LM_g^{\trop}$ there exists a quadratic form $Q_{G}$ such that $\lambda: \sigma_G \rightarrow \sigma_{Q_G}.$ 
The following theorem is a generalisation of theorem \ref{thm: CanIntisfinite}.

\begin{thm}  \cite{brown2023bordifications} \label{thm: IQomegafinite} Let $\omega\in \Omega^{d}_\mathrm{can}$ and let $\sigma_Q$ be a Vorono\"i  cell of dimension $d-1$. Then  the `canonical Vorono\"i' integral  
\[ I_Q(\omega) = \int_{\RR_{>0}^{\times} \setminus  \sigma_Q} \omega\]
is finite. 
\end{thm}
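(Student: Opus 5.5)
The plan is to reduce the finiteness of $I_Q(\omega)$ to a local statement near the boundary of the closed cell. I will show that $\omega$ pulled back to a suitable compactification of $\sigma_Q$ extends smoothly, or at worst to an integrable form, along every boundary face. Concretely, $\overline{C}_Q$ is the closed polyhedral cone spanned by the rank one matrices $\xi\xi^T$, $\xi\in M_Q$. Its image $\overline{\sigma}_Q$ in projective space is a compact polytope. The integrand $\omega_X$ lies in $\QQ[X_{ij},dX_{ij}][\det(X)^{-1}]$ and is projectively invariant, since $\omega^n_{\lambda X}=\omega^n_X$ for $n>1$. So it defines a smooth form on the open part of $\overline{\sigma}_Q$ where $\det X>0$. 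The only possible divergences occur on the faces of $\overline{\sigma}_Q$ where $X$ becomes degenerate, namely the faces spanned by vectors $\xi$ that do not span $\RR^g$. These are exactly the analogues of the faces of positive weight in $\LM_g^{\trop}$ from the lemma on the vanishing of $\Psi_G$.

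The first step is to blow up these degenerate faces, as in the proof of Theorem~\ref{thm: CanIntisfinite}. Let $W\subset \RR^g$ be a rational subspace spanned by a subset of $M_Q$. The locus where the cone degenerates to matrices supported on $W$ is resolved by an iterated blow-up along the linear subspaces $L_W$ of matrices whose image lies in $W$, taken in order of increasing dimension. This produces a compact manifold with corners $\widetilde{\sigma}_Q$, a version of the Borel--Serre bordification restricted to the cell. Near the exceptional divisor $E_W$ one chooses a basis adapted to the flag $W\subset\RR^g$. By bi-invariance (the lemma above), $X$ may be replaced by $P^TXP$ with $P\in\GL_g(\ZZ)$ adapted to $W$, which puts $X$ in block form
\[ X=\begin{pmatrix} A & B\\ B^T & D\end{pmatrix}, \]
where $D$ (the block complementary to $W$) tends to zero with a blow-up parameter $t$, so that $D=tD'$.

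The key step is a factorisation property. Writing $X$ via the Schur complement as $X=\begin{pmatrix}1&0\\ B^TA^{-1}&1\end{pmatrix}\begin{pmatrix}A&0\\0&t D''\end{pmatrix}\begin{pmatrix}1&A^{-1}B\\0&1\end{pmatrix}$, one checks that $X^{-1}dX$ is conjugate to a block upper triangular form up to terms that are regular in $t$. The diagonal blocks are $A^{-1}dA$ and $(tD'')^{-1}d(tD'')=D''^{-1}dD''+dt/t$.

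The trace of a power $(X^{-1}dX)^n$ then has the following shape. The terms containing $dt/t$ occur only through $\omega^1$-type pieces, and since $dt/t$ is a scalar, its contributions to $\tr((X^{-1}dX)^n)$ for $n$ odd and greater than $1$ cancel. This cancellation is where projective invariance is used. It follows that the primitive forms $\omega^{4k+1}$, and hence all of $\Omega_{\mathrm{can}}$ by taking wedge products, restrict on $E_W$ to
\[ \sum \omega_A\wedge\omega_{D''}, \]
the canonical forms of the two blocks, which are smooth. Thus $\omega$ extends smoothly to $\widetilde{\sigma}_Q$. Since $\widetilde{\sigma}_Q$ is compact, the integral is finite.

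The main obstacle is making this factorisation rigorous. One must check that the off-diagonal block contributions involving $A^{-1}B$ and their differentials stay bounded, and that no negative powers of $t$ survive in the numerator. The difficulty is that, a priori, $\omega^n$ has $\det(X)^n$ in its denominator. My first attempt would be to prove the coproduct-type formula $\omega^{n}_X|_{t=0}=\omega^n_A+\omega^n_{D''}$ by induction on the nesting of flags. At each stage I would use only the cyclicity of the trace and the bi-invariance lemma, and check that the iterated blow-ups commute with restriction to faces so that the argument applies at the corners.
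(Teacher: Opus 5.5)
Your architecture matches the strategy of the cited reference \cite{brown2023bordifications}; the paper itself only quotes the result. That strategy is: blow up the linear spans of the degenerate faces of $\overline{\sigma}_Q$ in increasing dimension, show every $\omega\in\Omega_{\mathrm{can}}$ extends smoothly across the exceptional divisors with restriction $\omega_A+\omega_{D''}$, and conclude by compactness. But there is a genuine gap at exactly the step you flag, and your proposed mechanism for it is not right.

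First, in $W$-adapted coordinates near $E_W$ one has $X=X_W+tY$ with $X_W\in\mathrm{Sym}^2W$, so
\[ X=\begin{pmatrix} A_0+tA' & tB_1\\ tB_1^T & tD'\end{pmatrix}. \]
That is, the off-diagonal block also vanishes along $E_W$. You never state or use this, although your displayed factorisation, with Schur complement of the form $tD''$, already requires it.

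Second, even with $B=tB_1$, conjugating $X^{-1}dX$ by your unipotent factor does not give ``block upper triangular plus regular''. The lower-left block is
\[ (tD'')^{-1}d(B^TA^{-1})A=\tfrac{dt}{t}D''^{-1}B_1^T+(\text{regular}). \]
Projective invariance of the block $tD''$ only disposes of the block-diagonal words in $\tr\big((X^{-1}dX)^n\big)$. Every word that leaves a diagonal block uses both off-diagonal blocks. Such words carry a $\frac{dt}{t}$ that is compensated only because the upper-right block $d(A^{-1}B)=d(tA^{-1}B_1)$ is $O(t)$ modulo $dt$. So nothing ``cancels'': the pole is killed by a first-order vanishing, and that is precisely the estimate you leave open.

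A cleaner way to close the gap is to write $X=\mathrm{diag}(1,t)\,\tilde X$ with $\tilde X=\begin{pmatrix}A_0+tA'&tB_1\\ B_1^T&D'\end{pmatrix}$. This $\tilde X$ is regular and invertible at generic points of $E_W\cap\widetilde{\sigma}_Q$: there $A_0$ and $D'$ are positive definite on $W$ and $\RR^g/W$, since the minimal vectors in $W$ span $W$ and the others span $\RR^g/W$. Then $X^{-1}dX$ is conjugate to $R+\frac{dt}{t}\pi'$, with $R=d\tilde X\,\tilde X^{-1}$ regular and $\pi'$ the constant projection onto the complement of $W$. Graded cyclicity gives
\[ \omega^n_X=\tr(R^n)+n\,\frac{dt}{t}\wedge\tr\big(\pi'R^{n-1}\big). \]
The coefficient of $\frac{dt}{t}$ is not identically zero. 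However, modulo $dt$ at $t=0$, $R$ is block lower triangular with lower diagonal block $dD'\,D'^{-1}$. So the coefficient restricts to $\tr\big((dD'\,D'^{-1})^{n-1}\big)=0$, because $n-1$ is even. Hence it is divisible by $t$, and restricting to $t=0$ yields $\omega^n_{A_0}+\omega^n_{D'}$.

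Corners need an argument too; either of the following works:
\begin{itemize}
\item Repeat the computation with $\mathrm{diag}(1,t_1,t_1t_2,\dots)$ along a flag $W_1\subset W_2\subset\cdots$. Along each $t_i=0$ the matrix is block triangular for the splitting at $W_i$, $dt_i/t_i$ acts as a scalar on the complementary block, and its coefficient is again the trace of an even power.
\item Use that the polar locus of a rational form on a smooth variety is a divisor. It then suffices to check generic points of each $E_W$, together with the fact that the strict transform of $\{\det X=0\}$ misses $\widetilde{\sigma}_Q$. That fact itself needs the flag factorisation.
\end{itemize}
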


   Let $g>1$ be odd. Recall that $\mathrm{vol}_g = \omega^5 \wedge \ldots \wedge  \omega^{2g-1}$ is a  volume form. 
By decomposing the fundamental domain into top-dimensional Vorono\"i cells $\sigma_{Q_1},\ldots, \sigma_{Q_N}$ we deduce that 
\[   \alpha_g \zeta(3) \zeta(5) \ldots \zeta(g)  = \int_{\RR^{\times}_{>0} \backslash  P_g/\GL_g(\ZZ)} \mathrm{vol}_g = \sum_{i=1}^N  I_{Q_i} (\mathrm{vol}_g)\]
where the left-hand equality follows from Minkowski's theorem. Thus the product of zeta values may be broken up as a sum of volumes of Vorono\"i cells. 

\begin{quest}
    What kinds of numbers can appear as volumes  $I_{Q_i}(\mathrm{vol}_g)$?
\end{quest}

\begin{ex} For $g=3$, the space $\RR^{\times}_{>0} \backslash  P_3/\GL_3(\ZZ)$ has a single cone in its Vorono\"i decomposition, which is the image of the wheel $W_3$.
Thus we retrieve, once more, the fact that the wheel canonical integral is proportional to $\zeta(3)$.

The space $\RR^{\times}_{>0} \backslash  P_5/\GL_5(\ZZ)$ has exactly 3 cones in its Vorono\"i decomposition.  See \cite{TopWeightAg, ElbazVincentGanglSoule} for a precise description. One of them  is the image of the cell $\sigma_{K_6}$ corresponding to the complete graph with $6$ vertices.  The fact that its canonical integral involves a non-irreducible multiple zeta value of weight 8 (namely $\zeta(3,5)$, see \eqref{IK6})  gives an arithmetic way to see that it cannot be the only such cell. 

What are the volumes of the two other cones? 
\end{ex}

 \subsubsection{Regulators} For all $k\geq 1$,  Borel defined the  regulator
 \[ r_{4k+1} : K_{4k+1}(\ZZ) \otimes_{\ZZ} \QQ \To \CC\] 
and proved that its image is the line $\QQ \zeta(2k+1)$. It follows from our earlier interpretation of the stable cohomology of the general linear group that it is computed by the integral of the canonical form $\omega^{4k+1}$ over a certain locally finite homology class for $\GL_g(\ZZ)$ where $g$ is sufficiently large (in the stable range). In particular, it is a linear combination of integrals of the form described   in theorem \ref{thm: IQomegafinite}.  This computation is  not only the origin of the non-triviality of the `motivic' Soule elements $\sigma_{2k+1}$, but also the reason for the appearance of  odd zeta values in the theory of mixed Tate motives.

 \begin{remark}A subtle fact  is that regulators are periods of the \emph{stable} cohomology of $\GL_g(\ZZ)$, and not immediately related to the wheel classes, which are dual to \emph{compactly supported} cohomology classes, and which in any case lie outside the stable range. There is a subtle way to relate the  two which is explained in \cite{BrownSchnetzoneformmatrices} \S2.4. For example, the first regulator map on $K_5$ may be shown to be the quotient of a volume integral by a wheel integral
 \[  \mathrm{Im} (r_5) =  \frac{ \mathrm{vol}(  \RR^{\times}_{>0} \backslash  P_7/\GL_7(\ZZ))}{I_{W_5}} \QQ =\frac{\zeta(3) \zeta(5)}{\zeta(5)} \QQ = \zeta(3)  \QQ \]

 \end{remark}

 \subsection{Concluding remarks and  questions}

There are two main sources for the appearance of multiple zeta values and mixed Tate motives in mathematics and physics. The first arises from linear geometry (relating to the fact that  projective spaces have  cohomology of Tate type) and appears in the context of hyperplane configurations,  and in particular   moduli spaces of curves of genus~$0$ with $n$ marked points $\mathcal{M}_{0,n}$. At the core of this theory lies the unipotent  fundamental group of the projective line minus three points, $\mathbb{P}^1\setminus \{0,1,\infty\}$~\cite{deligneP1}.

The second arises from a `non-linear' geometry associated with  singular determinant loci. As we have explained, these arise in the context of Feynman integrals in quantum field theory, as canonical integrals on   moduli spaces of tropical curves, and  on the locally symmetric spaces of the general linear group $\GL_n(\ZZ)$.  The latter are closely related  to the theory of regulators in  algebraic $K$-theory. We expect that all multiple zeta values (or certainly a generating family modulo $\zeta(2)$) naturally arise as  integrals on these spaces, and that, in a suitable sense, the category of mixed Tate motives over $\ZZ$ can be generated geometrically   from a cell decomposition of the locally symmetric spaces  for   $\GL_n(\ZZ)$.
 
The examples discussed in these notes suggest that multiple zeta values are governed by a common geometric framework based on determinantal structures, which encompasses both the classical linear theory and the non-linear constructions arising in quantum field theory and arithmetic geometry. A central problem is to make this framework explicit, by identifying a direct geometric relationship between the linear and non-linear representations and thereby clarifying the unity underlying these apparently different constructions.

\renewcommand\refname{References}

\bibliographystyle{alpha}

\bibliography{biblio}

\end{document}